\definecolor{armygreen}{rgb}{.0, 0.5, 0.0}
\newcommand{\BBB}{\color{black}} 
\newcommand{\EEE}{\color{black}}
\newcommand{\GGG}{} 
\theoremstyle{plain}
\newtheorem{theorem}{Theorem}[section]
\newtheorem{lemma}[theorem]{Lemma}
\newtheorem{proposition}[theorem]{Proposition}
\newtheorem{corollary}[theorem]{Corollary}
\theoremstyle{definition}
\newtheorem{definition}[theorem]{Definition}
\newtheorem{remark}[theorem]{Remark}
\newenvironment{step}[1]{\underline{Step #1}.}{}
\begin{document}

\author[M.~Cicalese]{Marco Cicalese}
\address[Marco Cicalese]{Zentrum Mathematik - M7, Technische Universit\"at M\"unchen, Garching, Germany}
\email{cicalese@ma.tum.de}

\author{Leonard Kreutz}
\address[Leonard Kreutz]{Applied Mathematics M\"unster,
WWU M\"unster, 48149 M\"unster, Germany}
\email{lkreutz@uni-muenster.de}

\author[G.P.~Leonardi]{Gian Paolo Leonardi}
\address[G.P.~Leonardi]{Department of Mathematics, University of Trento, 38050 Povo (TN), Italy}
\email{gianpaolo.leonardi@unitn.it}

\title{Emergence of Wulff-Crystals from atomistic systems on the FCC and HCP lattices}
\begin{abstract} We consider a system of $N$ hard spheres sitting on the nodes of either the $\mathrm{FCC}$ or $\mathrm{HCP}$ lattice and interacting via a sticky-disk potential. As $N$ tends to infinity (continuum limit), assuming the interaction energy does not exceed that of the ground-state by more than $N^{2/3}$ (surface scaling), we obtain the variational coarse grained model by $\Gamma$-convergence. More precisely, we prove that the continuum limit energies are of perimeter type and we compute explicitly their Wulff shapes. Our analysis shows that crystallization on $\mathrm{FCC}$ is preferred to that on $\mathrm{HCP}$ for $N$ large enough. The method is based on integral representation and concentration-compactness results that we prove for general periodic lattices in any dimension.
\end{abstract}

\keywords{Wulff shape, isoperimetric inequality, crystallization, $\Gamma$-convergence, lattice systems, concentration-compactness}
\maketitle
\section{Introduction}

A fundamental problem in crystallography is to understand why ensembles of large number of atoms arrange themselves into crystals at low temperatures. From the mathematical point of view, proving that equilibrium configurations of certain phenomenological interaction energies exhibit these structures is referred to as the crystallization problem \cite{Blanc}.

At zero temperature the internal energy of a configuration of atoms is expected to be solely governed by its geometric arrangement. \BBB Within \EEE the framework of molecular mechanics \cite{Molecular,Friesecke-Theil15,Lewars}, \BBB one identifies \EEE each ensemble of atoms with its atomic positions $X=\{x_1,\ldots,x_N\} \subset \mathbb{R}^3$ and \BBB associates \EEE to it a configurational energy of the form
\begin{align*}
\mathcal{E}(X) := \frac{1}{2}\sum_{i\neq j} V(|x_i-x_j|)\,,
\end{align*}
where $V \colon \mathbb{R} \to \mathbb{R}\cup \{+\infty\}$ is an empirical pair interaction potential (the factor $\frac{1}{2}$ accounts for double counting). Such potentials are typically repulsive at short distances and attractive at large distances. While clustering is favored by long range attraction, the density of a cluster cannot get too large due to short-range repulsion.

Notably, even under simplifying assumptions on the interaction potentials, the mathematical literature on rigorous crystallization results is scarce. In fact, for finite $N$, only results in one and two space dimensions are available. For example, if $V$ is of Lennard--Jones type, crystallization has been proved  only in one space dimension \cite{GardnerRadin:79}. In higher space dimensions only partial results are available. Most notably, in \cite{ELi:09,FlatleyTheil:15,Theil:06} it has been proven that crystalline structures have optimal bulk energy scaling.  In two dimensions, only results for (some variants of) the sticky disc potential (see Fig.~\ref{fig:potential})
\begin{align}\label{def:PHR}
V(r):= \begin{cases} 
+\infty &\text{if } r <1\,,\\
-1 &\text{if } r=1\,,\\
0 &\text{otherwise}
\end{cases}
\end{align}
 are available \cite{Lucia,Harborth:74,HeitmannRadin,Radin:81}. 
\begin{figure}
\begin{tikzpicture}
\draw[->](0,-1.5)--++(0,4.5) node[anchor= east] {$V(r)$};
\draw[->](-1,0)--++(5,0) node[anchor =north] {$r$};
\draw[decorate, decoration={snake,amplitude=.4mm,segment length=1.5mm}] (0,2) node[anchor =east]{$+\infty$}--++(1,0);
\draw[dashed,thin](1,-1) --++(0,1) node[anchor =north east] {$ 1 $}--++(0,2);
\draw[fill=black](0,-1) node[anchor = east] {$-1$}++(1,0) circle(.025);
\draw[thick](1,0)--++(2.4,0);
\draw[thick](1,-1)circle(.025);
\end{tikzpicture}
\caption{The sticky disc interaction potential $V$.}
\label{fig:potential}
\end{figure}
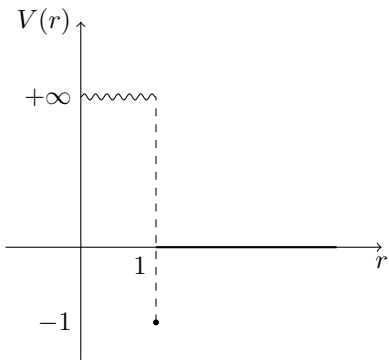
More recently, crystallization results have been proved for ionic compounds \cite{FriedrichKreutz,FriedrichKreutz2} and carbon structures in \cite{Mainini}. The potential given in \eqref{def:PHR} models the atoms as hard spheres that interact exactly when two of them are tangent.  In $\mathbb{R}^n$ the kissing number $k(n)$ is the highest number of $n$-dimensional spheres of radius $\frac{1}{2}$ which are tangent to a given sphere of the same size. It is well known that $k(2)=6$ and $k(3)=12$, see \cite{SchVan}. For a given configuration of non-overlapping equal balls centered at $X=\{x_1,\ldots,x_N\} \subset \mathbb{R}^n$, $N \in \mathbb{N}\cup\{+\infty\}$ the coordination number of $x \in X$ is the number of spheres centered at $y\in X\setminus \{x\}$ and tangent to the one centered at $x$. In two dimensions there is a unique (up to a rigid motion) configuration made of infinitely many particles such that all atoms have as coordination number the kissing number. Such a set $X$ is the triangular lattice with lattice spacing one. In three dimensions the problem is much more intricate. In fact, there exist infinitely many configurations with constant coordination number equal to $k(3)$. An infinite class of configurations can be obtained by stacking in an appropriate way layers of triangular lattices. A remarkable result by Hales \cite{Hales} shows that all such structures solve Kepler's conjecture which is to say that they have the maximal packing density in $\mathbb{R}^3$. Two notable cases of the aforementioned structures are the face-centered cubic lattice $\mathcal{L}_{\mathrm{FCC}}$ and the hexagonal closed-packed lattice $\mathcal{L}_{\mathrm{HCP}}$ (see \eqref{def:vectors}--\eqref{def: HCP} for their precise definition) which are the most prevalent among the crystalline arrangements in the periodic table of elements.

 In this paper we want to investigate already crystallized configurations, i.e.~configurations $X \subset \mathcal{L}$ where $\mathcal{L}=\mathcal{L}_{\mathrm{FCC}}$ or $ \mathcal{L}=\mathcal{L}_{\mathrm{HCP}}$. For such $X = \{x_1,\ldots,x_N\} \subset \mathcal{L}$,  fixing the lattice spacing to be $1$, we have 
\begin{align*}
\mathcal{E}(X)=\frac{1}{2}\sum_{i\neq j} V(|x_i-x_j|) = -\sum_{i=1}^N\#(\mathcal{N}(x_i)\cap X)\,, \text{ where } \mathcal{N}(x)= \{y \in \mathcal{L} \colon |x-y|=1\}\,.
\end{align*}
As described above the minimal energy per atom is $-k(3)=-12$. Further information on $\mathcal{E}$ as $N$ grows can be obtained by referring it to the minimal energy per atom and calculating the  {\it excess energy} \BBB $E_N(X)$ defined below\EEE. More precisely, in Theorem \ref{theorem:Gammaconvergence}, we carry out a rigorous variational asymptotic expansion (see \cite{BraTru}) of $\mathcal{E}(X)$, by considering
\begin{align}\label{introeq:rescaled energy}
E_N(X) =  N^{-2/3}\left( \mathcal{E}(X)+12N\right) = N^{-2/3}\sum_{i=1}^N(12-\#(\mathcal{N}(x)\cap X))\,
\end{align}
and calculating its $\Gamma$-limit \cite{Braides:02,DalMaso} as $N$ tends to infinity. This analysis has been done in two dimensions for configurations confined to the triangular lattice \cite{Yuen} as well as without any confinement assumption \cite{FriedrichKreutzSchmidt}. Note that, the scaling factor $N^{-2/3}$ is used in order to keep the energy bounded as the number of atoms grows. In fact, given a low energy configuration of $N$ atoms, the number of those contributing to the energy scales like $N^{2/3}$ for $N$ large. By associating to each configuration its rescaled emprical measure 
\begin{align*}
\mu_N(X) :=\BBB \frac{1}{N}\sum_{i=1}^N \delta_{N^{-1/3}x_i}\EEE\,,
\end{align*}
we show in Theorem \ref{theorem:Gammaconvergence} i) that the sequence of rescaled energies \eqref{introeq:rescaled energy} is equi-coercive with respect to the weak*-convergence of the associated empirical measures. In Theorem \ref{theorem:Gammaconvergence} ii), iii) we exploit integral representation theorems \cite{ABCS,AliCicRuf,AlicandroGelli} to show that the limit energy is finite on the set of measures $\mu = \sqrt{2}\mathcal{L}^3\llcorner V$, where $V \subset \mathbb{R}^3$ is a set of finite perimeter, on which the energy takes the form
\begin{align}\label{def:limitenergy}
E_\mathcal{L}(\mu) := \int_{\partial^* V} \varphi_\mathcal{L}(\nu)\,\mathrm{d}\mathcal{H}^2 \,.
\end{align}
 Here, $\partial^* V$ denotes the reduced boundary of the set $V$, $\nu(x)$ denotes its unit outer normal at the point $x\in \partial^*V$ and $\varphi_{\mathcal{L}}$ is an anisotropic surface energy density depending on the underlying lattice $\mathcal{L}$.  \GGG In the case of multi-lattices, like the HCP-lattice, this integral representation result has not yet been proven in the literature. We defer to Section~\ref{appendix:general} for a proof of this result  whose main ingredient is the integral representation theorem in \cite{AliCicRuf}. Furthermore, in the same section we prove general compactness and concentration lemmata that ensure the convergence of the rescaled empircal measures of minimizers of the discrete problem \eqref{introeq:rescaled energy} to the Wulff shape (up to a constant density factor) of the associated limiting anisotropic perimeter energy \eqref{def:limitenergy}. Such kind of result was previously known only in two dimensions \cite{Yuen}. Its extension to higher dimensions, see Lemma \ref{lemma:concentration}, requires  more refined tools from geometric measure theory that, to the best of our knowledge, are exploited in this setting here for the first time. \EEE
The main body of this work lies in the calculation of the surface energy density $\varphi_{\mathcal{L}} \colon \mathbb{R}^3 \to [0,+\infty)$ both for the $\mathrm{FCC}$ and the $\mathrm{HCP}$ lattices. Here, we take advantage of a recently proved finite cell formula \cite{ChaKre}. Finally, for both lattices, we solve the associated isoperimetric problem \BBB\cite{FonsecaMueller} \EEE
\begin{align}\label{introeq:isoperimetric}
m_{\mathcal{L}}:=\min\left\{\int_{\partial^* V} \varphi_{\mathcal{L}}(\nu)\,\mathrm{d}\mathcal{H}^2 \colon |V|=1\right\}\,
\end{align}
by calculating the (up to translation unique) set realizing the minimum in \eqref{introeq:isoperimetric}, also known as the Wulff shape \BBB\cite{Wulff}\EEE. We show that $m_{\mathrm{FCC}} < m_{\mathrm{HCP}}$ which also implies (since $\Gamma$-convergence and coercivity implies the convergence of minimum values) that, for large number of atoms, crystallization on the face-centered cubic lattice  is preferred to that on  the hexagonal-closed packed lattice. \GGG We finally mention \cite{Yuen-thesis} for some preliminary computations on the Wulff shape of the FCC and HCP. \EEE

\GGG In contrast to the uniqueness of the Wulff crystal in the continuum setting, minimizers to the discrete isoperimetric problem \cite{Har} are non-unique \cite{Lucia2}. Over the last years there has been a remarkable interest in establishing fluctuation estimates between different minimizers, i.e., estimating (several notions of) distances between different minimizers. Maximal fluctuation estimates between two minimizers have been
first conjectured in \cite{Yuen} in the case of the crystallization on the triangular lattice and have been later proved
in \cite{Schmidt} and \cite{DPS2}. The same estimates have been proved in \cite{Mainini-Piovano,FriedrichKreutz} and \cite{DPS,FriedrichKreutz2} for the square and the honeycomb lattices, respectively. A general approach linking the quantitative anistropic isoperimetric inequality to such fluctuation estimates has been set up in \cite{cicaleseleonardi} by two of the authors. In dimensions larger than two these fluctuation estimates have been only established for the cubic lattice in \cite{MaiPioSteSch} and for $\mathbb{Z}^d$ in \cite{MaininiSchmidt:20}. In order to establish the aforementioned fluctuation estimates, however, an understanding of the limiting macroscopic Wulff shape is essential. Since the present work yields these shapes for the FCC and HCP lattices, it is our opinion that it may be considered an indispensable first step to prove fluctuation estimates also for such lattices.   \EEE

The article is structured as follows. In Section \ref{sec:Notation}, we introduce the necessary mathematical preliminaries, the model, and the main results. In Section \ref{sec:proofs} we prove Proposition \ref{prop:main1} and \ref{prop:main2}, by calculating the surface energy density as well as the Wulff crystal associated to both the $\mathrm{FCC}$ and the $\mathrm{HCP}$ lattices. \GGG In Section \ref{sec:Gammaconvergence} we prove the main $\Gamma$-convergence Theorem \ref{theorem:Gammaconvergence}. The latter is a consequence of a more general theory for discrete perimeter energies on general periodic lattices developed in Section~\ref{appendix:general}. \EEE

\section{Setting and Notation}\label{sec:Notation}

\GGG Given a set of vectors $V\subset \mathbb{R}^n$ we denote by $\mathrm{span}_\mathbb{Z}V$ the set of finite linear combinations of elements of $V$ with coefficients in $\mathbb{Z}$. We denote by $\mathfrak{M}$ the collection of all Lebesgue measurable subsets of $\mathbb{R}^n$. Given $A\in \mathfrak{M}$ we denote by $|V|$ its $n$-dimensional Lebesgue measure, i.e., $|V|=\mathcal{L}^n(V)$, and $\mathcal{H}^n$ its $n$-dimensional Hausdorff measure. Given a countable set $X$, we denote by $\#X$
the cardinality of $X$. Given $a,b \in \mathbb{R}^n$ we denote by $\langle a,b\rangle$ their scalar product. We denote by $\mathbb{S}^{n-1}$ the set of unitary vectors in $\mathbb{R}^n$. For any $\nu \in \mathbb{S}^{n-1}$ let $\{\nu_1,\ldots,\nu_n=\nu\}$ be an orthonormal basis of $\mathbb{R}^n$ , and let $Q^\nu := \{x \in \mathbb{R}^n \colon
|\langle x, \nu_i\rangle| < 1/2,\, i = 1,\ldots,n\}$ be a unit cube centered at the origin with faces parallel and orthogonal to $\nu$. For $T>0$ and $x\in \mathbb{R}^n$ we set  $Q_T^\nu(x)= x+TQ^\nu$ and we write $Q_T^\nu=Q_T^\nu(0)$. For $r>0$ and $x\in \mathbb{R}^n$ we denote by $B_r(x)$ the $n$-dimensional Euclidean ball of radius $r$ centered at $x$ (for $x=0$ we write $B_r$ in place of $B_r(0)$) and we set $\omega_n=|B_1(x)|$. For $r>0$ and $A \subset \mathbb{R}^n$ we set $(A)_r=B_r +A$. Given $A \subset \mathbb{R}^n$ open,  we define the set of positive Radon measures by $\mathcal{M}_+(A)$. We say that $\{\mu_k\}_k\subset \mathcal{M}_+(A)$ converges to $\mu \in \mathcal{M}_+(A)$ with respect to the weak star topology and we write $\mu_k \overset{*}{\rightharpoonup} \mu$ if
\begin{align*}
\lim_{k\to \infty}\int_{A} \varphi \,\mathrm{d}\mu_k = \int_{A} \varphi \,\mathrm{d}\mu \text{ for all } \varphi \in C_c(A)\,.
\end{align*}
we denote by $BV(A)$ the space of functions of bounded variation in $A$ and we denote by $BV_{\mathrm{loc}}(A)=\{u \colon u \in BV(K) \text{ for all } K \subset\subset A, K \text{ open}\}$. Given a  function $u \in BV(A)$ we use the notation of \cite{AFP} for the jump set $J(u)$  and the measure theoretic normal $\nu_u \colon J(u) \to \mathbb{S}^{n-1}$. For $V\subset A$, $V\in \mathfrak{M}$ we denote the relative perimeter of $V$ in $A$ by
\begin{align*}
\mathrm{Per}(V,A) = \sup \left\{\int_V \mathrm{div}\, v\,\mathrm{d}x \colon v \in C_c^\infty(A;\mathbb{R}^n), \|v\|_\infty\leq 1\right\}\,.
\end{align*}
For $\nu \in \mathbb{S}^{n-1}$ we set
\begin{align}\label{def:unu}
u_\nu(i):=\begin{cases} 1 &\text{if } \langle x,\nu\rangle \geq 0\,;\\
0 &\text{otherwise.}
\end{cases}
\end{align}
In Section~\ref{sec:Notation}--Section~\ref{sec:Gammaconvergence} we set $n=3$.
 \EEE

\subsection*{Definition of $\mathrm{HCP}$ and $\mathrm{FCC}$ lattices} \label{sec:HCPFCC} In the following we define the \emph{face-centered cubic lattice} (short $\mathrm{FCC}$-lattice) and the \emph{hexagonal closed-packed lattice} (short $\mathrm{HCP}$-lattice). To this end, we introduce the verctors
\begin{align}\label{def:vectors2}
b_1 :=\frac{1}{\sqrt{2}} \begin{pmatrix}
1\\
1\\
0
\end{pmatrix}\,, \quad b_2 :=\frac{1}{\sqrt{2}} \begin{pmatrix}
1\\
0\\
1
\end{pmatrix}\,, \quad b_3 :=\frac{1}{\sqrt{2}} \begin{pmatrix}
0\\
1\\
1
\end{pmatrix}\,
\end{align}
and 
\begin{align}\label{def:vectors}
e_1 := \begin{pmatrix}
1\\
0\\
0
\end{pmatrix}\,, \quad e_2 :=\frac{1}{2}\begin{pmatrix}
1\\
\sqrt{3}\\
0
\end{pmatrix}\,, \quad e_3 :=\frac{2}{3}\sqrt{6}\begin{pmatrix}
0\\
0\\
1
\end{pmatrix}\,, \quad v_1 :=\frac{1}{3}\left(e_1+e_2\right) +\frac{1}{2}e_3\,.
\end{align}
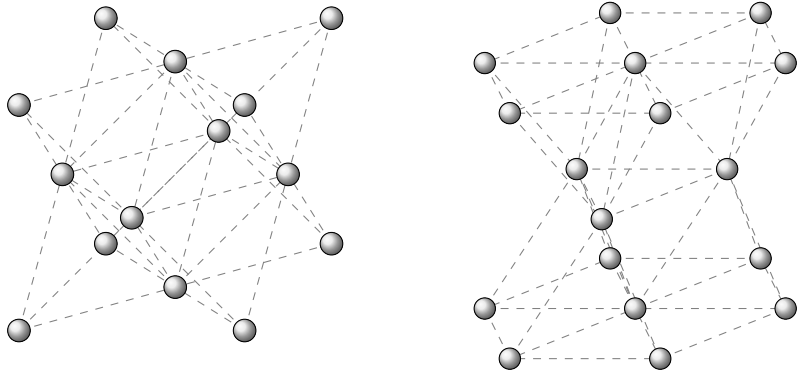
\begin{figure}
\begin{tikzpicture}

 \begin{scope}[shift={(0,0,.5)},scale=3]
\draw[gray,dashed](0,0,0)--++(1,1,0);

\draw[gray,dashed](1,0,0)--(0,1,0);

\draw[gray,dashed](0,0,0)--(1,0,-1);
\draw[gray,dashed](1,0,0)--(0,0,-1);
\draw[gray,dashed](1,0,0)--(1,1,-1);
\draw[gray,dashed](1,0,-1)--(1,1,0);
\draw[gray,dashed](0,0,0)--(0,1,-1);
\draw[gray,dashed](0,1,0)--(0,0,-1);
\draw[gray,dashed](0,1,0)--(1,1,-1);
\draw[gray,dashed](1,1,0)--(0,1,-1);
\draw[gray,dashed](1,1,-1)--(0,0,-1);
\draw[gray,dashed](1,0,-1)--(0,1,-1);


\draw[gray,dashed](.5,0,-.5)--(.5,.5,0);

\draw[gray,dashed](.5,0,-.5)--(1,.5,-.5);
\draw[gray,dashed](.5,0,-.5)--(0,.5,-.5);
\draw[gray,dashed](.5,0,-.5)--(.5,.5,-1);

\draw[gray,dashed](.5,1,-.5)--(.5,.5,0);

\draw[gray,dashed](.5,1,-.5)--(1,.5,-.5);
\draw[gray,dashed](.5,1,-.5)--(0,.5,-.5);
\draw[gray,dashed](.5,1,-.5)--(.5,.5,-1);

\draw[gray,dashed](.5,.5,0)--(1,.5,-.5);
\draw[gray,dashed](.5,.5,0)--(0,.5,-.5);

\draw[gray,dashed](.5,.5,-1)--(1,.5,-.5);
\draw[gray,dashed](.5,.5,-1)--(0,.5,-.5);

\draw[ball color = gray!20](0,0,0)circle(.05);
\draw[ball color = gray!20](1,0,0)circle(.05);
\draw[ball color = gray!20](0,1,0)circle(.05);
\draw[ball color = gray!20](0,0,-1)circle(.05);
\draw[ball color = gray!20](1,1,0)circle(.05);
\draw[ball color = gray!20](0,1,-1)circle(.05);
\draw[ball color = gray!20](1,1,-1)circle(.05);
\draw[ball color = gray!20](1,0,-1)circle(.05);
\draw[ball color = gray!20](.5,0,-.5)circle(.05);
\draw[ball color = gray!20](.5,.5,0)circle(.05);
\draw[ball color = gray!20](0,.5,-.5)circle(.05);
\draw[ball color = gray!20](1,.5,-.5)circle(.05);
\draw[ball color = gray!20](.5,1,-.5)circle(.05);
\draw[ball color = gray!20](.5,.5,-1)circle(.05);

\end{scope}

 \begin{scope}[shift={(8,.1,0)},scale=2]


\draw[gray,dashed](.5,0,{-sqrt(3)/2})--(-.5,0,{-sqrt(3)/2})--(-1,0,0)--(-.5,0,{sqrt(3)/2})--(.5,0,{sqrt(3)/2})--(1,0,0)--(.5,0,{-sqrt(3)/2});

\draw[gray,dashed](.5,{2*sqrt(6)/3},{-sqrt(3)/2})--(-.5,{2*sqrt(6)/3},{-sqrt(3)/2})--(-1,{2*sqrt(6)/3},0)--(-.5,{2*sqrt(6)/3},{sqrt(3)/2})--(.5,{2*sqrt(6)/3},{sqrt(3)/2})--(1,{2*sqrt(6)/3},0)--(.5,{2*sqrt(6)/3},{-sqrt(3)/2});

\draw[gray,dashed](.5,0,{-sqrt(3)/2})--(-.5,0,{sqrt(3)/2});
\draw[gray,dashed](-.5,0,{-sqrt(3)/2})--(.5,0,{sqrt(3)/2});
\draw[gray,dashed](-1,0,0)--(1,0,0);

\draw[gray,dashed](.5,{2*sqrt(6)/3},{-sqrt(3)/2})--(-.5,{2*sqrt(6)/3},{sqrt(3)/2});
\draw[gray,dashed](-.5,{2*sqrt(6)/3},{-sqrt(3)/2})--(.5,{2*sqrt(6)/3},{sqrt(3)/2});
\draw[gray,dashed](-1,{2*sqrt(6)/3},0)--(1,{2*sqrt(6)/3},0);

\draw[gray,dashed](0,0,0)--(.5,{sqrt(6)/3},{-sqrt(3)/6});
\draw[gray,dashed](1,0,0)--(.5,{sqrt(6)/3},{-sqrt(3)/6});
\draw[gray,dashed](.5,0,{-sqrt(3)/2})--(.5,{sqrt(6)/3},{-sqrt(3)/6});

\draw[gray,dashed](0,{2*sqrt(6)/3},0)--(.5,{sqrt(6)/3},{-sqrt(3)/6});
\draw[gray,dashed](1,{2*sqrt(6)/3},0)--(.5,{sqrt(6)/3},{-sqrt(3)/6});
\draw[gray,dashed](.5,{2*sqrt(6)/3},{-sqrt(3)/2})--(.5,{sqrt(6)/3},{-sqrt(3)/6});

\draw[gray,dashed](-1,0,0)--(-.5,{sqrt(6)/3},{-sqrt(3)/6});
\draw[gray,dashed](0,0,0)--(-.5,{sqrt(6)/3},{-sqrt(3)/6});
\draw[gray,dashed](-.5,0,{-sqrt(3)/2})--(-.5,{sqrt(6)/3},{-sqrt(3)/6});

\draw[gray,dashed](-1,{2*sqrt(6)/3},0)--(-.5,{sqrt(6)/3},{-sqrt(3)/6});
\draw[gray,dashed](0,{2*sqrt(6)/3},0)--(-.5,{sqrt(6)/3},{-sqrt(3)/6});
\draw[gray,dashed](-.5,{2*sqrt(6)/3},{-sqrt(3)/2})--(-.5,{sqrt(6)/3},{-sqrt(3)/6});

\draw[gray,dashed](-.5,0,{sqrt(3)/2})--(0,{sqrt(6)/3},{sqrt(3)/3});
\draw[gray,dashed](.5,0,{sqrt(3)/2})--(0,{sqrt(6)/3},{sqrt(3)/3});
\draw[gray,dashed](0,0,0)--(0,{sqrt(6)/3},{sqrt(3)/3});

\draw[gray,dashed](-.5,{2*sqrt(6)/3},{sqrt(3)/2})--(0,{sqrt(6)/3},{sqrt(3)/3});
\draw[gray,dashed](.5,{2*sqrt(6)/3},{sqrt(3)/2})--(0,{sqrt(6)/3},{sqrt(3)/3});
\draw[gray,dashed](0,{2*sqrt(6)/3},0)--(0,{sqrt(6)/3},{sqrt(3)/3});
\draw[gray,dashed](0,{sqrt(6)/3},{sqrt(3)/3})--(.5,{sqrt(6)/3},{-sqrt(3)/6});
\draw[gray,dashed](0,{sqrt(6)/3},{sqrt(3)/3})--(-.5,{sqrt(6)/3},{-sqrt(3)/6});
\draw[gray,dashed](.5,{sqrt(6)/3},{-sqrt(3)/6})--(-.5,{sqrt(6)/3},{-sqrt(3)/6});

\draw[ball color = gray!20] (0,0,0)circle({.1/sqrt(2)});
\draw[ball color = gray!20] (.5,0,{-sqrt(3)/2})circle({.1/sqrt(2)});
\draw[ball color = gray!20] (-.5,0,{-sqrt(3)/2})circle({.1/sqrt(2)});
\draw[ball color = gray!20] (-.5,0,{sqrt(3)/2})circle({.1/sqrt(2)});
\draw[ball color = gray!20] (.5,0,{sqrt(3)/2})circle({.1/sqrt(2)});
\draw[ball color = gray!20] (1,0,0)circle({.1/sqrt(2)});
\draw[ball color = gray!20] (-1,0,0)circle({.1/sqrt(2)});

\draw[ball color = gray!20] (.5,{sqrt(6)/3},{-sqrt(3)/6}) circle({.1/sqrt(2)});
\draw[ball color = gray!20] (-.5,{sqrt(6)/3},{-sqrt(3)/6}) circle({.1/sqrt(2)});
\draw[ball color = gray!20] (0,{sqrt(6)/3},{sqrt(3)/3}) circle({.1/sqrt(2)});

\draw[ball color = gray!20] (0,{2*sqrt(6)/3},0)circle({.1/sqrt(2)});
\draw[ball color = gray!20] (.5,{2*sqrt(6)/3},{-sqrt(3)/2})circle({.1/sqrt(2)});
\draw[ball color = gray!20] (-.5,{2*sqrt(6)/3},{-sqrt(3)/2})circle({.1/sqrt(2)});
\draw[ball color = gray!20] (-.5,{2*sqrt(6)/3},{sqrt(3)/2})circle({.1/sqrt(2)});
\draw[ball color = gray!20] (.5,{2*sqrt(6)/3},{sqrt(3)/2})circle({.1/sqrt(2)});
\draw[ball color = gray!20] (1,{2*sqrt(6)/3},0)circle({.1/sqrt(2)});
\draw[ball color = gray!20] (-1,{2*sqrt(6)/3},0)circle({.1/sqrt(2)});

\end{scope}
  
\end{tikzpicture}
\caption{On the left: The $\mathrm{FCC}$-lattice. On the right: The $\mathrm{HCP}$-lattice. Pairs of points at distance one are connected via the dashed lines.}
\label{fig:HCPFCC}
\end{figure}
We define the $\mathrm{FCC}$-lattice as
\begin{align}\label{def:FCC}
\mathcal{L}_{{\mathrm{FCC}}}:=\mathrm{span}_{\mathbb{Z}}\left\{b_1,b_2,b_3\right\}\,
\end{align}
and the $\mathrm{HCP}$-lattice by
\begin{align}\label{def: HCP}
\mathcal{L}_{{\mathrm{HCP}}}:=\mathrm{span}_{\mathbb{Z}}\left\{e_1,e_2, e_3\right\} \cup \left(\mathrm{span}_{\mathbb{Z}}\left\{e_1,e_2, e_3\right\}+v_1\right)\,.
\end{align}
The two lattices are illustrated in Figure~\ref{fig:HCPFCC}. \BBB We shall write $\mathcal{L}$ to generically denote one of the two lattices defined above.
We define the \BBB\emph{neighborhood} \EEE of a point $x \in \mathcal{L}_{\mathrm{FCC}}$ as the set
\begin{align}\label{eq:neighbourhoodFCC}
\mathcal{N}_{\mathrm{FCC}}(x):= \{\pm b_1, \pm b_2,\pm b_3, \pm (b_1-b_2), \pm (b_1-b_3), \pm (b_2-b_3)\}+x\,.
\end{align}

Similarly, for a point $x \in \mathcal{L}_{\mathrm{HCP}}$ we define its \BBB \emph{neighborhood}  as follows: if $x\in \mathrm{span}_\mathbb{Z}\{e_1,e_2,e_3\}$ then
\begin{align}\label{eq:neighbourhoodHCP1}
\mathcal{N}_{\mathrm{HCP}}(x) :=\{\pm e_1,\pm  e_2, \pm (e_1-e_2),v_1,v_1-e_1,v_1-e_2,v_1-e_3,v_1-e_1-e_3,v_1-e_2-e_3\} +x\,,
\end{align}
while if $x \in \mathrm{span}_\mathbb{Z}\{e_1,e_2,e_3\}+v_1$ then
\begin{align}\label{eq:neighbourhoodHCP2}
\mathcal{N}_{\mathrm{HCP}}(x):=
\{\pm e_1,\pm e_2,\pm (e_1-e_2),-v_1, e_1-v_1,e_2-v_1, e_3-v_1,e_1+e_3-v_1,e_2+e_3-v_1\} +x\,.
\end{align}
Note that $\mathcal{N}_{\mathrm{FCC}}(x)=\mathcal{N}_{\mathrm{FCC}}(0)+x$ for all $x\in \mathcal{L}_{\mathrm{FCC}}$, while this is no more the case for $x\in \mathcal{L}_{\mathrm{HCP}}$.
Also for $\mathcal{N}$ we omit the subscript if we do not need to distinguish between $\mathrm{FCC}$ and $\mathrm{HCP}$.
  It is straightforward to check that for all $x,y\in \mathcal{L}$,
\begin{align*}
x \in \mathcal{N}(y) \iff |x-y|=1\,.
\end{align*} \EEE 
Given $\mathcal{L}$ we define the \emph{Voronoi cell} of $x \in \mathcal{L}$ (with respect to $\mathcal{L}$) by
\begin{align}\label{def:Voronoi}
\mathcal{V}_{\mathcal{L}}(x):= \{y \in \mathbb{R}^3 \colon |y-x|\leq |y-z| \text{ for all } z\in \mathcal{L}\}\,.
\end{align}
Accordingly, given $\varepsilon>0$ we write $\mathcal{V}_{\varepsilon\mathcal{L}}(x)$ for the Voronoi cell centered at $x \in \varepsilon\mathcal{L}$ with respect to \BBB the scaled lattice \EEE $\varepsilon\mathcal{L}$. Given $X\subset \varepsilon\mathcal{L}$ we say that $y \in \mathcal{N}_\varepsilon(x)$ if and only if $\varepsilon^{-1}y \in \mathcal{N}(\varepsilon^{-1}x)$.

\subsection*{Definition of the Energy} 
Given $X\subset \mathcal{L}$ and $A\subset \mathbb{R}^3$ we define the configurational energy of $X$ localized to the set $A$ as
\begin{align*}
E_{\mathcal{L}}(X,A) := \sum_{x\in X\cap A} (12-\#(\mathcal{N}(x)\cap X))\,.
\end{align*}
In the formula above we can interpret the set $X$ as the occupancy of the crystal $\mathcal{L}$, i.e., the set of those nodes of $\mathcal{L}$ occupied by \BBB atoms\EEE. The quantity $12-\#(\mathcal{N_\mathcal{L}}(x)\cap X)$ is also known as the valence of the point $x$ with respect to $X$, i.e., the number of neighbours missing in $X$ in order to have a neighbourhood of maximal cardinality (the number $12$ in the formula). Note that we can also rewrite the energy as
\begin{align*}
E_{\mathcal{L}}(X,A) =\frac{1}{2} \sum_{x\in \mathcal{L}\cap A}\sum_{y\in\mathcal{L}} c(x,y) |\chi_X(y)-\chi_X(x)|\,,
\end{align*}
where 
\begin{align}\label{def:coefficients}
c(x,y) = \begin{cases} 1 &\text{if } y \in \mathcal{N}(x)\,,\\
0&\text{otherwise.}
\end{cases}
\end{align}
\subsection*{Periodicity of the interaction coefficients}
By definition 
\begin{equation*}
{\mathcal L}_\mathrm{FCC}={\mathcal L}_\mathrm{FCC}+b_1={\mathcal L}_\mathrm{FCC}+b_2
={\mathcal L}_\mathrm{FCC}+b_3. 
\end{equation*}
As a consequence of that, for any $x,y\in \mathcal{L}_\mathrm{FCC}$ it holds that
\begin{align*}
\BBB c(x+b_1,y+b_1) = c(x+b_2,y+b_2) = c(x+b_3,y+b_3) = c(x,y)\,.\EEE
\end{align*}
According to the last two equalities, we say that the lattice $\mathcal{L}_{\mathrm{FCC}}$ as well as the interaction coefficients of its configurational energy are periodic with periodicity cell
\begin{align}\label{def:periodicitycellFCC}
T_{{\mathrm{FCC}}} = \{ \lambda_1 b_1 +\lambda_2 b_2 + \lambda_3 b_3 \colon \lambda_i \in [0,1)\}\,,
\end{align}
or simply that they are $T_{{\mathrm{FCC}}}$-periodic. 
Similarly, we \BBB observe \EEE that ${\mathcal{L}}_{\mathrm{HCP}}$ and its interaction coefficients 
are $T_{{\mathrm{HCP}}}$-periodic, where the periodicity cell is defined as 
\begin{align}\label{def:periodicitycellHCP}
T_{{\mathrm{HCP}}} = \{ \lambda_1 e_1 +\lambda_2 e_2 + \lambda_3 e_3 \colon \lambda_i \in [0,1)\}\,.
\end{align}

\subsection*{Surface scaling of the configurational energy}
For $\varepsilon>0$ such that $\varepsilon^3 \#X\to 1$ as $\varepsilon\to 0$ we consider the following family of scaled energies
\begin{align*}
G_{\mathcal{L},\varepsilon}(X) := \varepsilon^{2}\sum_{x \in X} \BBB(12-\#(\mathcal{N}_\varepsilon(x)\cap X))\EEE\,.
\end{align*}
Note that, modeling the points in $X\subset \varepsilon {\mathcal L}$ as hard-spheres of $\varepsilon$ diameter, the quantity $\varepsilon^3 \#X$ is of order one (according to the scaling assumption above) and proportional to the volume of the union of the spheres in $X$. Hence, the scaling factor $\varepsilon^2$ in the energy functional turns out to be a surface scaling. We also define the rescaled empirical measures associated to the configuration $X$ as
\begin{align}\label{def:empirical measures}
\mu_\varepsilon := \varepsilon^3\sum_{x\in X} \delta_x\,.
\end{align}
 Upon identifying $X\subset \varepsilon\mathcal{L}$ with its empirical measure $\mu_\varepsilon$, we can regard these energies to be defined on $\mathcal{M}_+(\mathbb{R}^3)$ by setting 
 \begin{align}\label{def:extensiondiscrete}
 E_{\mathcal{L},\varepsilon}(\mu) := \begin{cases} G_{\mathcal{L},\varepsilon}(X) &\text{if } \mu =\mu_\varepsilon \text{ given in } \eqref{def:empirical measures} \text{ for some } X \subset \varepsilon\mathcal{L}\,,\\
 +\infty &\text{otherwise.}
 \end{cases}
 \end{align}
\subsection*{The coarse grained continuum energy} For $\mathcal{L}$ we define the \emph{homogenized surface energy density} $\varphi_{\mathcal{L}} \colon \mathbb{R}^3\to [0,+\infty]$  as the convex positively homogeneous function of degree one such that for all $\nu \in \mathbb{S}^{2}$ we have
\begin{align}\label{def:varphi}
\varphi_{\mathcal{L}}(\nu) := \lim_{T\to +\infty} \frac{1}{T^{2}} \inf\left\{ E_{\mathcal{L}}(X,Q^\nu_T) : X \subset \mathcal{L}, \chi_X(i)= u_\nu(i) \text{ for } i\in \mathcal{L} \setminus Q^\nu_{T-3}\right\}\,,
\end{align}
where $u_\nu$ is given by \eqref{def:unu}.

In order to be able to apply \cite[Proposition 2.6]{ChaKre} and eventually obtain an alternative representation of $\varphi_{\mathcal{L}}$ (up to a coordinate transformation and reparametrization of the interaction coefficients), we define for $u \colon \mathcal{L} \to \mathbb{R}$, $A \subset \mathbb{R}^3$ the energy\BBB
\begin{align*}
F_\mathcal{L}(u,A) := \frac{1}{2}\sum_{x \in \mathcal{L}\cap A} \sum_{y \in \mathcal{L}} c(x,y)|u(y)-u(x)|\,. 
\end{align*}\EEE
We are now in position to state \cite[Proposition 2.6]{ChaKre}.

\begin{proposition}\label{prop:representation}\BBB Let $c(x,y)$ be as in \eqref{def:coefficients}.\EEE Then
\begin{align}\label{eq:propvarphi}
\varphi_{\mathcal{L}}(\nu) = \frac{1}{|T_{\mathcal{L}}|}\inf\left\{F_{\mathcal{L}}(u,T_{\mathcal{L}}) : u\colon \mathcal{L} \to \mathbb{R},  u(\cdot)-\langle\nu,\cdot\rangle \text{ is } T_{\mathcal{L}}\text{-periodic}\right\}\,.
\end{align} 
\end{proposition}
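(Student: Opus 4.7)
The plan is to recognize that the definition of $\varphi_{\mathcal{L}}$ in \eqref{def:varphi} is precisely a cubic asymptotic cell problem for the discrete perimeter functional $F_\mathcal{L}$, and then to invoke \cite[Proposition 2.6]{ChaKre} to recast it as a single-cell formula. Since $\chi_X$ takes values in $\{0,1\}$, the quantity $|\chi_X(y)-\chi_X(x)|$ is $1$ precisely when $\{x,y\}$ is a broken bond with respect to the partition $X \cup (\mathcal{L}\setminus X)$. Using the second representation of $E_\mathcal{L}$ given in the excerpt, it follows at once that $E_{\mathcal{L}}(X, Q_T^\nu) = F_{\mathcal{L}}(\chi_X, Q_T^\nu)$, so that \eqref{def:varphi} rewrites as
$$\varphi_{\mathcal{L}}(\nu) = \lim_{T\to\infty}\frac{1}{T^2}\inf\bigl\{F_{\mathcal{L}}(u, Q^\nu_T) : u\colon\mathcal{L}\to\{0,1\},\; u=u_\nu \text{ on } \mathcal{L}\setminus Q^\nu_{T-3}\bigr\},$$
i.e.\ in the canonical form of a discrete surface energy density.

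Next I would verify that the hypotheses of \cite[Proposition 2.6]{ChaKre} are fulfilled in our setting: the coefficients $c(x,y)$ in \eqref{def:coefficients} are nonnegative, bounded, of finite interaction range (only nearest neighbors contribute), and invariant under translations by the generators of $T_{\mathrm{FCC}}$ or $T_{\mathrm{HCP}}$ listed in \eqref{def:periodicitycellFCC}--\eqref{def:periodicitycellHCP}. The connectedness of the nearest-neighbor graph modulo periodicity is immediate from \eqref{eq:neighbourhoodFCC}--\eqref{eq:neighbourhoodHCP2}, since in each case the displacement vectors to nearest neighbors generate the lattice over $\mathbb{Z}$; in the HCP case one also uses that one of the neighbors of a point in $\mathrm{span}_\mathbb{Z}\{e_1,e_2,e_3\}$ lies in the shifted sublattice $\mathrm{span}_\mathbb{Z}\{e_1,e_2,e_3\}+v_1$, ensuring that both orbits are covered. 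With these ingredients the cited result applies verbatim and yields \eqref{eq:propvarphi}.

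The main obstacle, which is the actual content of \cite[Proposition 2.6]{ChaKre}, is the equivalence between the cubic asymptotic problem restricted to $\{0,1\}$-valued competitors with half-space boundary data and the single-cell problem over real-valued affine-plus-periodic competitors. Its proof rests on the discrete coarea identity $F_\mathcal{L}(u,A)=\int_\mathbb{R} F_\mathcal{L}(\chi_{\{u\le s\}},A)\,\mathrm{d}s$ combined with a slicing argument: for any nearly optimal $u$ in the cell problem, a mean-value choice of level $s$ produces a $\{0,1\}$-valued configuration that, after tiling along a large cube and joined to $u_\nu$ through a surface-negligible transition layer near $\partial Q^\nu_T$, gives a competitor for the cubic problem whose energy per unit area is controlled by $|T_\mathcal{L}|^{-1}F_\mathcal{L}(u,T_\mathcal{L})$. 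For the converse inequality one periodizes an almost optimal cubic configuration and sums the broken bonds per unit cell, which, by translation-invariance of $c(\cdot,\cdot)$, yields an admissible competitor in the cell formula with energy no larger than the cubic asymptotic value. Passing to the infimum and letting $T\to\infty$ on both sides closes the argument.
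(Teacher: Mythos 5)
Your proposal is correct and takes essentially the same route as the paper: the paper offers no proof of Proposition~\ref{prop:representation} beyond identifying $E_{\mathcal L}(X,\cdot)=F_{\mathcal L}(\chi_X,\cdot)$ and citing \cite[Proposition~2.6]{ChaKre}, which is exactly what you do (you also helpfully check the hypotheses of the cited result and outline its coarea/slicing-plus-periodization proof, neither of which the paper spells out). The only small point you could add is the paper's remark that the application of \cite[Proposition~2.6]{ChaKre} is ``up to a coordinate transformation and reparametrization of the interaction coefficients,'' since the cited statement is formulated on a standard cubic periodicity cell, whereas $T_{\mathcal L}$ is spanned by $\{b_i\}$ or $\{e_i\}$.
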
 

With the definition of surface energy density at hand we can define the {\it coarse-grained continuum energy} $E_{\mathcal{L}}\colon \mathcal{M}_+(\mathbb{R}^3) \to [0,+\infty]$ as \BBB
\begin{align}\label{def:limitenergy}
E_\mathcal{L}(\mu) := \begin{cases} \int_{\partial^*V} \varphi_{\mathcal{L}}(\nu)\,\mathrm{d}\mathcal{H}^{2} &\text{if } \mu = \sqrt{2}\mathcal{L}^3\llcorner V, \chi_V\in BV_{\mathrm{loc}}(\mathbb{R}^3)\,,\\
+\infty &\text{otherwise.}
\end{cases}
\end{align}
\EEE
with $\varphi_\mathcal{L}$ given by \eqref{def:varphi}. Here, $\partial^*V$ denotes the reduced boundary of the set $V$, $\nu$ its outer normal and $\mathcal{H}^{2}$, \BBB as noted at the beginning of this section\EEE, stands for the $2$-dimensional Hausdorff measure in $\mathbb{R}^3$ (cf.~\cite{AFP}, Chapters 2.8 and 3.5). 

\BBB \noindent In what follows we say that $F_\varepsilon$ $\Gamma$-converges to $F$ if for all sequences $\{\varepsilon_j\}_j$ converging to $0$ we have $\Gamma$-$\lim_j F_{\varepsilon_j} = F$.\EEE


\subsection*{The Wulff Crystal} 
\BBB In this section we calculate the Wulff crystals of the coarse grained $\mathrm{FCC}$ and $\mathrm{HCP}$ lattices. To the best of our knowledge, this is the first time that such a calculation has been carried out in a rigorous analytical way. In what follows we introduce the notion of Wulff shape in the general case of $\mathbb{R}^n$. While in the rest of this section we limit ourselves to the case $n=3$, in   Section~\ref{appendix:general}  we consider general $n$.    \EEE

Given $\varphi \colon \mathbb{R}^n \to [0,+\infty)$ convex, non-degenerate, (i.e.~there exist $0<c<C$ such that $c \leq \varphi(\nu) \leq C$ for all $\nu \in \mathbb{S}^{n-1}$) positively homogeneous of degree one, we define the Wulff set of $\varphi$ by
\begin{align}\label{def:wulffcrystal}
W_\varphi := \{ \zeta \in \mathbb{R}^n\colon \langle \zeta,\nu \rangle \leq \varphi(\nu) \text{ for all } \nu \in \mathbb{S}^{n-1}\}\,.
\end{align}
Thanks to the anistropic isoperimetric inequality (cf.~\cite{FonsecaMueller}), we have that $ W_\varphi$ is the unique (up to rigid motions) minimizer of 
\begin{align*}
\min\left\{ \int_{\partial^* A} \varphi(\nu) \,\mathrm{d}\mathcal{H}^{n-1} \colon |A| = |W_\varphi| \right\}\,.
\end{align*}
Given $\lambda >0$ we set $W_\lambda = \left(\frac{\lambda}{|W_\varphi|}\right)^{1/n} W_\varphi$ so that $|W_\lambda|=\lambda$ and, by scaling, it solves the minimum problem above among all sets $A \subset \mathbb{R}^n$ with $|A|=\lambda$.
We recall here that, defining $\varphi^\circ \colon \mathbb{R}^n \to [0,+\infty)$ by
\begin{align*}
\varphi^\circ(\xi) = \sup_{\nu \in \mathbb{S}^{n-1}} \frac{\langle\nu,\xi\rangle}{\varphi(\nu)}\,,
\end{align*}
it holds that $W_\varphi = \{\varphi^\circ \leq 1\}$. 

\GGG\begin{definition}\label{def:Gamma-convergence} Let $(X,\tau)$ be a topological space and let $F_k \colon X \to [0,+\infty]$. For $x \in X$ we set
\begin{align*}
\Gamma\text{-}\limsup_{k \to +\infty} F_k(x) = \inf\left\{\limsup_{k\to +\infty} F_k(x_k) \colon x_k \overset{\tau}{\to} x\right\} 
\end{align*}
and
\begin{align*}
\Gamma\text{-}\liminf_{k \to +\infty} F_k(x) = \inf\left\{\liminf_{k\to +\infty} F_k(x_k) \colon x_k \overset{\tau}{\to} x\right\}\,.
\end{align*}
If there exists $F \colon X \to [0,+\infty]$ such that
\begin{align*}
F(x)=\Gamma\text{-}\limsup_{k \to +\infty} F_k(x)= \Gamma\text{-}\liminf_{k \to +\infty} F_k(x)\,,
\end{align*}
we say that $F_k$ $\Gamma$-converges with respect to $\tau$ to $F$ and we write
\begin{align*}
F(x) = \Gamma\text{-}\lim_{k \to +\infty} F_k(x)\,.
\end{align*}
If we have $(F_\varepsilon)_{\varepsilon>0} \colon X \to [0,+\infty]$ we say that $F_\varepsilon$ $\Gamma$-converges with respect to $\tau$ to $F$ if $F_{\varepsilon_k}$ $\Gamma$-converges with respect to $\tau$ to $F$ for all $\varepsilon_k \to 0$.
\end{definition}
\EEE

The following variational coarse-graining result is proved in Section \ref{sec:Gammaconvergence}.
\BBB\begin{theorem}\label{theorem:Gammaconvergence} Let $\varepsilon \to 0$, and let $E_{\mathcal L,\varepsilon}$ and $E_{\mathcal L}$ be the energy functionals defined in \eqref{def:extensiondiscrete} and \eqref{def:limitenergy}, respectively. 
\begin{itemize}
\item[i)] (Compactness) Let  $\{\mu_\varepsilon\}_\varepsilon\subset\mathcal{M}_+(\mathbb{R}^3)$ be such that 
\begin{align*}
\sup_{\varepsilon >0} E_{\mathcal{L},\varepsilon}(\mu_\varepsilon) <+\infty\,.
\end{align*}
Then there exists $V\subset \mathbb{R}^3$ such that $\chi_V \in BV_{\mathrm{loc}}(\mathbb{R}^3)$, $\mu=\sqrt{2}\mathcal{L}^3\llcorner V$,  and a subsequence (not relabeled) such that $\mu_\varepsilon \overset{*}{\rightharpoonup} \mu$. Furthermore, if $\mu_\varepsilon$ is such that 
\begin{align*}
E_{\mathcal{L},\varepsilon}(\mu_\varepsilon) =\inf_{\nu \in \mathcal{M}_+(\mathbb{R}^3):\, |\nu|(\mathbb{R}^3)=\varepsilon^3 n_\varepsilon} E_{\mathcal{L},\varepsilon}(\mu)\,,
\end{align*}
with $\varepsilon^3 n_\varepsilon\to \sqrt{2}v$,
then $\mu = \sqrt{2}\mathcal{L}^3\llcorner W_{\varphi_\mathcal{L}}^v$, where $ W_{\varphi_\mathcal{L}}^{v}=\lambda W_{\varphi_\mathcal{L}}$ (defined in \eqref{def:wulffcrystal}) for $\lambda>0$ such that  $|W_{\varphi_\mathcal{L}}^{v}|= v$.


\EEE

\item[ii)] (Liminf inequality)  Let $\mu\in \mathcal{M}_+(\mathbb{R}^3)$ be such that $\mu_\varepsilon \overset{*}{\rightharpoonup} \mu$. Then
\begin{align*}
E_\mathcal{L}(\mu) \leq \liminf_{\varepsilon\to 0}E_{\mathcal{L},\varepsilon}(\mu_\varepsilon)\,.
\end{align*}
\item[iii)] (Limsup inequality) Let $\mu \in \mathcal{M}_+(\mathbb{R}^3)$. Then there exists $\{\mu_\varepsilon\}_\varepsilon \subset \mathcal{M}_+(\mathbb{R}^3)$ such that $\mu_\varepsilon \overset{*}{\rightharpoonup} \mu$ and
\begin{align*}
E_\mathcal{L}(\mu) \geq \liminf_{\varepsilon\to 0}E_{\mathcal{L},\varepsilon}(\mu_\varepsilon)\,.
\end{align*}
\end{itemize}
\end{theorem}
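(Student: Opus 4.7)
The plan is to recognize $E_{\mathcal L,\varepsilon}$ as a discrete perimeter-type energy and to derive Theorem~\ref{theorem:Gammaconvergence} from the general $\Gamma$-convergence and concentration-compactness machinery developed in Section~\ref{appendix:general} (which extends the integral representation of \cite{AliCicRuf} and \cite{ABCS,AlicandroGelli} to periodic multi-lattices like $\mathcal L_{\mathrm{HCP}}$). Setting $u_\varepsilon:=\chi_X$ on $\varepsilon\mathcal L$, the energy can be rewritten as
\[
E_{\mathcal L,\varepsilon}(\mu_\varepsilon)=\frac{\varepsilon^{2}}{2}\sum_{x\in \varepsilon\mathcal L}\sum_{y\in \varepsilon\mathcal L}c\bigl(\varepsilon^{-1}x,\varepsilon^{-1}y\bigr)\,|u_\varepsilon(y)-u_\varepsilon(x)|,
\]
with finite-range, $T_{\mathcal L}$-periodic coefficients, which is exactly the class of functionals the general theory covers.

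For part (i), I would associate to each configuration the piecewise constant function $\widetilde u_\varepsilon$ equal to $\chi_X(x)$ on the Voronoi cell $\mathcal V_{\varepsilon\mathcal L}(x)$. Each unsatisfied bond $\{x,y\}$ with $y\in \mathcal N_\varepsilon(x)$ contributes a shared face of $\mathcal H^2$-measure of order $\varepsilon^2$ to $\partial\{\widetilde u_\varepsilon=1\}$, and a direct counting gives $\mathrm{Per}(\{\widetilde u_\varepsilon=1\})\le C\,E_{\mathcal L,\varepsilon}(\mu_\varepsilon)$. The uniform energy bound then yields, by BV-compactness, a set $V$ of locally finite perimeter such that $\widetilde u_\varepsilon\to \chi_V$ in $L^1_{\mathrm{loc}}$. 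Since each occupied node carries mass $\varepsilon^3$ and the two lattices have density $|T_{\mathcal L}|^{-1}=\sqrt 2$ (the volume of both $T_{\mathrm{FCC}}$ and $T_{\mathrm{HCP}}$ being $1/\sqrt 2$), one concludes $\mu_\varepsilon\overset{*}{\rightharpoonup}\sqrt 2\,\mathcal L^3\llcorner V$.

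For parts (ii) and (iii), I would invoke the integral representation theorem of Section~\ref{appendix:general}: the $\Gamma$-limit exists on $BV_{\mathrm{loc}}$-characteristic functions and equals an anisotropic perimeter $\int_{\partial^*V}\varphi_{\mathcal L}(\nu)\,d\mathcal H^{2}$, with $\varphi_{\mathcal L}$ the cell-formula \eqref{def:varphi} (equivalently given by \eqref{eq:propvarphi} via Proposition~\ref{prop:representation}). Concretely, the liminf inequality follows by the standard blow-up argument of Fonseca--M\"uller at points of $\partial^*V$, comparing the local energy of $u_\varepsilon$ on small cubes $Q^\nu_T(x_0)$ with the minimization problem defining $\varphi_{\mathcal L}(\nu)$; the limsup is built by first producing a recovery sequence for half-spaces $u_\nu$ out of a quasi-optimal profile for \eqref{def:varphi}, then by gluing such profiles on a finite partition of $\partial^*V$ when $V$ is polyhedral, and finally extending to arbitrary $V$ of finite perimeter by density and a standard diagonal argument.

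The delicate point is the last part of (i): BV-compactness alone does not prevent minimizing sequences from losing mass at infinity or splitting into several pieces, so identifying the limit with the full Wulff shape $W^v_{\varphi_{\mathcal L}}$ is not automatic. Here I would use the concentration-compactness lemma proved in Section~\ref{appendix:general} (Lemma~\ref{lemma:concentration}), which, together with the $\Gamma$-convergence proved in parts (ii)--(iii) and the uniqueness (up to translation) of the minimizer of the continuum anisotropic isoperimetric problem \cite{FonsecaMueller}, forces the limit set to be a translate of $W^v_{\varphi_{\mathcal L}}$; after recentering this gives the stated convergence. I expect this concentration step to be the main obstacle, since it is precisely the three-dimensional extension of the two-dimensional result of \cite{Yuen} that requires the more refined geometric-measure-theoretic tools mentioned in the introduction.
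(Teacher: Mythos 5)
Your proposal follows the same strategy as the paper: reduce to the general results of Section~\ref{appendix:general} (Voronoi-cell compactness via Proposition~\ref{prop:Compactnessgeneral} and Lemma~\ref{lemma:equivalenceoftopgeneral}, the periodic integral representation Theorem~\ref{theorem:intrepperiodic}, and the concentration Lemma~\ref{lemma:concentration}) after recasting the configurational energy as a finite-range discrete perimeter functional. The only thing you leave tacit --- and the paper makes explicit via Lemmas~\ref{lemma:shapeFCC} and \ref{lemma:shapeHCP} together with the direct verification of (L1), (L2) and periodicity --- is the lattice-specific check that $\mathcal L_{\mathrm{FCC}}$ and $\mathcal L_{\mathrm{HCP}}$ are admissible periodic sets and that the sticky-disc neighborhood $\mathcal N(x)$ coincides with the Voronoi nearest-neighbor set $\mathcal{NN}(x)$ of Definition~\ref{def:nearestneighbours}, which is exactly what licenses applying those general theorems with $c_{nn}\equiv 1$.
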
\EEE

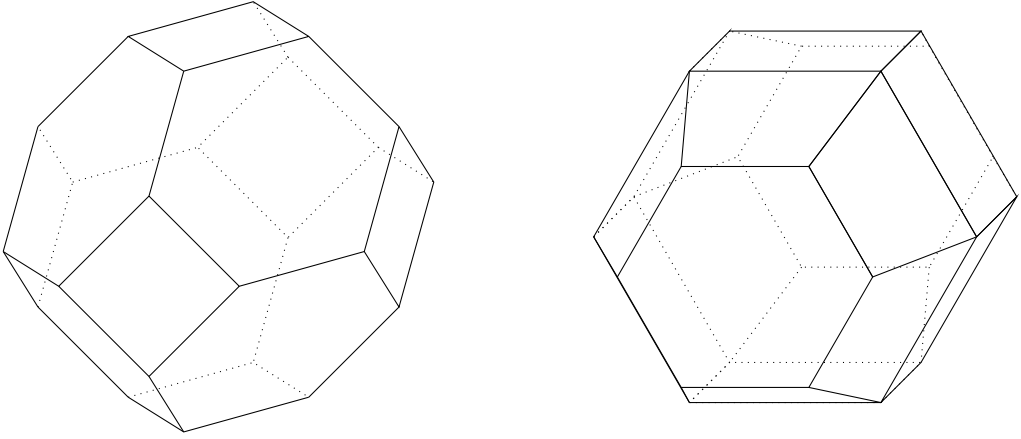
\begin{figure}[htp]
\begin{tikzpicture}[scale=.6]

\draw(4,2,0)--(4,0,2)--(4,-2,0)--(4,0,-2)--(4,2,0);
\draw(2,4,0)--(0,4,2)--(-2,4,0)--(0,4,-2)--(2,4,0);
\draw(0,2,4)--(2,0,4)--(0,-2,4)--(-2,0,4)--(0,2,4);

\draw(4,2,0)--(2,4,0);
\draw(4,0,2)--(2,0,4);
\draw(0,4,2)--(0,2,4);

\draw[dotted](0,2,-4)--(2,0,-4)--(0,-2,-4)--(-2,0,-4)--(0,2,-4);
\draw[dotted](0,2,-4)--(0,4,-2);
\draw[dotted](2,0,-4)--(4,0,-2);
\draw[dotted](0,-2,-4)--(0,-4,-2);
\draw(-4,2,0)--(-4,0,2);
\draw(-4,0,2)--(-4,-2,0);
\draw[dotted](-4,-2,0)--(-4,0,-2)--(-4,2,0);
\draw(-4,2,0)--(-2,4,0);
\draw(-4,0,2)--(-2,0,4);
\draw(-4,-2,0)--(-2,-4,0);
\draw[dotted](-4,0,-2)--(-2,0,-4);

\draw(2,-4,0)--(0,-4,2)--(-2,-4,0);
\draw[dotted](-2,-4,0)--(0,-4,-2)--(2,-4,0);
\draw(2,-4,0)--(4,-2,0);
\draw(0,-4,2)--(0,-2,4);

\begin{scope}[shift={(13,0)}]

\draw({2*sqrt(2)},0,{2*sqrt(3)})--({sqrt(2)},{sqrt(6)},{2*sqrt(3)})--({-sqrt(2)},{sqrt(6)},{2*sqrt(3)})--({-2*sqrt(2)},0,{2*sqrt(3)})--({-sqrt(2)},{-sqrt(6)},{2*sqrt(3)})--({sqrt(2)},{-sqrt(6)},{2*sqrt(3)})--({2*sqrt(2)},0,{2*sqrt(3)});

\draw({1.5*sqrt(2)},{1.5*sqrt(6)},{2/3*sqrt(3)})--({-1.5*sqrt(2)},{1.5*sqrt(6)},{2/3*sqrt(3)})--({-1.5*sqrt(2)},{1.5*sqrt(6)},{-2/3*sqrt(3)})--({1.5*sqrt(2)},{1.5*sqrt(6)},{-2/3*sqrt(3)})--({1.5*sqrt(2)},{1.5*sqrt(6)},{2/3*sqrt(3)});

\draw({sqrt(2)},{sqrt(6)},{2*sqrt(3)})--({1.5*sqrt(2)},{1.5*sqrt(6)},{2/3*sqrt(3)});
\draw({-sqrt(2)},{sqrt(6)},{2*sqrt(3)})--({-1.5*sqrt(2)},{1.5*sqrt(6)},{2/3*sqrt(3)});
\draw[dotted]({-sqrt(2)},{sqrt(6)},{-2*sqrt(3)})--({-1.5*sqrt(2)},{1.5*sqrt(6)},{-2/3*sqrt(3)});
\draw[dotted]({sqrt(2)},{sqrt(6)},{-2*sqrt(3)})--({1.5*sqrt(2)},{1.5*sqrt(6)},{-2/3*sqrt(3)});


\draw[dotted]({2*sqrt(2)},0,{-2*sqrt(3)})--({sqrt(2)},{sqrt(6)},{-2*sqrt(3)})--({-sqrt(2)},{sqrt(6)},{-2*sqrt(3)})--({-2*sqrt(2)},0,{-2*sqrt(3)})--({-sqrt(2)},{-sqrt(6)},{-2*sqrt(3)})--({sqrt(2)},{-sqrt(6)},{-2*sqrt(3)})--({2*sqrt(2)},0,{-2*sqrt(3)});


\draw({1.5*sqrt(2)},{-1.5*sqrt(6)},{2/3*sqrt(3)})--({-1.5*sqrt(2)},{-1.5*sqrt(6)},{2/3*sqrt(3)});

\draw[dotted]({1.5*sqrt(2)},{-1.5*sqrt(6)},{2/3*sqrt(3)})--({-1.5*sqrt(2)},{-1.5*sqrt(6)},{2/3*sqrt(3)})--({-1.5*sqrt(2)},{-1.5*sqrt(6)},{-2/3*sqrt(3)})--({1.5*sqrt(2)},{-1.5*sqrt(6)},{-2/3*sqrt(3)})--({1.5*sqrt(2)},{-1.5*sqrt(6)},{2/3*sqrt(3)});

\draw({sqrt(2)},{-sqrt(6)},{2*sqrt(3)})--({1.5*sqrt(2)},{-1.5*sqrt(6)},{2/3*sqrt(3)});
\draw({-sqrt(2)},{-sqrt(6)},{2*sqrt(3)})--({-1.5*sqrt(2)},{-1.5*sqrt(6)},{2/3*sqrt(3)});
\draw[dotted]({-sqrt(2)},{-sqrt(6)},{-2*sqrt(3)})--({-1.5*sqrt(2)},{-1.5*sqrt(6)},{-2/3*sqrt(3)});
\draw[dotted]({sqrt(2)},{-sqrt(6)},{-2*sqrt(3)})--({1.5*sqrt(2)},{-1.5*sqrt(6)},{-2/3*sqrt(3)});


\draw({1.5*sqrt(2)},{1.5*sqrt(6)},{2/3*sqrt(3)})--({3*sqrt(2)},0,{2/3*sqrt(3)})--({3*sqrt(2)},0,{-2/3*sqrt(3)})--({1.5*sqrt(2)},{1.5*sqrt(6)},{-2/3*sqrt(3)})--({1.5*sqrt(2)},{1.5*sqrt(6)},{2/3*sqrt(3)});

\draw({1.5*sqrt(2)},{-1.5*sqrt(6)},{2/3*sqrt(3)})--({3*sqrt(2)},0,{2/3*sqrt(3)})--({3*sqrt(2)},0,{-2/3*sqrt(3)})--({1.5*sqrt(2)},{-1.5*sqrt(6)},{-2/3*sqrt(3)})--({1.5*sqrt(2)},{-1.5*sqrt(6)},{2/3*sqrt(3)});

\draw({-1.5*sqrt(2)},{-1.5*sqrt(6)},{2/3*sqrt(3)})--({-3*sqrt(2)},0,{2/3*sqrt(3)});
\draw[dotted]({-3*sqrt(2)},0,{2/3*sqrt(3)})--({-3*sqrt(2)},0,{-2/3*sqrt(3)})--({-1.5*sqrt(2)},{-1.5*sqrt(6)},{-2/3*sqrt(3)})--({-1.5*sqrt(2)},{-1.5*sqrt(6)},{2/3*sqrt(3)});

\draw({-1.5*sqrt(2)},{1.5*sqrt(6)},{2/3*sqrt(3)})--({-3*sqrt(2)},0,{2/3*sqrt(3)});

\draw[dotted]({-3*sqrt(2)},0,{2/3*sqrt(3)})--({-3*sqrt(2)},0,{-2/3*sqrt(3)})--({-1.5*sqrt(2)},{1.5*sqrt(6)},{-2/3*sqrt(3)})--({-1.5*sqrt(2)},{1.5*sqrt(6)},{2/3*sqrt(3)});

\draw({3*sqrt(2)},0,{2/3*sqrt(3)})--({2*sqrt(2)},0,{2*sqrt(3)})--({sqrt(2)},{sqrt(6)},{2*sqrt(3)})--({1.5*sqrt(2)},{1.5*sqrt(6)},{2/3*sqrt(3)})--({3*sqrt(2)},0,{2/3*sqrt(3)});

\draw[dotted]({3*sqrt(2)},0,{-2/3*sqrt(3)})--({2*sqrt(2)},0,{-2*sqrt(3)});

\draw[dotted]({-3*sqrt(2)},0,{-2/3*sqrt(3)})--({-2*sqrt(2)},0,{-2*sqrt(3)});

\draw({-3*sqrt(2)},0,{2/3*sqrt(3)})--({-2*sqrt(2)},0,{2*sqrt(3)});

\end{scope}

\end{tikzpicture}
\caption{The Wulff Crystal of the FCC-lattice on the left and HCP-lattice on the right.}
\label{fig:WFCC}
\end{figure}

%
%
%

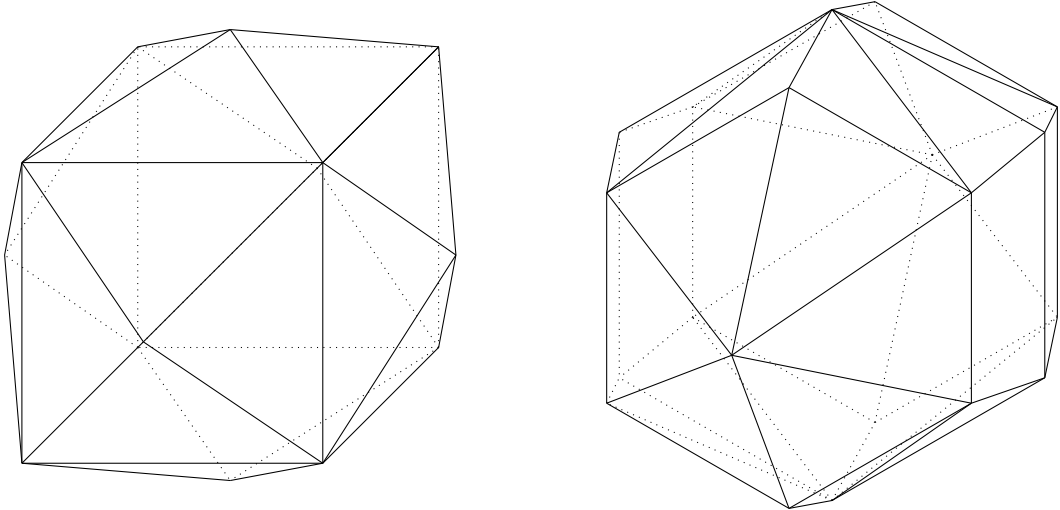
\begin{figure}
\begin{tikzpicture}

\draw[dotted](2,-2,-2)--(-2,-2,-2);
\draw[dotted](2,2,-2)--(-2,2,-2);

\draw[dotted](-2,-2,-2)--(-2,2,-2);

\draw[dotted](2,-2,-2)--(2,2,-2);

\foreach \j in {-1,1}{
\foreach \k in {-1,1}{

\draw(\j*2,2,\k*2)--(0,3,0);

\draw(2,\j*2,\k*2)--(3,0,0);

\draw[dotted](\j*2,\k*2,-2)--(0,0,-3);
\draw(\j*2,\k*2,2)--(0,0,3);

}}
\foreach \k in {-1,1}{

\draw(-2,\k*2,2)--(-3,0,0);
\draw[dotted](-2,\k*2,-2)--(-3,0,0);

\draw(\k*2,-2,2)--(0,-3,0);

\draw[dotted](\k*2,-2,-2)--(0,-3,0);

\draw(2,\k*2,2)--(-2,2*\k,2);

\draw(2,\k*2,2)--(2,2*\k,-2);

\draw(\k*2,2,2)--(2*\k,2,-2);

\draw(\k*2,2,2)--(2*\k,-2,2);

}

\begin{scope}[shift={(8,0)},scale=12]

\draw(0,0,{sqrt(3)/6})-- (0,{2*sqrt(6)/21},{sqrt(3)/14});
\draw(0,0,{sqrt(3)/6})-- (0,{-2*sqrt(6)/21},{sqrt(3)/14});


\draw[dotted](0,0,{-sqrt(3)/6})-- (0,{2*sqrt(6)/21},{-sqrt(3)/14});
\draw[dotted](0,0,{-sqrt(3)/6})-- (0,{-2*sqrt(6)/21},{-sqrt(3)/14});


\draw(0,0,{sqrt(3)/6})--({sqrt(2)/7},{sqrt(6)/21},{sqrt(3)/14});
\draw(0,0,{sqrt(3)/6})--({sqrt(2)/7},{-sqrt(6)/21},{sqrt(3)/14});
\draw(0,0,{sqrt(3)/6})--({-sqrt(2)/7},{sqrt(6)/21},{sqrt(3)/14});
\draw(0,0,{sqrt(3)/6})--({-sqrt(2)/7},{-sqrt(6)/21},{sqrt(3)/14});


\draw[dotted](0,0,{-sqrt(3)/6})--({sqrt(2)/7},{sqrt(6)/21},{-sqrt(3)/14});
\draw[dotted](0,0,{-sqrt(3)/6})--({sqrt(2)/7},{-sqrt(6)/21},{-sqrt(3)/14});
\draw[dotted](0,0,{-sqrt(3)/6})--({-sqrt(2)/7},{sqrt(6)/21},{-sqrt(3)/14});
\draw[dotted](0,0,{-sqrt(3)/6})--({-sqrt(2)/7},{-sqrt(6)/21},{-sqrt(3)/14});


\draw(0,{sqrt(6)/9},0)--({sqrt(2)/7},{sqrt(6)/21},{sqrt(3)/14});
\draw(0,{sqrt(6)/9},0)--({sqrt(2)/7},{sqrt(6)/21},{-sqrt(3)/14});
\draw(0,{sqrt(6)/9},0)--({-sqrt(2)/7},{sqrt(6)/21},{sqrt(3)/14});
\draw[dotted](0,{sqrt(6)/9},0)--({-sqrt(2)/7},{sqrt(6)/21},{-sqrt(3)/14});


\draw(0,{-sqrt(6)/9},0)--({sqrt(2)/7},{-sqrt(6)/21},{sqrt(3)/14});
\draw[dotted](0,{-sqrt(6)/9},0)--({sqrt(2)/7},{-sqrt(6)/21},{-sqrt(3)/14});
\draw[dotted](0,{-sqrt(6)/9},0)--({-sqrt(2)/7},{-sqrt(6)/21},{sqrt(3)/14});
\draw[dotted](0,{-sqrt(6)/9},0)--({-sqrt(2)/7},{-sqrt(6)/21},{-sqrt(3)/14});


\draw(0,{sqrt(6)/9},0)--(0,{2*sqrt(6)/21},{sqrt(3)/14});
\draw(0,{sqrt(6)/9},0)--(0,{2*sqrt(6)/21},{-sqrt(3)/14});


\draw(0,{-sqrt(6)/9},0)--(0,{-2*sqrt(6)/21},{sqrt(3)/14});
\draw[dotted](0,{-sqrt(6)/9},0)--(0,{-2*sqrt(6)/21},{-sqrt(3)/14});


\draw(0,{sqrt(6)/9},0)--({sqrt(2)/6},{sqrt(6)/18},0);
\draw(0,{sqrt(6)/9},0)--({-sqrt(2)/6},{sqrt(6)/18},0);


\draw(0,{-sqrt(6)/9},0)--({sqrt(2)/6},{-sqrt(6)/18},0);
\draw[dotted](0,{-sqrt(6)/9},0)--({-sqrt(2)/6},{-sqrt(6)/18},0);


\draw({sqrt(2)/7},{sqrt(6)/21},{sqrt(3)/14})--(0,{2*sqrt(6)/21},{sqrt(3)/14});
\draw({-sqrt(2)/7},{sqrt(6)/21},{sqrt(3)/14})--(0,{2*sqrt(6)/21},{sqrt(3)/14});


\draw({sqrt(2)/7},{-sqrt(6)/21},{sqrt(3)/14})--(0,{-2*sqrt(6)/21},{sqrt(3)/14});
\draw({-sqrt(2)/7},{-sqrt(6)/21},{sqrt(3)/14})--(0,{-2*sqrt(6)/21},{sqrt(3)/14});

\draw({sqrt(2)/7},{sqrt(6)/21},{-sqrt(3)/14})--(0,{2*sqrt(6)/21},-{sqrt(3)/14});
\draw[dotted]({-sqrt(2)/7},{sqrt(6)/21},{-sqrt(3)/14})--(0,{2*sqrt(6)/21},{-sqrt(3)/14});

\draw[dotted]({sqrt(2)/7},{-sqrt(6)/21},{-sqrt(3)/14})--(0,{-2*sqrt(6)/21},{-sqrt(3)/14});
\draw[dotted]({-sqrt(2)/7},{-sqrt(6)/21},{-sqrt(3)/14})--(0,{-2*sqrt(6)/21},{-sqrt(3)/14});


\draw({sqrt(2)/7},{sqrt(6)/21},{sqrt(3)/14})--({sqrt(2)/6},{sqrt(6)/18},0);
\draw({sqrt(2)/7},{sqrt(6)/21},{-sqrt(3)/14})--({sqrt(2)/6},{sqrt(6)/18},0);


\draw({-sqrt(2)/7},{sqrt(6)/21},{sqrt(3)/14})--({-sqrt(2)/6},{sqrt(6)/18},0);
\draw[dotted]({-sqrt(2)/7},{sqrt(6)/21},{-sqrt(3)/14})--({-sqrt(2)/6},{sqrt(6)/18},0);

\draw({sqrt(2)/7},{-sqrt(6)/21},{sqrt(3)/14})--({sqrt(2)/6},{-sqrt(6)/18},0);
\draw({sqrt(2)/7},{-sqrt(6)/21},{-sqrt(3)/14})--({sqrt(2)/6},{-sqrt(6)/18},0);

\draw[dotted]({-sqrt(2)/7},{-sqrt(6)/21},{sqrt(3)/14})--({-sqrt(2)/6},{-sqrt(6)/18},0);
\draw[dotted]({-sqrt(2)/7},{-sqrt(6)/21},{-sqrt(3)/14})--({-sqrt(2)/6},{-sqrt(6)/18},0);


\draw({sqrt(2)/7},{sqrt(6)/21},{sqrt(3)/14})--({sqrt(2)/7},{-sqrt(6)/21},{sqrt(3)/14});

\draw({-sqrt(2)/7},{sqrt(6)/21},{sqrt(3)/14})--({-sqrt(2)/7},{-sqrt(6)/21},{sqrt(3)/14});

\draw({sqrt(2)/7},{sqrt(6)/21},{-sqrt(3)/14})--({sqrt(2)/7},{-sqrt(6)/21},{-sqrt(3)/14});

\draw[dotted]({-sqrt(2)/7},{sqrt(6)/21},{-sqrt(3)/14})--({-sqrt(2)/7},{-sqrt(6)/21},{-sqrt(3)/14});


\draw({sqrt(2)/6},{sqrt(6)/18},0)--({sqrt(2)/6},{-sqrt(6)/18},0);

\draw[dotted]({-sqrt(2)/6},{sqrt(6)/18},0)--({-sqrt(2)/6},{-sqrt(6)/18},0);

\end{scope}

\end{tikzpicture}
\caption{The sublevel set $\{\varphi_{\mathrm{FCC}}\leq 1\}$ on the left and the sublevel set $\{\varphi_{\mathrm{HCP}}\leq 1\}$ on the right.}
\label{fig:FCCleq1}
\end{figure}

\subsection*{Explicit formula of the surface energy densities}
Taking advantage of the representation formula \eqref{eq:propvarphi} stated in Proposition \eqref{prop:representation}, we provide the explicit formulas of the surface energy density $\varphi_{{\mathcal L}_{\mathrm{FCC}}}$ and $\varphi_{{\mathcal L}_{\mathrm{HCP}}}$. \GGG Their sublevel sets are depicted in Figure~\ref{fig:FCCleq1}. \EEE With the two explicit formulas at hand we can calculate the polar  functions  of both densities, the associated Wulff shapes and the surface energy per unit volume of both the $\mathrm{FCC}$ and $\mathrm{HCP}$ crystals.  In order not to overburden the reader with notation, we write $\varphi_{\mathrm{FCC}}$ and $\varphi_{\mathrm{HCP}}$ for $\varphi_{{\mathcal L}_{\mathrm{FCC}}}$ and $\varphi_{{\mathcal L}_{\mathrm{HCP}}}$ as well as $W_{\mathrm{FCC}}$ and $W_{\mathrm{HCP}}$ instead of $W_{\varphi_{\mathcal{L}_\mathrm{FCC}}}$ and $W_{\varphi_{\mathcal{L}_\mathrm{HCP}}}$. \GGG  $W_{\mathrm{FCC}}$ and $W_{\mathrm{HCP}}$  are depicted in Figure~\ref{fig:WFCC}. \EEE

\begin{proposition} \label{prop:main1} The following formulas hold true. 
\begin{align}\label{eq:varphiFCC}
\varphi_{\mathrm{FCC}}(\nu) = |\nu_1+\nu_2| + |\nu_1+\nu_3|+ |\nu_2+\nu_3|+ |\nu_1-\nu_2|+|\nu_1-\nu_3|+|\nu_2-\nu_3|\,,
\end{align}
and
\begin{align}\label{eq:FCCdual}
\varphi_{\mathrm{FCC}}^\circ(\zeta) = \max\left\{\frac{1}{4}\|\zeta\|_\infty,\frac{1}{6}\|\zeta\|_1\right\}\,.
\end{align}
In particular, $W_{\mathrm{FCC}}$ is a  truncated octahedron and its surface energy per unit volume is
\begin{align}\label{eq:surfaceFCC}
|W_{\mathrm{FCC}}|^{-2/3}\int_{\partial^* W_{\mathrm{FCC}}} \varphi_{\mathrm{FCC}}(\nu)\,\mathrm{d}\mathcal{H}^2 =3 \cdot 2^2 \cdot 2^{2/3} \,.
\end{align}
\end{proposition}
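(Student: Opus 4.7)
The plan is to apply the cell formula of Proposition~\ref{prop:representation} to obtain the explicit form \eqref{eq:varphiFCC}, then identify $\varphi_{\mathrm{FCC}}$ as the support function of a truncated octahedron in order to derive the polar function \eqref{eq:FCCdual} and the Wulff shape $W_{\mathrm{FCC}}$, and finally use a standard divergence-theorem identity to reduce \eqref{eq:surfaceFCC} to a volume computation. For the first step, since the periodicity lattice of $\mathcal{L}_{\mathrm{FCC}}$ coincides with $\mathcal{L}_{\mathrm{FCC}}$ itself, the cell $T_{\mathrm{FCC}}$ contains a single lattice point (say the origin), and any admissible $u$ in \eqref{eq:propvarphi} must satisfy $u(t)=u(0)+\langle\nu,t\rangle$ for every $t\in\mathcal{L}_{\mathrm{FCC}}$; thus $u(x)=\langle\nu,x\rangle$ is the essentially unique competitor. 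Plugging this choice into $F_{\mathrm{FCC}}(u,T_{\mathrm{FCC}})$, pairing opposite neighbours in $\mathcal{N}_{\mathrm{FCC}}(0)$, substituting the explicit components of the $b_i$ from \eqref{def:vectors2}, and dividing by $|T_{\mathrm{FCC}}|=|\det(b_1,b_2,b_3)|=1/\sqrt{2}$ produces \eqref{eq:varphiFCC}.

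Next I would observe that the right-hand side of \eqref{eq:varphiFCC} is invariant under the sign changes $\nu_i\mapsto-\nu_i$ and under permutations of the $\nu_i$, so it suffices to evaluate $\varphi_{\mathrm{FCC}}$ under the normalization $\nu_1\geq\nu_2\geq\nu_3\geq 0$, where every absolute value drops and the six terms collapse to
\[
\varphi_{\mathrm{FCC}}(\nu) \;=\; 4\nu_1+2\nu_2 \;=\; 4|\nu_{(1)}|+2|\nu_{(2)}|,
\]
with $|\nu_{(1)}|\geq|\nu_{(2)}|\geq|\nu_{(3)}|$ the decreasing rearrangement of $|\nu_1|,|\nu_2|,|\nu_3|$. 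A one-line linear-programming argument over the polytope $K=\{\zeta:\|\zeta\|_\infty\leq 4,\,\|\zeta\|_1\leq 6\}$, whose vertices are all signed permutations of $(0,2,4)$, shows $\sup_{\zeta\in K}\langle\nu,\zeta\rangle=4|\nu_{(1)}|+2|\nu_{(2)}|$. Hence $\varphi_{\mathrm{FCC}}$ is the support function of $K$, and convex duality yields $W_{\mathrm{FCC}}=K$; formula \eqref{eq:FCCdual} then follows at once from the Minkowski-functional identity $\varphi_{\mathrm{FCC}}^\circ(\zeta)=\inf\{t>0:\zeta/t\in W_{\mathrm{FCC}}\}$, and the description of $K$ as a truncated octahedron with $6$ square and $8$ hexagonal faces is classical.

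For the last step, the divergence theorem applied to the identity vector field on the convex body $W_{\mathrm{FCC}}$, together with the fact that $\langle x,\nu(x)\rangle=\varphi_{\mathrm{FCC}}(\nu(x))$ for $\mathcal{H}^2$-a.e.~$x\in\partial W_{\mathrm{FCC}}$ (because $\varphi_{\mathrm{FCC}}$ is the support function of $W_{\mathrm{FCC}}$), gives $\int_{\partial^*W_{\mathrm{FCC}}}\varphi_{\mathrm{FCC}}\,\mathrm{d}\mathcal{H}^2=3|W_{\mathrm{FCC}}|$. I would then compute $|W_{\mathrm{FCC}}|$ as the volume $12^3/3!=288$ of the $\ell^1$-ball $\{\|\zeta\|_1\leq 6\}$ minus the six congruent corner regions $\{|\zeta_i|>4\}\cap\{\|\zeta\|_1\leq 6\}$ of volume $\int_0^2 2s^2\,\mathrm{d}s=16/3$ each, obtaining $|W_{\mathrm{FCC}}|=288-32=2^8$; substituting back yields $|W_{\mathrm{FCC}}|^{-2/3}\cdot 3|W_{\mathrm{FCC}}|=3\cdot 2^{8/3}=3\cdot 2^2\cdot 2^{2/3}$, which is \eqref{eq:surfaceFCC}. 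The main obstacle is the combinatorial collapse of the sum of six absolute values to $4|\nu_{(1)}|+2|\nu_{(2)}|$: once this identification is secured, the recognition of $W_{\mathrm{FCC}}$ as a truncated octahedron and the subsequent volume and surface-energy computations are essentially bookkeeping.
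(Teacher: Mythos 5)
Your computation of $\varphi_{\mathrm{FCC}}$ is the same as Step~1 of the paper's proof, but your treatment of the duality and the surface-energy-per-unit-volume is genuinely different and, I think, cleaner. In Step~2 the paper reduces by symmetry to a two-dimensional linear program on the simplex $\{0\le\nu_2\le\nu_3,\ 4\nu_3+2\nu_2\le1\}$ and reads off the maximum at the two extreme points; you instead collapse the six absolute values to $4|\nu_{(1)}|+2|\nu_{(2)}|$, recognize this as the support function of the truncated octahedron $K=\{\|\zeta\|_\infty\le4\}\cap\{\|\zeta\|_1\le6\}$ (the polytope whose vertices are the signed permutations of $(4,2,0)$), and conclude $W_{\mathrm{FCC}}=K$ and hence $\varphi_{\mathrm{FCC}}^\circ=\max\{\tfrac14\|\cdot\|_\infty,\tfrac16\|\cdot\|_1\}$ from the gauge of an intersection. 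Both are correct; your version makes the truncated-octahedron structure visible immediately rather than after the fact. In Step~3 the divergence is larger: the paper decomposes $\partial W_{\mathrm{FCC}}$ into its $6$ square and $8$ hexagonal faces, evaluates $\int_{\partial W}\varphi(\nu)\,\mathrm{d}\mathcal{H}^2$ face by face, and then computes $|W_{\mathrm{FCC}}|$ by a separate coarea argument. You short-circuit all of that with the classical identity $\int_{\partial W}h_W(\nu)\,\mathrm{d}\mathcal{H}^{n-1}=n|W|$ (divergence theorem applied to the position field, using that $\langle x,\nu(x)\rangle=h_W(\nu(x))$ for $\mathcal{H}^{n-1}$-a.e.\ boundary point of a convex body), which reduces \eqref{eq:surfaceFCC} to computing $|W_{\mathrm{FCC}}|=3|W_{\mathrm{FCC}}|^{1/3}$ up to the trivial algebra and hence to a single volume: $|W_{\mathrm{FCC}}|=\tfrac{12^3}{6}-6\int_0^2 2s^2\,\mathrm{d}s=288-32=2^8$. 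This buys a noticeably shorter and more conceptual proof of the third formula; the paper's face-by-face computation has the mild virtue of exhibiting the individual face areas, which are reused in the HCP case, but for the FCC proposition alone your route is leaner.
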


\begin{proposition} \label{prop:main2} The following formulas hold true. 
\begin{align}\label{eq:varphiHCP}
\begin{split}
\varphi_{\mathrm{HCP}}(\nu) &= \sqrt{2}\left(|\langle e_1,\nu\rangle|+ |\langle e_2,\nu\rangle|+ |\langle e_1-e_2,\nu\rangle|\right) + \frac{1}{\sqrt{2}}|\langle e_3,\nu\rangle| \\&\quad+\sqrt{2}\max\left\{|\langle e_1,\nu\rangle|,|\langle e_2,\nu\rangle|,|\langle e_3,\nu\rangle|,|\langle e_1-e_2,\nu\rangle|\right\}\,,
\end{split}
\end{align}
and
\begin{align}\label{eq:HCPdual}
\begin{split}
\varphi^\circ_{\mathrm{HCP}}(\zeta) =\max\Big\{ &\frac{2}{7\sqrt{2}}\left(|\zeta_1| +\frac{1}{\sqrt{3}}|\zeta_2| + \frac{3}{2\sqrt{6}}|\zeta_3|\right),\frac{1}{2\sqrt{3}}|\zeta_3|,\\&   \frac{2}{3\sqrt{6}}|\zeta_2|, \frac{4}{7\sqrt{6}}|\zeta_2|+\frac{3}{14\sqrt{3}}|\zeta_3|, \frac{1}{3\sqrt{2}}\left(|\zeta_1| +\frac{1}{\sqrt{3}}|\zeta_2|\right)  \Big\} \,.
\end{split}
\end{align}
In particular, $W_{\mathrm{HCP}}$ is a truncated elongated hexagonal bipyramid and its surface energy per unit volume is
\begin{align}\label{eq:surfaceHCP}
|W_{\mathrm{HCP}}|^{-2/3}\int_{\partial^* W_{\mathrm{HCP}}} \varphi_{\mathrm{HCP}}(\nu)\,\mathrm{d}\mathcal{H}^2 = 3\cdot 2^{2/3} \cdot 65^{1/3}\,.
\end{align}
\end{proposition}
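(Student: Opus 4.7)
The plan is to invoke Proposition~\ref{prop:representation} in order to reduce the computation of $\varphi_{\mathrm{HCP}}(\nu)$ to a one-parameter convex optimization, solve that optimization in closed form, and then derive \eqref{eq:HCPdual}, the Wulff shape and \eqref{eq:surfaceHCP} by convex-geometric arguments. Since $T_{\mathrm{HCP}}\cap\mathcal{L}_{\mathrm{HCP}}=\{0,v_1\}$, any admissible competitor in \eqref{eq:propvarphi} has the form $u(x)=\langle\nu,x\rangle+v(x)$ with $v$ being $T_{\mathrm{HCP}}$-periodic; and up to an additive constant, $v$ is determined by the single scalar $t:=v(v_1)-v(0)$. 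Splitting the interactions entering $F_{\mathrm{HCP}}(u,T_{\mathrm{HCP}})$ into the six in-plane bonds (displacements $\pm e_1,\pm e_2,\pm(e_1-e_2)$, connecting points of the same sublattice, along which $v$ does not contribute) and the six out-of-plane bonds (displacements $w_i\in\{v_1,v_1-e_1,v_1-e_2,v_1-e_3,v_1-e_1-e_3,v_1-e_2-e_3\}$, across which $v$ jumps by $t$), I would compute
\[
F_{\mathrm{HCP}}(u,T_{\mathrm{HCP}}) = 2\bigl(|\langle e_1,\nu\rangle|+|\langle e_2,\nu\rangle|+|\langle e_1-e_2,\nu\rangle|\bigr)+\sum_{i=1}^{6}|\langle w_i,\nu\rangle+t|.
\]

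Setting $\alpha:=\langle e_1,\nu\rangle$, $\beta:=\langle e_2,\nu\rangle$, $\gamma:=\langle e_3,\nu\rangle$, a direct computation shows that the six values $\langle w_i,\nu\rangle$ split as the three pairs $\{p\pm\gamma/2\},\{q\pm\gamma/2\},\{r\pm\gamma/2\}$ with $p+q+r=0$ and, crucially, $\{|p-q|,|p-r|,|q-r|\}=\{|\alpha|,|\beta|,|\alpha-\beta|\}$. Using the elementary identity $|a-\gamma/2|+|a+\gamma/2|=\max(2|a|,|\gamma|)$, the minimization over $t$ reduces to the one-dimensional convex piecewise-linear problem
\[
M(\nu) := \min_{t\in\mathbb{R}}\sum_{j\in\{p,q,r\}}\max\bigl(2|j+t|,|\gamma|\bigr).
\]
Each summand has slope $\pm 2$ outside a flat region of length $|\gamma|$ centered at $-j$, so the minimum is achieved on the interval of $t$'s where exactly one summand lies in its flat region while the other two lie in opposite ramps. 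A short case analysis distinguishing $K_{\alpha\beta}:=\max\{|\alpha|,|\beta|,|\alpha-\beta|\}$ versus $|\gamma|$, and within the nontrivial case $K_{\alpha\beta}>|\gamma|$ the sizes of the two gaps of $\{p,q,r\}$ relative to $|\gamma|$, gives $M(\nu)=|\gamma|+2\max\{K_{\alpha\beta},|\gamma|\}$. Dividing by $|T_{\mathrm{HCP}}|=\sqrt{2}$ produces \eqref{eq:varphiHCP}. This case analysis is the most delicate step of the proof, since the optimal $t$ depends non-trivially on which pair among $\{p,q,r\}$ is closest and is not always attained at the median of $-p,-q,-r$ when $\gamma\neq 0$.

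Formula \eqref{eq:varphiHCP} identifies $\varphi_{\mathrm{HCP}}$ as the sum of the support functions of the hexagonal zonotope $H:=\sqrt{2}\bigl([-e_1,e_1]\oplus[-e_2,e_2]\oplus[-(e_1-e_2),e_1-e_2]\bigr)$, the vertical segment $L:=[-e_3/\sqrt{2},e_3/\sqrt{2}]$, and the hexagonal bipyramid $O:=\sqrt{2}\,\mathrm{conv}\{\pm e_1,\pm e_2,\pm e_3,\pm(e_1-e_2)\}$, hence $W_{\mathrm{HCP}}=H+L+O$, realizing the truncated elongated hexagonal bipyramid. To derive \eqref{eq:HCPdual} I would enumerate the facet classes of this Minkowski sum under its $D_{6}\times\{\pm 1\}$ symmetry: each summand contributes the face attaining its support in a given direction, and by combining them one obtains the facets of $W_{\mathrm{HCP}}$ as Minkowski sums of hexagons, rectangles and trapezia. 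Computing one outward unit normal and the corresponding support distance for each representative class and rearranging the $D_6$-symmetric contributions into absolute values of the Cartesian coordinates of $\zeta$ produces the five arguments of the maximum in \eqref{eq:HCPdual}.

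Finally, the identity $\varphi_{\mathrm{HCP}}(\nu_x)=\langle x,\nu_x\rangle$ for $x\in\partial W_{\mathrm{HCP}}$, combined with the divergence theorem applied to the field $x\mapsto x$, yields $\int_{\partial^* W_{\mathrm{HCP}}}\varphi_{\mathrm{HCP}}(\nu)\,\mathrm{d}\mathcal{H}^2=3|W_{\mathrm{HCP}}|$, so the surface energy per unit volume equals $3|W_{\mathrm{HCP}}|^{1/3}$. To compute $|W_{\mathrm{HCP}}|$ I would slice $W_{\mathrm{HCP}}$ by horizontal planes: since $L$ is vertical, $H$ lies in the horizontal plane, and the horizontal slices of $O$ are regular hexagons aligned with $H$, each slice of $W_{\mathrm{HCP}}$ is a regular hexagon whose circumradius is piecewise linear in $z$ (constant over a central prismatic interval of height $4\sqrt{3}/3$ and linearly decreasing on two pyramidal caps). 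Integrating $\tfrac{3\sqrt{3}}{2}r(z)^{2}$ over $z\in[-2\sqrt{3},2\sqrt{3}]$ yields $|W_{\mathrm{HCP}}|=260=4\cdot 65$, so that $3|W_{\mathrm{HCP}}|^{1/3}=3\cdot 2^{2/3}\cdot 65^{1/3}$, which is \eqref{eq:surfaceHCP}.
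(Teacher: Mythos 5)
Your proposal is correct and reaches all three claimed formulas, but it takes a genuinely different route from the paper at each of the three steps, so let me compare.

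For \eqref{eq:varphiHCP} both proofs reduce, via Proposition~\ref{prop:representation}, to a one-parameter minimization over the sublattice offset $t$. The paper minimizes $g_\nu(t)=\sum_{i}|t-s_i|$ by noting that a piecewise-affine coercive function attains its minimum at a kink, explicitly evaluating $g_\nu$ at the six kinks, and then eliminating cases using the hexagonal rotation symmetry $R_k$ and a sign reduction $\nu\mapsto-\nu$. You instead factor the six out-of-plane bonds into three pairs $\{j\pm\gamma/2\}$ with $j\in\{p,q,r\}$, $p+q+r=0$, and invoke $|a-\gamma/2|+|a+\gamma/2|=\max(2|a|,|\gamma|)$ to turn the problem into minimizing $\sum_j\max(2|j+t|,|\gamma|)$. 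Your identification $\{|p-q|,|p-r|,|q-r|\}=\{|\alpha|,|\beta|,|\alpha-\beta|\}$ is correct, and so is the resulting formula $M(\nu)=|\gamma|+2\max\{K_{\alpha\beta},|\gamma|\}$; this is a cleaner, more structural derivation than the paper's (the only caveat is that your heuristic ``exactly one summand in its flat region, the other two on opposite ramps'' describes only the generic optimal configuration — degenerate configurations of $\{p,q,r\}$ need a word, but the final formula is unaffected). For \eqref{eq:HCPdual} the paper solves the linear program $\max\{\langle\nu,\zeta\rangle:\varphi_{\mathrm{HCP}}(\nu)\le1\}$ by first restricting to the positive orthant via symmetries and then enumerating the active-constraint cases (a)--(e) at non-differentiability loci of $\varphi_{\mathrm{HCP}}$. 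Your Minkowski decomposition $W_{\mathrm{HCP}}=H+L+O$, obtained by reading $\varphi_{\mathrm{HCP}}$ as a sum of support functions, is a nice structural observation that the paper never makes explicit; it immediately explains the ``truncated elongated hexagonal bipyramid'' description, but the actual enumeration of the facet normals and support distances of $H+L+O$ (which is where the five arguments of the max come from) is of comparable length to the paper's case analysis and is left mostly implicit in your sketch. For \eqref{eq:surfaceHCP} the two approaches diverge most: the paper computes $\int_{\partial^* W_{\mathrm{HCP}}}\varphi_{\mathrm{HCP}}\,\mathrm{d}\mathcal{H}^2=780$ by summing over all five facet classes \emph{and} computes $|W_{\mathrm{HCP}}|=260$ via the coarea formula; you instead invoke the Minkowski-type identity $\int_{\partial^* W_\varphi}\varphi(\nu)\,\mathrm{d}\mathcal{H}^{n-1}=n|W_\varphi|$ (from $\varphi(\nu(x))=\langle x,\nu(x)\rangle$ on $\partial W_\varphi$ and the divergence theorem), so only $|W_{\mathrm{HCP}}|$ needs to be computed, which you do by horizontal slicing. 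I checked the slicing: with circumradius $R(z)=2\sqrt2+\sqrt2$ on $|z|\le 2\sqrt3/3$ and decaying linearly to $2\sqrt2$ at $|z|=2\sqrt3$, one indeed gets $108+152=260$. This is shorter and conceptually cleaner than the paper's double brute-force computation, at the cost of invoking the Wulff-shape divergence identity, which the paper avoids.
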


\section{Proof of Proposition \ref{prop:main1} and Proposition \ref{prop:main2}} \label{sec:proofs}

In this section we prove Proposition \ref{prop:main1} and Proposition \ref{prop:main2}. To this end, we use Proposition \ref{prop:representation} to note that $\varphi_\mathcal{L}$ is given by \eqref{eq:propvarphi}.

\begin{proof}[Proof of Proposition \ref{prop:main1}] We divide the proof into several steps. First, we calculate $\varphi_{{\mathrm{FCC}}}$. Then, we calculate $\varphi^\circ_{\mathrm{FCC}}$. Lastly, we calculate \eqref{eq:surfaceFCC}. Recall \eqref{def:vectors2}. \\
\begin{step}{1}(Calculation of $\varphi_{\mathrm{FCC}}$)\BBB
We make use of Proposition \ref{prop:representation} in order to calculate $\varphi_{\mathrm{FCC}}$. First of all, owing to \eqref{def:periodicitycellFCC}, we note that 
\begin{align}\label{eq:VolumeFCC}
|T_{{\mathrm{FCC}}}|= \frac{1}{3}\sqrt{6} \cdot \frac{1}{2}\sqrt{3} = \frac{1}{2}\sqrt{2}\,.
\end{align}
Given $u \colon \mathcal{L}_{\mathrm{FCC}} \to \mathbb{R}$  such that $u(\cdot)-\langle\nu,\cdot\rangle$ is $T_{{\mathrm{FCC}}}$-periodic we have that $u(x+ b_i) = u(x)+\langle b_i,\nu\rangle$ for all $i=1,2,3$. Therefore, \BBB $u$ is an affine function of the form \EEE $u(x) = \langle x,\nu\rangle +c\, , x \in \mathcal{L}_{\mathrm{FCC}}$ for some $c\in \mathbb{R}$. Lastly, note that $\mathcal{L}_{\mathrm{FCC}} \cap T_{{\mathrm{FCC}}}= \{0\}$. Using \eqref{eq:propvarphi} and \eqref{eq:VolumeFCC}, we obtain \BBB
\begin{align*}
\varphi_{\mathrm{FCC}}(\nu) = \frac{1}{2}\sqrt{2} \sum_{\xi \in \mathcal{N}_{{\mathrm{FCC}}}} |u(\xi)-u(0)| = \frac{1}{2}\sqrt{2}\sum_{\xi \in \mathcal{N}_{{\mathrm{FCC}}}} |\langle\xi,\nu\rangle|\,.
\end{align*} \EEE
Employing now \eqref{eq:neighbourhoodFCC}, we obtain \eqref{eq:varphiFCC}.\EEE
\end{step}\\
\BBB \begin{step}{2}(Calculation of $\varphi^\circ_{\mathrm{FCC}}$)
 Let $G$ be the isometry group on $\mathbb{R}^3$ whose elements $g \in G$ are the linear isometries $g \colon \mathbb{R}^3 \to \mathbb{R}^3$ defined by $g(\nu_1,\nu_2,\nu_3) = (b_1 \nu_{\pi_1},b_2 \nu_{\pi_2},b_3 \nu_{\pi_3})$ where $\pi$ is a permutation on $\{1,2,3\}$ and $b_i \in \{-1,1\}$. Since $\varphi_{\mathrm{FCC}}(g(\nu)) = \varphi_{\mathrm{FCC}}(\nu)$ for all $g \in G$, $\nu \in \mathbb{R}^3$, we infer that
 \begin{align*}
 \varphi^\circ_{\mathrm{FCC}}(\zeta) := \underset{\varphi_{\mathrm{FCC}}(\nu)\leq 1}{\max_{\nu \in \mathbb{R}^3 }} \langle \zeta,\nu\rangle  =\underset{\varphi_{\mathrm{FCC}}(g^{-1}(\nu))\leq 1}{\max_{\nu \in \mathbb{R}^3 }} \langle \zeta,g^{-1}(\nu)\rangle =\underset{\varphi_{\mathrm{FCC}}(\nu)\leq 1}{\max_{\nu \in \mathbb{R}^3 }} \langle g(\zeta),\nu\rangle = \varphi_{\mathrm{FCC}}^\circ(g(\zeta))\,,
 \end{align*}
 also relying on the property $g^T=g^{-1}$. Therefore, we can assume that $ 0\leq \zeta_1\leq \zeta_2\leq \zeta_3$. Thus, if we want to maximize $\langle\zeta,\nu\rangle$ under the condition $\varphi_{\mathrm{FCC}}(\nu)\leq 1$, we can as well assume that $0\leq \nu_1\leq \nu_2\leq \nu_3$, so that condition $\varphi_{\mathrm{FCC}}(\nu)\leq 1$ becomes equivalent to
 \begin{align*}
 4\nu_3 + 2\nu_2 \leq 1\,.
 \end{align*}
 Therefore, noting that any linear function attains its maximum at the extreme points of a convex set and consulting Figure~\ref{fig:optimization}, we obtain
 \begin{align*}
\underset{4\nu_3+2\nu_2\leq 1}{ \max_{0\leq \nu_1\leq \nu_2\leq \nu_3}} \zeta_1 \nu_1 + \zeta_2 \nu_2 + \zeta_3\nu_3 = \underset{4\nu_3+2\nu_2\leq 1}{ \max_{0\leq\nu_2\leq \nu_3}} (\zeta_1+\zeta_2)\nu_2 +\zeta_3\nu_3 &= \max\left\{\frac{1}{4}\zeta_3, \frac{1}{6}(\zeta_1+\zeta_2+\zeta_3)\right\} \\&= \max\left\{\frac{1}{4}\|\zeta\|_\infty, \frac{1}{6}\|\zeta\|_1\right\}\,.
 \end{align*}
 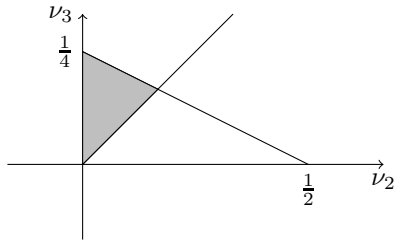
\begin{figure}[htp]
 \begin{tikzpicture}
 \draw[->](0,-1)--++(0,3);
 \draw(0,2) node[anchor=east]{$\nu_3$};
  \draw(0,1.5) node[anchor=east]{$\frac{1}{4}$};
 \draw[->](-1,0)--++(5,0);
 \draw(4,0) node[anchor=north]{$\nu_2$};
  \draw(3,0) node[anchor=north]{$\frac{1}{2}$};
 \draw(0,1.5)--(3,0);
 \draw(0,0)--++(2,2);
 \draw[fill=gray!50!white](0,0)--(0,1.5)--(1,1)--(0,0);
 \end{tikzpicture}
 \caption{The set $\{0\leq \nu_2\leq \nu_3\} \cap \{4\nu_3 +2\nu_1\leq 1\}$ depicted in gray.}
 \label{fig:optimization}
 \end{figure}
This is the desired formula \eqref{eq:FCCdual} and concludes Step~2.
\end{step}\\ \EEE
\begin{step}{3}(Calculation of \eqref{eq:surfaceFCC}) 
Note that the set $W_{\varphi_{\mathrm{FCC}}}$ is the intersection of a cube $\|\zeta\|_\infty\leq 4$ with an octahedron $\|\zeta\|_1 \leq 6$, see Fig.~\ref{fig:WFCC}. Its boundary has $6$ square faces, where $\nu = \pm (1,0,0)$ (resp.~$\pm(0,1,0)$ or $\pm(0,0,1)$)  and $8$ hexagonal faces, where $\nu = \frac{1}{\sqrt{3}}(\pm 1 \pm 1 \pm 1)$. First, we consider the set where $\nu=(1,0,0)$, the other cases where $\varphi_{\mathrm{FCC}}^\circ(\zeta)=\frac{1}{4}\|\zeta\|_\infty=1$ contributing with the same value. The square is given by 
\begin{align*}
S_1^+=\{(4,\zeta_2,\zeta_3) \colon |\zeta_2|+|\zeta_3|\leq 2\} = \left\{\frac{1}{4}\|\zeta\|_\infty= \frac{1}{4}\zeta_1=1 \right\}\cap \left\{\frac{1}{6}\|\zeta\|_1 \leq 1\right\}\,.
\end{align*}
Therefore, $\mathcal{H}^2(S_1^+)= 8$ and $\varphi_{\mathrm{FCC}}((1,0,0)) = 4$. Similarly, we obtain the same measure and value of $\varphi_{\mathrm{FCC}}$ for the other squares $S_1^-$, $S_2^\pm$,$S_3^\pm$, where $\nu$ is (up to sign) one of the coordinate unit vectors. Hence, 
\begin{align}\label{eq:intSi}
\sum_{i=1}^3\int_{S_i^+} \varphi_{\mathrm{FCC}}(\nu)\,\mathrm{d}\mathcal{H}^2+ \sum_{i=1}^3\int_{S_i^-} \varphi_{\mathrm{FCC}}(\nu)\,\mathrm{d}\mathcal{H}^2 = 6\cdot 8\cdot 4 = 3\cdot 2^6\,.
\end{align}
Next, we consider the contribution of a hexagon. We consider the hexagon contained in the set $\zeta_i \geq 0$ for all $i$. Here, we have $\nu=\frac{1}{\sqrt{3}}(1,1,1)$ and $\varphi_{\mathrm{FCC}}(\nu)=2\sqrt{3}$. The $6$ sides of the hexagon have all side-length $2\sqrt{2}$. To see this, there are sides of the form $(4,2-t,t), t \in [0,2]$ or $(4-t,0,2+t), t\in [0,2]$ and their permutations (up to identifying $t$ with $2-t$ in the first case and $4-t$ and $2+t$ in the second case). An equilateral hexagon $H$ of side-length $2\sqrt{2}$ satisfies $\mathcal{H}^2(H) = 12\sqrt{3}$. Labeling the hexagons by $H_i$, $i=0,\ldots,7$, we obtain
\begin{align}\label{eq:intHi}
\sum_{i=0}^7 \int_{H_i} \varphi_{\mathrm{FCC}}(\nu)\,\mathrm{d}\mathcal{H}^2 = 8 \cdot \mathcal{H}^2(H_i) \cdot \varphi_{\mathrm{FCC}}\left(\frac{1}{\sqrt{3}}(\pm 1, \pm 1,\pm 1)\right) = 8  \cdot 12\sqrt{3} \cdot 2\sqrt{3} = 3^2 \cdot 2^6\,.
\end{align}
Using \eqref{eq:intSi} and \eqref{eq:intHi}, we obtain
\begin{align}\label{eq:intWFCC}
\int_{\partial W_{\mathrm{FCC}}} \varphi_{\mathrm{FCC}}(\nu) \,\mathrm{d}\mathcal{H}^2= 3\cdot 2^6 + 3^2 \cdot 2^6= 3\cdot 2^8\,.
\end{align}
\BBB Let $C:=  \{\zeta \in \mathbb{R}^3 \colon \zeta_i \geq 0 \text{ for all } i=1,2,3 \text{ and }  \frac{1}{4}\|\zeta\|_\infty \geq \frac{1}{6}\|\zeta\|_1\} $ and $C^c :=\{\zeta \in \mathbb{R}^3 \colon \zeta_i \geq 0 \text{ for all } i=1,2,3 \text{ and }  \frac{1}{4}\|\zeta\|_\infty < \frac{1}{6}\|\zeta\|_1\} $.
 We split the calculation of the volume $W \cap \{\zeta \in \mathbb{R}^3 \colon \zeta_i \geq 0 \text{ for all i}\}$ into the set $C\cap W_{{\mathrm{FCC}}}$ and $C^c \cap W_{\mathrm{FCC}}$. \EEE Noting that on this set $|\nabla \varphi^\circ_{\mathrm{FCC}}(\zeta)|=\frac{1}{4}$ $\mathcal{L}^3$-a.e. on $C$, due to the coarea-formula, we have
\begin{align*}
|\{C\cap W_{\mathrm{FCC}}\}| &= 4\int_{C\cap W_{\mathrm{FCC}}} |\nabla \varphi_{\mathrm{FCC}}^\circ(\zeta)|\,\mathrm{d}\zeta = 4 \int_0^1 \mathcal{H}^2(C\cap \{\varphi^\circ_{\mathrm{FCC}}(\zeta)=s\})\,\mathrm{d}s \\&= \int_0^14\cdot s^2 \cdot 6\,\mathrm{d}s=8\,.
\end{align*}
Here we used that, $C\cap \{\varphi^\circ_{\mathrm{FCC}}(\zeta)=s\}= s(S_1^+\cup S_2^+\cup S_3^+)\cap \{\zeta_i \geq 0\}$ and the scaling properties of the $2$-dimensional Hausdorff-measure. On the other hand, using that $|\nabla \varphi^\circ_{\mathrm{FCC}}(\zeta)|=\frac{\sqrt{3}}{6}$ $\mathcal{L}^3$-a.e. on $C^c$, we have \BBB
\begin{align*}
|\{C^c\cap W_{\mathrm{FCC}}\}| &= 2\sqrt{3} \int_{C^c\cap W_{\mathrm{FCC}}} |\nabla \varphi^\circ_{\mathrm{FCC}}(\zeta)|\,\mathrm{d}\zeta = 2\sqrt{3}\int_0^1\mathcal{H}^2(C^c\cap \{\varphi^\circ_{\mathrm{FCC}}(\zeta)=s\})\,\mathrm{d}s\\&= 2\sqrt{3}\int_0^1 s^2\cdot 12\sqrt{3}\,\mathrm{d}s =3\cdot 2^3\,.
\end{align*} 
Taking into account also the sets $\{\pm\zeta_i\geq 0\}$, we obtain
\begin{align*}
|W_{\mathrm{FCC}}| = 8(8+3\cdot 2^3)=2^8 \,.
\end{align*}\EEE
Now, this together with $\eqref{eq:intWFCC}$ yields \eqref{eq:surfaceFCC}.
\end{step}
\end{proof}

\begin{proof}[Proof of Proposition \ref{prop:main2}] We divide the proof into several steps. First, we calculate $\varphi_{\mathrm{HCP}}$. Then, we calculate $\varphi^\circ_{\mathrm{HCP}}$. Lastly, we calculate \eqref{eq:surfaceHCP}.\\ \noindent\begin{step}{1}(Calculation of $\varphi_{\mathrm{HCP}}$) We make use of Proposition \ref{prop:representation} in order to calculate $\varphi_{\mathrm{HCP}}$. First of all, due to \eqref{def:periodicitycellHCP}, note that 
\begin{align}\label{eq:VolumeHCP}
|T_{{\mathrm{HCP}}}|= \frac{2}{3}\sqrt{6} \cdot \frac{1}{2}\sqrt{3} = \sqrt{2}\,.
\end{align}
Given $u \colon \mathcal{L}_{\mathrm{FCC}} \to \mathbb{R}$ such that $u(\cdot)-\langle\nu,\cdot\rangle$ is $T_{\mathrm{HCP}}$-periodic we have that $u(x+e_i) = u(x)+\langle e_i,\nu\rangle$ for all $i=,1,2,3$ and $\mathcal{L}_{\mathrm{HCP}} \cap T_{{\mathrm{HCP}}}= \{0,v_1\}$. Hence, there exist $c_1,c_2 \in \mathbb{R}$ such that
\begin{align*}
u(x) = \begin{cases} \langle x,\nu\rangle +c_1 &x \in  \mathrm{span}_{\mathbb{Z}}\left\{e_1,e_2, e_3\right\}\,;\\
\langle x_0,\nu\rangle +c_2 &x =x_0 +v_1, \text{ with } x_0 \in  \mathrm{span}_{\mathbb{Z}}\left\{e_1,e_2, e_3\right\}\,.
\end{cases}
\end{align*}
Setting $c_2-c_1=t$, recalling \eqref{eq:neighbourhoodHCP1} and \eqref{eq:neighbourhoodHCP2}, we therefore obtain 
\begin{align*}
F_{\mathcal{L}_{\mathrm{HCP}}}(u,T_{{\mathrm{HCP}}})&= 2\left(|\langle e_1,\nu\rangle|+|\langle e_2,\nu\rangle|+|\langle e_1-e_2,\nu\rangle| \right) +|t| +|t-\langle e_1,\nu\rangle| +|t-\langle e_2,\nu\rangle| \\&\quad+ |t-\langle e_3,\nu\rangle| +|t-\langle e_3+e_1,\nu\rangle| +|t-\langle e_3+e_2,\nu\rangle| \,.
\end{align*}
Employing Proposition \ref{prop:representation} and \eqref{eq:VolumeHCP}, we have
\begin{align}\label{eq:HCPmin}
\varphi_{\mathrm{HCP}}(\nu) = \sqrt{2}\left(|\langle e_1,\nu\rangle|+|\langle e_2,\nu\rangle|+|\langle e_1-e_2,\nu\rangle| \right) +\frac{1}{\sqrt{2}}\min_{t\in \mathbb{R}} g_\nu(t)\,,
\end{align}
where
\begin{align*}
g_\nu(t) :=|t| +|t-\langle e_1,\nu\rangle| +|t-\langle e_2,\nu\rangle|+ |t-\langle e_3,\nu\rangle| +|t-\langle e_3+e_1,\nu\rangle| +|t-\langle e_3+e_2,\nu\rangle|\,.
\end{align*}
Next,  we show that
\begin{align}\label{eq:calcgnut}
\min_{t\in \mathbb{R}}g_\nu(t) = |\langle e_3,\nu\rangle| +2\max\{|\langle e_1,\nu\rangle|,\, |\langle e_2,\nu\rangle|,\,|\langle e_1-e_2,\nu\rangle|, \,|\langle e_3,\nu\rangle|\} \,.
\end{align}
Note that if \eqref{eq:calcgnut} is shown, \eqref{eq:varphiHCP} is proven and Step 1 is concluded. In order to prove \eqref{eq:calcgnut}, we first note that $g_\nu(t)$ is a piecewise affine function such that $g_\nu(t) \to +\infty$ as $|t|\to +\infty$. Hence, it attains its minimum at a point of non-differentiability. The function $g_\nu$ is not differentiable for $t\in \{0,\langle e_1,\nu\rangle,\langle e_2,\nu\rangle, \langle e_3,\nu\rangle,\langle e_3+ e_1,\nu\rangle,\langle e_3 +e_2,\nu\rangle\}$ and therefore
\begin{align*}
\min_{t\in \mathbb{R}}g_\nu(t) = |\langle e_3,\nu\rangle| + \min\{f_k(\nu) \colon k\in \{0,\ldots,5\}\}\,,
\end{align*}
where
\begin{align*}
f_0(\nu)&=|\langle e_1,\nu\rangle| + |\langle e_2,\nu\rangle| + |\langle e_3+ e_1,\nu\rangle| + |\langle e_3 + e_2,\nu\rangle|\,,\\
f_1(\nu)&=|\langle e_1,\nu\rangle| + |\langle e_1- e_2,\nu\rangle| + |\langle e_3- e_1,\nu\rangle| + |\langle e_3 + e_2-e_1,\nu\rangle| \,,\\
f_2(\nu)&=|\langle e_2,\nu\rangle| + |\langle e_1- e_2,\nu\rangle| + |\langle e_3- e_2,\nu\rangle| + |\langle e_3 + e_1-e_2,\nu\rangle| \,,\\
f_3(\nu)&=|\langle e_1,\nu\rangle| + |\langle e_2,\nu\rangle| + |\langle e_3-e_1,\nu\rangle| + |\langle e_3 -e_2,\nu\rangle| \,,\\
f_4(\nu)&=|\langle e_1,\nu\rangle| + |\langle e_1- e_2,\nu\rangle| + |\langle e_3+ e_1,\nu\rangle| + |\langle e_3 + e_1-e_2,\nu\rangle| \,,\\
f_5(\nu)&=|\langle e_2,\nu\rangle| + |\langle e_1- e_2,\nu\rangle| + |\langle e_3+e_2,\nu\rangle| + |\langle e_3 + e_2-e_1,\nu\rangle|\,.
\end{align*}
It is easy to see that
\begin{align*}
\min_{t\in \mathbb{R}}g_\nu(t) = |\langle e_3,\nu\rangle|+ \min\big\{|\langle e_1,R_k\nu\rangle| &+ |\langle e_2,R_k\nu\rangle| + |\langle e_3+ e_1,R_k\nu\rangle| \\&+ |\langle e_3 + e_2,R_k\nu\rangle| \colon k\in \{0,\ldots,5\}\big\}\,,
\end{align*}
where $R_k$ is the rotation of angle $k\pi/3$ around the $x_3$-axis. Noting also that $\min_{t\in \mathbb{R}} g_\nu(t)= \min_{t\in \mathbb{R}} g_{-\nu}(t)$, it is not restrictive to assume that $\langle e_1,\nu\rangle \geq 0, \langle e_2,\nu\rangle \geq 0, \langle e_3,\nu\rangle \geq 0$. We only consider the case, where $ \langle e_1,\nu\rangle \geq \langle e_2,\nu\rangle \geq  \langle e_3,\nu\rangle \geq 0$, the other being dealt with in a similar fashion. In this case we have $f_0(\nu)\geq f_3(\nu)$, $f_4(\nu)\geq f_5(\nu)$, and
\begin{align*}
f_1(\nu) &= \langle e_1,\nu\rangle +\langle e_1-e_2,\nu\rangle + \langle e_1-e_3,\nu\rangle +|\langle e_3+e_2- e_1,\nu\rangle| \\&= 2\langle e_1,\nu\rangle + \langle e_1-e_2-e_3,\nu\rangle + |\langle e_3+e_2-e_1,\nu\rangle| \geq 2\langle e_1,\nu\rangle\,;\\
f_2(\nu) &= \langle e_2,\nu\rangle + \langle e_1-e_2,\nu\rangle + \langle e_2-e_3,\nu\rangle + \langle e_3+e_1-e_2,\nu\rangle =2\langle e_1,\nu\rangle\,;\\
f_3(\nu) &= \langle e_1,\nu\rangle + \langle e_2,\nu\rangle + \langle e_1-e_3,\nu\rangle + \langle e_2-e_3,\nu\rangle = 2\langle e_1,\nu\rangle + 2\langle e_2,\nu\rangle -2\langle e_3,\nu\rangle \geq 2\langle e_1,\nu\rangle\,;\\ f_5(\nu) &= \langle e_2,\nu\rangle +\langle e_1-e_2,\nu\rangle + \langle e_3+e_2,\nu\rangle + |\langle e_3+e_2-e_1,\nu\rangle| \\&= 2\langle e_1,\nu\rangle + \langle e_3+e_2-e_1,\nu\rangle + |\langle e_3+e_2-e_1,\nu\rangle| \geq 2\langle e_1,\nu\rangle\,.
\end{align*}
Hence, we see that \eqref{eq:calcgnut} holds true. This together with \eqref{eq:HCPmin} establishes \eqref{eq:varphiHCP} and concludes Step~1.
\end{step}\\
\noindent\begin{step}{2}(Calculation of $\varphi^\circ_{\mathrm{HCP}}$) In order to calculate $\varphi^\circ_{\mathrm{HCP}}$, we exploit the symmetries of $\varphi^\circ_{\mathrm{HCP}}$. Note that 
\begin{align}\label{eq:HCPsym1}
\varphi_{\mathrm{HCP}}\left(-\begin{pmatrix}
\nu_1\\
\nu_2\\
\nu_3
\end{pmatrix}\right) =\varphi_{\mathrm{HCP}}\left(\begin{pmatrix}
\nu_1\\
\nu_2\\
-\nu_3
\end{pmatrix}\right) = \varphi_{\mathrm{HCP}}\left(\begin{pmatrix}
-\nu_1\\
\nu_2\\
\nu_3
\end{pmatrix}\right) = \varphi_{\mathrm{HCP}}\left(\begin{pmatrix}
\nu_1\\
\nu_2\\
\nu_3
\end{pmatrix}\right)\,.
\end{align}
Given $\zeta \in \mathbb{R}^3$ we can find $R= T_1^{\alpha_1}\circ T_2^{\alpha_2}\circ T_3^{\alpha_3}$, $\alpha_i \in \{0,1\}$ such that $(R\zeta)_i \geq 0 $ for all $i$. Thus, 
\begin{align}\label{eq:HCPdualsym}
\begin{split}
\varphi_{\mathrm{HCP}}^\circ(\zeta)&=\max_{\varphi_{\mathrm{HCP}}(\nu)\leq 1} \langle\nu,\zeta\rangle  = \max_{\varphi_{\mathrm{HCP}}(\nu)\leq 1} \langle R\nu,R\zeta\rangle \\&= \max_{\varphi_{\mathrm{HCP}}(R^{-1}\nu)\leq 1} \langle \nu,R\zeta\rangle = \max_{\varphi_{\mathrm{HCP}}(\nu)\leq 1} \langle \nu,R\zeta\rangle=\varphi_{\mathrm{HCP}}^\circ(R\zeta)\,.
\end{split}
\end{align}
It therefore suffices to calculate $\varphi_{\mathrm{HCP}}^\circ$ for $\zeta \in \mathbb{R}^3$ such that $\zeta_i\geq 0$.  This together with \eqref{eq:HCPsym1} implies that if $\nu=(\nu_1,\nu_2,\nu_3)$ is such that $\varphi_{{\mathrm{HCP}}}(\nu)\leq 1$ and 
\begin{align*}
\langle \nu,\zeta\rangle = \max_{\varphi_{\mathrm{HCP}}(\nu) \leq 1} \langle\nu,\zeta\rangle\,,
\end{align*}
then $\nu_i \geq 0$ for all $i$. Additionally, a maximizer $\nu$ can be chosen such that $\varphi_{\mathrm{HCP}}$ is not differentiable at $\nu$. Therefore, there are the following cases to consider:
\begin{itemize}
\item[(a)] $\langle e_1-e_2,\nu\rangle=0$;
\item[(b)] $\langle e_1-e_3,\nu\rangle=0, \langle e_1 - e_2,\nu\rangle\geq 0$;
\item[(c)] $\langle e_2-e_3,\nu\rangle=0, \langle e_3 - e_1,\nu\rangle\geq 0$;
\item[(d)] $\langle e_3,\nu\rangle=0$;
\item[(e)] $\langle e_1,\nu\rangle=0$.
\end{itemize}
Here, we point out that the points on the boundary $\nu_2=0$ are excluded as possible maximum points by arguing in the following way: If there were a point $\nu$ such that $\nu_2=0$, then $\varphi_{\mathrm{HCP}}(\nu)$ would either be differentiable and thus $\nu$ would not be a maximum point or $\nu$ would satisfy one of the cases (a)-(e).\\
\noindent\emph{Maximum of case {\rm (a)}.} Since $\langle e_1-e_2,\nu\rangle=0$, we have $\nu_1= \sqrt{3}\nu_2$. Hence, $\nu= (t, \frac{1}{\sqrt{3}}t,s)$ for some $t,s \geq 0$. Now, using \eqref{eq:varphiHCP}, we have
\begin{align*}
\varphi_{\mathrm{HCP}}(\nu) = \sqrt{2}\left(2\nu_1 + \frac{1}{3}\sqrt{6}\nu_3 + \max\{\nu_1,\frac{2}{3}\sqrt{6}\nu_3\} \right)\,.
\end{align*}
(a.1) $t \geq \frac{2}{3}\sqrt{6}s$:
Since the maximum is attained for $\varphi_{\mathrm{HCP}}(\nu)=1$, we have $t = \frac{1}{3\sqrt{2}} -\frac{1}{9}\sqrt{6}s$. Now, $t\geq 0$ together with $t \geq \frac{2}{3}\sqrt{6}s$ implies $ 0\leq s \leq \frac{3}{14\sqrt{3}}$. Noting that 
\begin{align*}
\langle\nu,\zeta\rangle = t\left(\zeta_1 +\frac{1}{\sqrt{3}}\zeta_2\right) +s \zeta_3 = \left(\frac{1}{3\sqrt{2}}-\frac{1}{9}\sqrt{6}s\right)\left(\zeta_1 +\frac{1}{\sqrt{3}}\zeta_2\right) + s\zeta_3\,,
\end{align*}
we obtain
\begin{align}\label{eq:maxa1}
\max_{\nu \text{ sat. (a.1)} }\langle\nu,\zeta\rangle = \max\left\{ \frac{2}{7\sqrt{2}}\left(\zeta_1 +\frac{1}{\sqrt{3}}\zeta_2 + \frac{3}{2\sqrt{6}}\zeta_3\right), \frac{1}{3\sqrt{2}}\left(\zeta_1 +\frac{1}{\sqrt{3}}\zeta_2\right)\right\}\,.
\end{align}
(a.2) $t \leq \frac{2}{3}\sqrt{6}s$: 
Using $\varphi_{\mathrm{HCP}}(\nu)=1$, we obtain $t=\frac{1}{2\sqrt{2}} -\frac{\sqrt{6}}{2}s$. Now, $t\geq 0$ together with $t\leq \frac{2}{3}\sqrt{6}s$ implies $\frac{3}{14\sqrt{3}}\leq s\leq \frac{1}{2\sqrt{3}}$. Noting that 
\begin{align*}
\langle\nu,\zeta\rangle = t\left(\zeta_1 +\frac{1}{\sqrt{3}}\zeta_2\right) +s \zeta_3 = \left(\frac{1}{3\sqrt{2}}-\frac{1}{9}\sqrt{6}s\right)\left(\zeta_1 +\frac{1}{\sqrt{3}}\zeta_2\right) + s\zeta_3\,,
\end{align*}
we obtain
\begin{align}
\max_{\nu \text{ sat. } (a.2)}\langle\nu,\zeta\rangle =\max\left\{ \frac{2}{7\sqrt{2}}\left(\zeta_1 +\frac{1}{\sqrt{3}}\zeta_2 + \frac{3}{2\sqrt{6}}\zeta_3\right), \frac{1}{2\sqrt{3}}\zeta_3 \right\}\,.
\end{align}
\noindent \emph{Maximum of case {\rm (b)}.} Since $\langle e_1-e_3,\nu\rangle =0$, we have $\nu_1 = \frac{2}{3}\sqrt{6}\nu_3$. Hence, $\nu=(t,s, \frac{3}{2\sqrt{6}}t)$ for some $t,s \geq 0$. Now using \eqref{eq:varphiHCP}, we have
\begin{align*}
\varphi_{\mathrm{HCP}}(\nu) = \frac{7}{2}\sqrt{2}\nu_1\,.
\end{align*}
Hence, since the maximum is attained for $\varphi_{\mathrm{HCP}}(\nu)=1$, we have $\nu_1 = \frac{2}{7\sqrt{2}}$. Additionally, since $\langle e_1-e_2,\nu\rangle\geq 0$, we have $\nu_2 \leq \frac{2}{7\sqrt{6}}$, and due to the form of $\nu$, we have $\nu_3 = \frac{3}{14\sqrt{3}}$. This implies
\begin{align}
\max_{\nu \text{ sat. (b)}} \langle\nu,\zeta\rangle = \frac{2}{7\sqrt{2}}\left(\zeta_1 +\frac{1}{\sqrt{3}}\zeta_2 + \frac{3}{2\sqrt{6}}\zeta_3\right)\,.
\end{align}
\noindent \emph{Maximum of case {\rm (c)}.} Since $\langle e_2-e_3,\nu\rangle =0$, we have $\frac{1}{2}\nu_1+\frac{1}{2}\sqrt{3}\nu_2 = \frac{2}{3}\sqrt{6}\nu_3$. Now using \eqref{eq:varphiHCP}, we have
\begin{align*}
\varphi_{\mathrm{HCP}}(\nu) = \frac{7}{2}\sqrt{2}\langle e_3,\nu\rangle = \frac{14}{3}\sqrt{3}\nu_3\,.
\end{align*}
Hence, since the maximum is attained for $\varphi_{\mathrm{HCP}}(\nu)=1$, we have $\nu_3 = \frac{3}{14\sqrt{3}}$. Additionally, since $\langle e_3-e_1,\nu\rangle\geq 0$, we have $\nu_1 \leq \frac{2}{7\sqrt{2}}$. Due to the form of $\langle e_2-e_3,\nu\rangle=0$, we have $\nu_2 = \frac{4}{7\sqrt{6}}-\frac{1}{\sqrt{3}}\nu_1$. Note that $\nu_2 \geq 0$ for all $0\leq \nu_1\leq \frac{2}{7\sqrt{2}}$. Therefore
\begin{align*}
\langle \nu,\zeta\rangle = \nu_1\zeta_1 + \left(\frac{4}{7\sqrt{
6}}-\frac{1}{\sqrt{3}}\nu_1\right)\zeta_2 + \frac{3}{14\sqrt{3}}\zeta_3\,.
\end{align*}
This implies 
\begin{align}
\max_{\nu \text{ sat. (c)}} \langle\nu,\zeta\rangle =\max\left\{\frac{4}{7\sqrt{6}}\zeta_2+\frac{3}{14\sqrt{3}}\zeta_3, \frac{2}{7\sqrt{2}}\left(\zeta_1 +\frac{1}{\sqrt{3}}\zeta_2 + \frac{3}{2\sqrt{6}}\zeta_3\right)\right\}\,.
\end{align}
\noindent\emph{Maximum of case {\rm (d)}.} We have $\nu_3=0$ and therefore 
\begin{align*}
\varphi_{\mathrm{HCP}}(\nu) = \sqrt{2}\left(\langle e_1,\nu\rangle+ \langle e_2,\nu\rangle +|\langle e_1-e_2,\nu\rangle| + \max\{\langle e_1,\nu\rangle, \langle e_2,\nu\rangle\}\right)
\end{align*}
We distinguish to cases
\begin{itemize}
\item[(d.1)] $\langle e_1-e_2,\nu\rangle \geq 0$;
\item[(d.2)] $\langle e_1-e_2,\nu\rangle \leq 0$.
\end{itemize}
\emph{Maximum of case {\rm (d.1)}.} In the case $\langle e_1-e_2,\nu\rangle \geq 0$ we  have $\varphi_{\mathrm{HCP}}(\nu) = 3\sqrt{2}\nu_1$ and therefore, since $\varphi_{\mathrm{HCP}}(\nu)=1$, $\nu_1 =\frac{1}{3\sqrt{2}}$.  The inequality $\langle e_1-e_2,\nu\rangle \geq 0$ implies that $0\leq \nu_2 \leq \frac{1}{\sqrt{3}}\nu_1= \frac{1}{3\sqrt{6}}$. Hence,
\begin{align}
\max_{\nu \text{ sat. (d.1)}} \langle\nu,\zeta\rangle = \frac{1}{3\sqrt{2}}\left(\zeta_1 +\frac{1}{\sqrt{3}}\zeta_2\right)\,.
\end{align}
\noindent\emph{Maximum of case {\rm (d.2)}.} In the case $\langle e_1-e_2,\nu\rangle \leq 0$ we  have 
\begin{align*}
\varphi_{\mathrm{HCP}}(\nu) = 3\sqrt{2}\langle e_2,\nu\rangle = \sqrt{2}\left(\frac{3}{2}\nu_1+\frac{3}{2}\sqrt{3}\nu_2\right)\,.
\end{align*}
This, together with $\varphi_{\mathrm{HCP}}(\nu)=1$, implies, $\nu_1 = \frac{2}{3\sqrt{2}}-\sqrt{3}\nu_2$ and therefore $\nu_2 \leq \frac{2}{3\sqrt{6}}$. Additionally, since $\langle e_1-e_2\rangle \leq 0$, we have $\frac{1}{3\sqrt{6}}\leq \nu_2$. Therefore,
\begin{align*}
\langle \nu,\zeta\rangle = \nu_1\zeta_1 + \nu_2 \zeta_2 = \left(\frac{3}{2\sqrt{2}}- \sqrt{3}\nu_2\right)\zeta_1+\nu_2\zeta_2\,.
\end{align*}
This implies
\begin{align}
\max_{\nu \text{ sat. (d.2)}} \langle\nu,\zeta\rangle = \max\left\{\frac{2}{3\sqrt{6}}\zeta_2,\frac{1}{3\sqrt{2}}\left(\zeta_1 +\frac{1}{\sqrt{3}}\zeta_2\right)\right\}\,.
\end{align}
\noindent\emph{Maximum of case {\rm (e)}.} In the case $\nu_1=0$ we  have 
\begin{align*}
\varphi_{\mathrm{HCP}}(\nu) = \sqrt{2}\left(\sqrt{3}\nu_2 +\frac{1}{3}\sqrt{6}\nu_3 +\max\left\{\frac{\sqrt{3}}{2}\nu_2,\frac{2}{3}\sqrt{6}\nu_3\right\}\right)\,.
\end{align*}
We distinguish between two cases:
\begin{itemize}
\item[(e.1)] $\langle e_2,\nu\rangle \geq \langle e_3,\nu\rangle$;
\item[(e.2)] $\langle e_2,\nu\rangle \leq \langle e_3,\nu\rangle$.
\end{itemize}
\noindent\emph{Maximum of case {\rm (e.1)}.} In this case, we have
\begin{align*}
\varphi_{\mathrm{HCP}}(\nu) = \sqrt{2}\left(\frac{3}{2}\sqrt{3}\nu_2+\frac{1}{3}\sqrt{6}\nu_3\right)\,.
\end{align*}
Therefore, since $\varphi_{\mathrm{HCP}}(\nu)=1$, we have $\nu_2 = \frac{2}{3\sqrt{6}} -\frac{2}{9}\sqrt{2}\nu_3$. Hence, $\nu_3 \leq \frac{3}{2\sqrt{3}}$. Additionally, since $\langle e_2-e_3,\nu\rangle \geq 0$, we have $\nu_3\leq \frac{3}{14\sqrt{3}}$. Therefore,
\begin{align*}
\langle\nu,\zeta\rangle = \nu_2\zeta_2 + \nu_3\zeta_3 = \left(\frac{2}{3\sqrt{6}}-\frac{2}{9}\sqrt{2}\nu_3\right)\zeta_2 +\nu_3\zeta_3\,.
\end{align*} 
Hence, 
\begin{align}
\max_{\nu \text{ sat. (e.1)}} \langle\nu,\zeta\rangle = \max\left\{\frac{2}{3\sqrt{6}}\zeta_2,\frac{4}{7\sqrt{6}}\zeta_2+\frac{3}{14\sqrt{3}}\zeta_3\right\}\,.
\end{align}
\noindent\emph{Maximum of case {\rm (e.2)}.} In this case, we have
\begin{align*}
\varphi_{\mathrm{HCP}}(\nu) = \sqrt{2}\left(\sqrt{3}\nu_2+\sqrt{6}\nu_3\right)\,.
\end{align*}
Therefore, since $\varphi_{\mathrm{HCP}}(\nu)=1$, we have $\nu_2 = \frac{1}{\sqrt{6}} -\sqrt{2}\nu_3$. Hence, $\nu_3 \leq \frac{1}{2\sqrt{3}}$. Additionally, since $\langle e_2-e_3,\nu\rangle \leq 0$, we have $\nu_3\geq \frac{3}{14\sqrt{3}}$. Therefore,
\begin{align*}
\langle\nu,\zeta\rangle = \nu_2\zeta_2 + \nu_3\zeta_3 = \left(\frac{1}{3\sqrt{6}}-\sqrt{2}\nu_3\right)\zeta_2 +\nu_3\zeta_3\,.
\end{align*} 
Hence, 
\begin{align}\label{eq:maxe2}
\max_{\nu \text{ sat. (e.1)}} \langle\nu,\zeta\rangle = \max\left\{\frac{1}{2\sqrt{3}}\zeta_3,\frac{4}{7\sqrt{6}}\zeta_2+\frac{3}{14\sqrt{3}}\zeta_3\right\}\,.
\end{align}

 Exploiting \eqref{eq:maxa1}--\eqref{eq:maxe2}, and \eqref{eq:HCPdualsym}, we obtain \eqref{eq:HCPdual}. This concludes Step 2.
\end{step}\\
\noindent \begin{step}{3}(Calculation of \eqref{eq:surfaceHCP})  In order to calculate \eqref{eq:surfaceHCP}, we split the calculation of $ \partial^* W_{\mathrm{HCP}}= \{\varphi^\circ_{\mathrm{HCP}}(\zeta)=1\}$ into different sets, where the maximum of $\varphi_{\mathrm{HCP}}^\circ$ is attained. We consider the following cases
\begin{itemize}
\item[(a)] $A_a:=\{ \zeta \in \mathbb{R}^3 \colon\varphi_{\mathrm{HCP}}^\circ(\zeta)=\frac{1}{2\sqrt{3}}|\zeta_3|=1\}$;
\item[(b)] $A_b:=\{\zeta \in \mathbb{R}^3 \colon\varphi_{\mathrm{HCP}}^\circ(\zeta)=\frac{2}{3\sqrt{6}}|\zeta_2|=1\}$;
\item[(c)] $A_c:=\{\zeta \in \mathbb{R}^3 \colon\varphi_{\mathrm{HCP}}^\circ(\zeta)=\frac{4}{7\sqrt{6}}|\zeta_2| +\frac{3}{14\sqrt{3}}|\zeta_3| =1\}$;
\item[(d)] $A_d:=\{\zeta \in \mathbb{R}^3 \colon\varphi_{\mathrm{HCP}}^\circ(\zeta)=\frac{1}{3\sqrt{2}}(|\zeta_1| +\frac{1}{\sqrt{3}}|\zeta_2|) =1\}$;
\item[(e)] $A_e:=\{\zeta \in \mathbb{R}^3 \colon\varphi_{\mathrm{HCP}}^\circ(\zeta)=\frac{2}{7\sqrt{2}}(|\zeta_1| +\frac{1}{\sqrt{3}}|\zeta_2|+\frac{3}{2\sqrt{6}}|\zeta_3|) =1\}$.
\end{itemize}
In each of the cases, one can determine the area, shape and normal of the set, by invoking the condition that the maximum is attained $\varphi^\circ_{\mathrm{HCP}}$ for the respective function and therefore all the other functions $f$ in the definition of $\varphi^\circ_{\mathrm{HCP}}$ satisfy $f\leq 1$. In the following, we only collect the results, since the calculations are elementary (but very long).

\noindent\emph{Calculations for case {\rm (a)}.} In this case, we see that $\nu=(0,0,\pm 1)$ $\mathcal{H}^2$-a.e., since this set is contained in the level set of the function $|\zeta_3|=c$ for some $c >0$. Additionally, we see that the set is a union of two hexagons of side length  $2\sqrt{2}$. Therefore, for each of the two hexagons $H_{i}$ we have $\mathcal{H}^2(H_i) =12\sqrt{3}$. Furthermore, $\varphi_{\mathrm{HCP}}(\nu)=2\sqrt{3}$. Hence
\begin{align}\label{eq:intHCPa}
\int_{A_a} \varphi_{\mathrm{HCP}}(\nu)\,\mathrm{d}\mathcal{H}^2 = 2\cdot 12\sqrt{3}\cdot 2\sqrt{3}= 2^4 \cdot 3^2\,.
\end{align}

\noindent \emph{Calculations for case {\rm (b)}.} In this case, we see that $\nu=(0,\pm 1,0)$ $\mathcal{H}^2$-a.e., since this set is contained in the level set of the function $|\zeta_2|=c$ for some $c >0$. Additionally, we see that the set is a union of two rectangles with side lengths  $3\sqrt{2}$ and $\frac{4}{3}\sqrt{3}$. Therefore, for each of the two rectangles $S_{i}$ we have $\mathcal{H}^2(S_i) =4\sqrt{6}$. Furthermore, $\varphi_{\mathrm{HCP}}(\nu)=\frac{3}{2}\sqrt{6}$. Hence
\begin{align}
\int_{A_b} \varphi_{\mathrm{HCP}}(\nu)\,\mathrm{d}\mathcal{H}^2 = 2\cdot 4\sqrt{6}\cdot \frac{3}{2}\sqrt{6}= 2^3 \cdot 3^2\,.
\end{align}
\noindent \emph{Calculations for case {\rm (c)}.} In this case, we see that $\nu=(3/41)^{1/2} (0,\pm 8/\sqrt{6},\pm\sqrt{3})$ $\mathcal{H}^2$-a.e., since this set is contained in the level set of the function $\frac{4}{7\sqrt{6}}|\zeta_2| +\frac{3}{14\sqrt{3}}|\zeta_3|=c$ for some $c >0$. Additionally, we see that the set is a union of four trapezoids with height $(41/6)^{1/2}$ and two parallel sides o lengths  $3\sqrt{2}$ and $2\sqrt{2}$. Therefore, for each of the four trapezoids $T_{i}$ we have $\mathcal{H}^2(T_i) =\frac{5}{2}(\frac{41}{3})^{1/2}$. Furthermore, $\varphi_{\mathrm{HCP}}(\nu)=14(\frac{3}{41})^{1/2}$. Hence
\begin{align}
\int_{A_c} \varphi_{\mathrm{HCP}}(\nu)\,\mathrm{d}\mathcal{H}^2 = 4\cdot \frac{5}{2}\left(\frac{41}{3}\right)^{1/2}\cdot 14\left(\frac{3}{41}\right)^{1/2}= 2^2\cdot 5\cdot 7\,.
\end{align}

\noindent \emph{Calculations for case {\rm (d)}.} In this case, we see that $\nu=\frac{1}{2} (\pm\sqrt{3},\pm 1,0)$ $\mathcal{H}^2$-a.e., since this set is contained in the level set of the function $|\zeta_1| +\frac{1}{\sqrt{3}}|\zeta_2|=c$ for some $c >0$. Additionally, we see that the set is a union of four rectangles with side length $3\sqrt{2}$ and $\frac{4}{3}\sqrt{3}$. Therefore, for each of the four rectangles $R_{i}$ we have $\mathcal{H}^2(R_i) =4\sqrt{6}$. Furthermore, $\varphi_{\mathrm{HCP}}(\nu)=\frac{3}{2}\sqrt{6}$. Hence
\begin{align}
\int_{A_d} \varphi_{\mathrm{HCP}}(\nu)\,\mathrm{d}\mathcal{H}^2 = 4\cdot 4\sqrt{6}\cdot \frac{3}{2}\sqrt{6}= 2^4\cdot 3^2\,.
\end{align}

\noindent \emph{Calculations for case {\rm (e)}.} In this case, we see that $\nu=2(6/41)^{1/2} (\pm 1,\pm \frac{1}{\sqrt{3}},\frac{3}{2\sqrt{6}})$ $\mathcal{H}^2$-a.e., since this set is contained in the level set of the function $|\zeta_1| +\frac{1}{\sqrt{3}}|\zeta_2|+ \frac{3}{2\sqrt{6}}|\zeta_3|=c$ for some $c >0$. Additionally, we see that the set is a union of eight trapezoids with height $(41/6)^{1/2}$ and two parallel sides o lengths  $3\sqrt{2}$ and $2\sqrt{2}$. Therefore, for each of the eight trapezoids $Z_{i}$ we have $\mathcal{H}^2(Z_i) =\frac{5}{2}(\frac{41}{3})^{1/2}$. Furthermore, $\varphi_{\mathrm{HCP}}(\nu)=14(\frac{3}{41})^{1/2}$. Hence
\begin{align}\label{eq:intHCPe}
\int_{A_e} \varphi_{\mathrm{HCP}}(\nu)\,\mathrm{d}\mathcal{H}^2 = 8\cdot \frac{5}{2}\left(\frac{41}{3}\right)^{1/2}\cdot 14\left(\frac{3}{41}\right)^{1/2}= 2^3\cdot 5\cdot 7\,.
\end{align}
Taking into account \eqref{eq:intHCPa}--\eqref{eq:intHCPe}, we obtain
\begin{align}\label{eq:bdryint}
\int_{\partial^* W_{\mathrm{HCP}}} \varphi_{\mathrm{HCP}}(\nu)\,\mathrm{d}\mathcal{H}^2 = 2^5\cdot3^2+ 2^3\cdot 3^2+2^2\cdot 5\cdot 7 + 2^4\cdot 3^2 + 2^3\cdot 5 \cdot 7=780\,.
\end{align}
Next, we need to calculate $|W_{\mathrm{HCP}}|$, since $W_{\mathrm{HCP}} = \{\varphi_{\mathrm{HCP}}^\circ \leq 1\} \cap (C_a\cup C_b \cup C_c \cup C_d \cup C_e)$, where 
\begin{align*}
&C_a = \{ \zeta \in \mathbb{R}^3 \colon \varphi^{\mathrm{\circ}}_{\mathrm{HCP}}(\zeta) = \frac{1}{2\sqrt{3}}|\zeta_3|\}\,, \\&C_b:=\{\zeta \in \mathbb{R}^3 \colon\varphi_{\mathrm{HCP}}^\circ(\zeta)=\frac{2}{3\sqrt{6}}|\zeta_2|\}\,, \\&C_c:=\{\zeta \in \mathbb{R}^3 \colon\varphi_{\mathrm{HCP}}^\circ(\zeta)=\frac{4}{7\sqrt{6}}|\zeta_2| +\frac{3}{14\sqrt{3}}|\zeta_3|\}\,,\\&C_d:=\{\zeta \in \mathbb{R}^3 \colon\varphi_{\mathrm{HCP}}^\circ(\zeta)=\frac{1}{3\sqrt{2}}(|\zeta_1| +\frac{1}{\sqrt{3}}|\zeta_2|)\}\,, \\&C_e:=\{\zeta \in \mathbb{R}^3 \colon\varphi_{\mathrm{HCP}}^\circ(\zeta)=\frac{2}{7\sqrt{2}}(|\zeta_1| +\frac{1}{\sqrt{3}}|\zeta_2|+\frac{3}{2\sqrt{6}}|\zeta_3|)\}\,.
\end{align*} 
Note that $ \mathcal{H}^2(C_\alpha \cap \{\varphi_{\mathrm{HCP}}^\circ(\zeta)=s\}) = s^2 \mathcal{H}^2(A_\alpha)$ for all $\alpha \in \{a,b,c,d,e\}$. In the set $C_a$ we have that $|\nabla \varphi_{\mathrm{HCP}}^\circ(\zeta)|= \frac{1}{2\sqrt{3}}$ $\mathcal{L}^3$-a.e.. Due to the coarea formula, we have
\begin{align}\label{eq:intCa}
\begin{split}
|C_a \cap W_{\mathrm{HCP}}| &= 2\sqrt{3}\int_{C_a \cap W_{\mathrm{HCP}}} |\nabla \varphi_{\mathrm{HCP}}^\circ(\zeta)|\,\mathrm{d}\zeta\\&= 2\sqrt{3}\int_0^1  \mathcal{H}^2(C_a \cap \{\varphi_{\mathrm{HCP}}^\circ(\zeta)=s\})\,\mathrm{d}s= \frac{2}{3}\sqrt{3}\mathcal{H}^2(A_a)= 2^4 \cdot 3\,.
\end{split}
\end{align}
In the set $C_b$, we have that $|\nabla \varphi_{\mathrm{HCP}}^\circ(\zeta)|= \frac{2}{3\sqrt{6}}$ $\mathcal{L}^3$-a.e.. Due to the coarea formula, we have
\begin{align}
\begin{split}
|C_b \cap W_{{\mathrm{HCP}}}| &= \frac{3}{2}\sqrt{6}\int_{C_b \cap W_{\mathrm{HCP}}} |\nabla \varphi_{\mathrm{HCP}}^\circ(\zeta)|\,\mathrm{d}\zeta\\&= \frac{3}{2}\sqrt{6}\int_0^1  \mathcal{H}^2(C_b \cap \{\varphi_{\mathrm{HCP}}^\circ(\zeta)=s\})\,\mathrm{d}s= \frac{1}{2}\sqrt{6}\mathcal{H}^2(A_b)= 2^3\cdot 3\,.
\end{split}
\end{align}
In the set $C_c$, we have that $|\nabla \varphi_{\mathrm{HCP}}^\circ(\zeta)|= \frac{1}{14}(41/3)^{1/2}$ $\mathcal{L}^3$-a.e.. Due to the coarea formula, we have
\begin{align}
\begin{split}
|C_c \cap W_{\mathrm{HCP}}| &= 14\left(\frac{3}{41}\right)^{1/2} \int_{C_c \cap W_{\mathrm{HCP}}} |\nabla \varphi_{\mathrm{HCP}}^\circ(\zeta)|\,\mathrm{d}\zeta\\&= 14\left(\frac{3}{41}\right)^{1/2} \int_0^1  \mathcal{H}^2(C_c \cap \{\varphi_{\mathrm{HCP}}^\circ(\zeta)=s\})\,\mathrm{d}s\\&= \frac{1}{3}14\left(\frac{3}{41}\right)^{1/2}\mathcal{H}^2(A_c)=\frac{2^2\cdot 5\cdot 7}{3} \,.
\end{split}
\end{align}
In the set $C_d$, we have that $|\nabla \varphi_{\mathrm{HCP}}^\circ(\zeta)|=\frac{2}{3\sqrt{6}}$ $\mathcal{L}^3$-a.e.. Due to the coarea formula, we have
\begin{align}
\begin{split}
|C_d\cap W_{\mathrm{HCP}}| &= \frac{3}{2}\sqrt{6} \int_{C_d \cap W_{\mathrm{HCP}}} |\nabla \varphi_{\mathrm{HCP}}^\circ(\zeta)|\,\mathrm{d}\zeta\\&=  \frac{3}{2}\sqrt{6}\int_0^1  \mathcal{H}^2(C_d \cap \{\varphi_{\mathrm{HCP}}^\circ(\zeta)=s\})\,\mathrm{d}s\\&= \frac{1}{2}\sqrt{6}\mathcal{H}^2(A_d)= 2^4\cdot 3  \,.
\end{split}
\end{align}
In the set $C_e$, we have that $|\nabla \varphi_{\mathrm{HCP}}^\circ(\zeta)|=\frac{1}{14}(41/3)^{1/2}$ $\mathcal{L}^3$-a.e.. Due to the coarea formula, we have
\begin{align}\label{eq:intCe}
\begin{split}
|C_e \cap W_{\mathrm{HCP}}| &= 14\left(\frac{3}{41}\right)^{1/2} \int_{C_e \cap W_{\mathrm{HCP}}} |\nabla \varphi_{\mathrm{HCP}}^\circ(\zeta)|\,\mathrm{d}\zeta\\&=  14\left(\frac{3}{41}\right)^{1/2}\int_0^1  \mathcal{H}^2(C_e \cap \{\varphi_{\mathrm{HCP}}^\circ(\zeta)=s\})\,\mathrm{d}s\\&= \frac{1}{3}14\left(\frac{3}{41}\right)^{1/2}\mathcal{H}^2(A_e)= \frac{2^3\cdot 5\cdot 7}{3}  \,.
\end{split}
\end{align}
Using \eqref{eq:intCa}--\eqref{eq:intCe}, we obtain $|W_{\mathrm{HCP}}| =260$. This together with \eqref{eq:bdryint} yields \eqref{eq:surfaceHCP}.
\end{step}
\end{proof}

\section{$\Gamma$-convergence analysis on the $\mathrm{FCC}$ and $\mathrm{HCP}$ lattices}\label{sec:Gammaconvergence}
In this section we prove Theorem \ref{theorem:Gammaconvergence}. In order to prove the compactness statement, we provide some preliminary lemmata about the shape of the Voronoi cells of the $\mathrm{FCC}$-lattice as well as the $\mathrm{HCP}$-lattice (see Figure~\ref{fig:VFCC}). In what follows we use the notation $\mathcal N_{\mathrm {FCC}}=\mathcal N_{{\mathcal L_\mathrm {FCC}}}(0)$ and $\mathcal N_{\mathrm {HCP}}=\mathcal N_{{\mathcal L_\mathrm {HCP}}}(0)$.

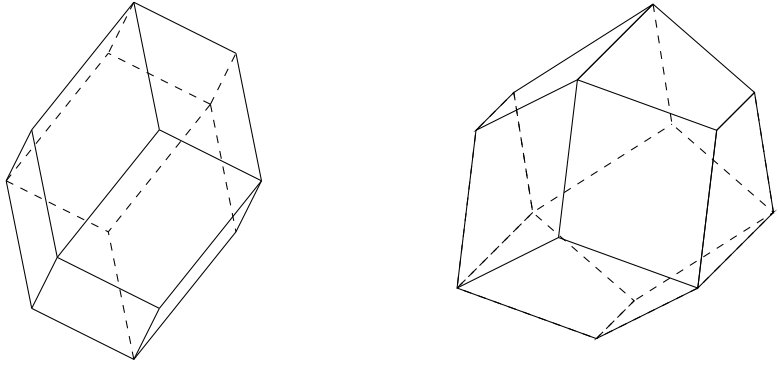
\begin{figure}[htp]
\begin{tikzpicture}[scale=.8]

\begin{scope}[scale=6,x=0.5cm,y=0.7cm,z=0.3cm]
\draw({.5*sqrt(2)},0,0)--({.25*sqrt(2)},{.25*sqrt(2)},{.25*sqrt(2)});
\draw({.5*sqrt(2)},0,0)--({.25*sqrt(2)},{.25*sqrt(2)},{-.25*sqrt(2)});
\draw({.5*sqrt(2)},0,0)--({.25*sqrt(2)},{-.25*sqrt(2)},{.25*sqrt(2)});
\draw({.5*sqrt(2)},0,0)--({.25*sqrt(2)},{-.25*sqrt(2)},{-.25*sqrt(2)});

\draw[dashed]({-.5*sqrt(2)},0,0)--({-.25*sqrt(2)},{.25*sqrt(2)},{.25*sqrt(2)});
\draw({-.5*sqrt(2)},0,0)--({-.25*sqrt(2)},{.25*sqrt(2)},{-.25*sqrt(2)});
\draw[dashed]({-.5*sqrt(2)},0,0)--({-.25*sqrt(2)},{-.25*sqrt(2)},{.25*sqrt(2)});
\draw({-.5*sqrt(2)},0,0)--({-.25*sqrt(2)},{-.25*sqrt(2)},{-.25*sqrt(2)});

\draw(0,{.5*sqrt(2)},0)--({.25*sqrt(2)},{.25*sqrt(2)},{.25*sqrt(2)});
\draw(0,{.5*sqrt(2)},0)--({.25*sqrt(2)},{.25*sqrt(2)},{-.25*sqrt(2)});
\draw[dashed](0,{.5*sqrt(2)},0)--({-.25*sqrt(2)},{.25*sqrt(2)},{.25*sqrt(2)});
\draw(0,{.5*sqrt(2)},0)--({-.25*sqrt(2)},{.25*sqrt(2)},{-.25*sqrt(2)});

\draw(0,{-.5*sqrt(2)},0)--({.25*sqrt(2)},{-.25*sqrt(2)},{.25*sqrt(2)});
\draw(0,{-.5*sqrt(2)},0)--({.25*sqrt(2)},{-.25*sqrt(2)},{-.25*sqrt(2)});
\draw[dashed](0,{-.5*sqrt(2)},0)--({-.25*sqrt(2)},{-.25*sqrt(2)},{.25*sqrt(2)});
\draw(0,{-.5*sqrt(2)},0)--({-.25*sqrt(2)},{-.25*sqrt(2)},{-.25*sqrt(2)});

\draw[dashed](0,0,{.5*sqrt(2)})--({.25*sqrt(2)},{.25*sqrt(2)},{.25*sqrt(2)});
\draw[dashed](0,0,{.5*sqrt(2)})--({-.25*sqrt(2)},{.25*sqrt(2)},{.25*sqrt(2)});
\draw[dashed](0,0,{.5*sqrt(2)})--({.25*sqrt(2)},{-.25*sqrt(2)},{.25*sqrt(2)});
\draw[dashed](0,0,{.5*sqrt(2)})--({-.25*sqrt(2)},{-.25*sqrt(2)},{.25*sqrt(2)});

\draw(0,0,{-.5*sqrt(2)})--({.25*sqrt(2)},{.25*sqrt(2)},{-.25*sqrt(2)});
\draw(0,0,{-.5*sqrt(2)})--({-.25*sqrt(2)},{.25*sqrt(2)},{-.25*sqrt(2)});
\draw(0,0,{-.5*sqrt(2)})--({.25*sqrt(2)},{-.25*sqrt(2)},{-.25*sqrt(2)});
\draw(0,0,{-.5*sqrt(2)})--({-.25*sqrt(2)},{-.25*sqrt(2)},{-.25*sqrt(2)});

\end{scope}

\begin{scope}[shift={(8,0)},scale=4]
\draw[dashed](0,{1/3*sqrt(3)},{-1/6*sqrt(6)})--(0,0,{-1/4*sqrt(6)})--(1/2,{-1/6*sqrt(3)},{-1/6*sqrt(6)})--(1/2,{1/6*sqrt(3)},{-1/12*sqrt(6)});

\draw(-1/2,{1/6*sqrt(3)},{1/12*sqrt(6)})--(-1/2,{-1/6*sqrt(3)},{1/6*sqrt(6)})--(0,0,{1/4*sqrt(6)});

\draw(-1/2,{-1/6*sqrt(3)},{1/6*sqrt(6)})--(0,{-1/3*sqrt(3)},{1/12*sqrt(6)});

\draw[dashed](-1/2,{1/6*sqrt(3)},{-1/12*sqrt(6)})--(-1/2,{-1/6*sqrt(3)},{-1/6*sqrt(6)})--(0,0,{-1/4*sqrt(6)});

\draw(0,{1/3*sqrt(3)},{1/6*sqrt(6)})--(0,0,{1/4*sqrt(6)})--(1/2,{-1/6*sqrt(3)},{1/6*sqrt(6)})--(1/2,{1/6*sqrt(3)},{1/12*sqrt(6)});

\draw(0,{1/3*sqrt(3)},{1/6*sqrt(6)})--(0,{1/3*sqrt(3)},{-1/6*sqrt(6)})--(1/2,{1/6*sqrt(3)},{-1/12*sqrt(6)})--(1/2,{1/6*sqrt(3)},{1/12*sqrt(6)})--(0,{1/3*sqrt(3)},{1/6*sqrt(6)});

\draw[dashed](-.5,{-1/6*sqrt(3)},{-1/6*sqrt(6)})--(-.5,{-1/6*sqrt(3)},{1/6*sqrt(6)})--(-.5,{1/6*sqrt(3)},{1/12*sqrt(6)})--(-.5,{1/6*sqrt(3)},{-1/12*sqrt(6)})--(-.5,{-1/6*sqrt(3)},{-1/6*sqrt(6)});

\draw(0,{1/3*sqrt(3)},{-1/6*sqrt(6)})--(0,{1/3*sqrt(3)},{1/6*sqrt(6)})--(-.5,{1/6*sqrt(3)},{1/12*sqrt(6)})--(-.5,{1/6*sqrt(3)},{-1/12*sqrt(6)})--(0,{1/3*sqrt(3)},{-1/6*sqrt(6)});

\draw[dashed](0,{-1/3*sqrt(3)},{-1/12*sqrt(6)})--(0,{-1/3*sqrt(3)},{1/12*sqrt(6)})--(.5,{-1/6*sqrt(3)},{1/6*sqrt(6)})--(.5,{-1/6*sqrt(3)},{-1/6*sqrt(6)})--(0,{-1/3*sqrt(3)},{-1/12*sqrt(6)});

\draw(0,{-1/3*sqrt(3)},{1/12*sqrt(6)})--(.5,{-1/6*sqrt(3)},{1/6*sqrt(6)});

\draw(.5,{-1/6*sqrt(3)},{-1/6*sqrt(6)})--(.5,{-1/6*sqrt(3)},{1/6*sqrt(6)})--(.5,{1/6*sqrt(3)},{1/12*sqrt(6)})--(.5,{1/6*sqrt(3)},{-1/12*sqrt(6)})--(.5,{-1/6*sqrt(3)},{-1/6*sqrt(6)});

\draw[dashed](-1/2,{-1/6*sqrt(3)},{1/6*sqrt(6)})--(-1/2,{-1/6*sqrt(3)},{-1/6*sqrt(6)})--(0,{-1/3*sqrt(3)},{-1/12*sqrt(6)})--(0,{-1/3*sqrt(3)},{1/12*sqrt(6)})--(-1/2,{-1/6*sqrt(3)},{1/6*sqrt(6)});

\end{scope}

\end{tikzpicture}
\caption{Left: the Voronoi cell $V_{\mathrm{FCC}}$ of the ${\mathrm {FCC}}$ lattice. Right: the Voronoi cell $V_{\mathrm{HCP}}$ of the ${\mathrm {HCP}}$ lattice.}
\label{fig:VFCC}
\end{figure}

\begin{lemma}(Voronoi cell in the $\mathrm{FCC}$-lattice) \label{lemma:shapeFCC} Let  us take $x\in \mathcal{L}_{\mathrm{FCC}}$. Then 
\begin{align}\label{eq:VoronoiFCClemma}
{\mathcal V}_{\mathcal L_{\mathrm{FCC}}}(x) = x + V_{\mathrm{FCC}}\,, \quad \text{ where } V_{\mathrm{FCC}} := \left\{y \in \mathbb{R}^3\colon \max_{b \in \mathcal{N}_{\mathrm{FCC}}}\langle b,y\rangle \leq \frac{1}{2} \right\}\,.
\end{align}
Given $b_0 \in \mathcal{N}_{\mathrm{FCC}}$ the face  
\begin{align}\label{eq:faceFCClemma}
S_{b_0}:=\left\{y \in \mathbb{R}^3\colon \max_{b \in \mathcal{N}_{\mathrm{FCC}}}\langle b,y\rangle = \langle b_0,y\rangle = \frac{1}{2} \right\}
\end{align} 
is a rhombus with $\mathcal{H}^2(S_{b_0})=\frac{1}{4}\sqrt{2}$. Moreover, for each $b_0 \in \mathcal{N}_{\mathrm{FCC}}$ the face $S_{b_0}$ of ${\mathcal V}_{\mathrm{FCC}}(0)$ is shared with the Voronoi cell ${\mathcal V}_{\mathrm{FCC}}(b_0)$. Lastly, we have $|{\mathcal V}_{\mathrm{FCC}}(x)|= \frac{1}{2}\sqrt{2}$ for all $x\in \mathcal{L}_{\mathrm{FCC}}$.
\end{lemma}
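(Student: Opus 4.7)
The plan is to reduce the problem to the Voronoi cell at the origin by the Bravais character of the FCC lattice, obtain a one-sided inclusion essentially for free, and then match the two sides by matching their volumes. Since $\mathcal{L}_{\mathrm{FCC}}-x=\mathcal{L}_{\mathrm{FCC}}$ for every $x\in\mathcal{L}_{\mathrm{FCC}}$, the change of variable $y\mapsto y-x$ in \eqref{def:Voronoi} gives $\mathcal{V}_{\mathcal{L}_{\mathrm{FCC}}}(x)=x+\mathcal{V}_{\mathcal{L}_{\mathrm{FCC}}}(0)$, so it is enough to handle $x=0$. Moreover, for $y\in\mathcal{V}_{\mathcal{L}_{\mathrm{FCC}}}(0)$ and $b\in\mathcal{N}_{\mathrm{FCC}}$, expanding $|y|^{2}\leq|y-b|^{2}$ with $|b|=1$ gives $\langle b,y\rangle\leq 1/2$, so $\mathcal{V}_{\mathcal{L}_{\mathrm{FCC}}}(0)\subseteq V_{\mathrm{FCC}}$.

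Next I would identify the faces. By the point-group symmetry of $\mathcal{L}_{\mathrm{FCC}}$, which acts transitively on $\mathcal{N}_{\mathrm{FCC}}$, it suffices to analyze one face, say $S_{b_{1}}$ with $b_{1}=\tfrac{1}{\sqrt{2}}(1,1,0)$. Parametrizing the plane $\langle b_{1},y\rangle=1/2$ by $(y_{1},y_{3})$ with $y_{2}=\tfrac{1}{\sqrt{2}}-y_{1}$, a direct but slightly tedious case analysis of the residual constraints $\langle b,y\rangle\leq 1/2$ for $b\in\mathcal{N}_{\mathrm{FCC}}\setminus\{b_{1}\}$ shows that the effective ones collapse to $0\leq y_{1}\leq \tfrac{1}{\sqrt{2}}$ together with $|y_{3}|\leq\min\{y_{1},\tfrac{1}{\sqrt{2}}-y_{1}\}$. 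The extreme points of this planar convex region are $(0,\tfrac{1}{\sqrt{2}},0)$, $(\tfrac{1}{\sqrt{2}},0,0)$ and $\tfrac{1}{2\sqrt{2}}(1,1,\pm 1)$, whose connecting segments form a quadrilateral with perpendicularly bisected diagonals of lengths $1$ and $\tfrac{1}{\sqrt{2}}$; hence $S_{b_{1}}$ is a rhombus with $\mathcal{H}^{2}(S_{b_{1}})=\tfrac{1}{2}\cdot 1\cdot\tfrac{1}{\sqrt{2}}=\tfrac{\sqrt{2}}{4}$.

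To close the argument I would decompose $V_{\mathrm{FCC}}$ into twelve pyramids with apex $0$ over the faces $S_{b}$, each of height $|b|/2=1/2$, obtaining $|V_{\mathrm{FCC}}|=12\cdot\tfrac{1}{3}\cdot\tfrac{1}{2}\cdot\tfrac{\sqrt{2}}{4}=\tfrac{\sqrt{2}}{2}$. Since translates of $\mathcal{V}_{\mathcal{L}_{\mathrm{FCC}}}(0)$ by $\mathcal{L}_{\mathrm{FCC}}$ tile $\mathbb{R}^{3}$ up to a negligible set, $|\mathcal{V}_{\mathcal{L}_{\mathrm{FCC}}}(0)|=|T_{\mathrm{FCC}}|=\tfrac{\sqrt{2}}{2}$ from \eqref{eq:VolumeFCC}. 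The inclusion $\mathcal{V}_{\mathcal{L}_{\mathrm{FCC}}}(0)\subseteq V_{\mathrm{FCC}}$, together with equality of volumes of two closed convex bodies with non-empty interior, then forces $\mathcal{V}_{\mathcal{L}_{\mathrm{FCC}}}(0)=V_{\mathrm{FCC}}$. Finally, for face sharing, $S_{b_{0}}$ lies in the perpendicular bisector of $[0,b_{0}]$, so $|y|=|y-b_{0}|$ on it; combined with the tiling property this identifies $S_{b_{0}}$ with the common facet $\mathcal{V}_{\mathcal{L}_{\mathrm{FCC}}}(0)\cap \mathcal{V}_{\mathcal{L}_{\mathrm{FCC}}}(b_{0})$.

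The main obstacle is the explicit constraint enumeration for $S_{b_{1}}$: one must verify that none of the other ten nearest-neighbor constraints cuts off additional vertices of the face, and that no constraint coming from a farther lattice point is ever needed. The first part is handled by the sign/permutation case analysis sketched above, while the second is bypassed \emph{a posteriori} by the volume-matching argument, which guarantees that no further facets can appear without violating $|\mathcal{V}_{\mathcal{L}_{\mathrm{FCC}}}(0)|=\tfrac{\sqrt{2}}{2}$.
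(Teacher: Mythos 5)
Your proposal is correct, and for the substantive inclusion $V_{\mathrm{FCC}}\subseteq\mathcal{V}_{\mathcal{L}_{\mathrm{FCC}}}(0)$ it takes a genuinely different route from the paper's. The paper proves this inclusion directly (its Step~1.2): it shows that $\langle y,z\rangle\le\tfrac12|z|^2$ for \emph{every} $z\in\mathcal{L}_{\mathrm{FCC}}$ whenever $y\in V_{\mathrm{FCC}}$, via a two-case decomposition $z=\lambda_1 b_j+\lambda_2 b_k$ and $z=\lambda_1 b_1+\lambda_2 b_2+\lambda_3 b_3$ with explicit sign manipulations using $\langle b_i,b_j\rangle\ge 0$ and $b_i-b_j\in\mathcal{N}_{\mathrm{FCC}}$. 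You instead establish only the cheap inclusion $\mathcal{V}_{\mathcal{L}_{\mathrm{FCC}}}(0)\subseteq V_{\mathrm{FCC}}$, compute $|V_{\mathrm{FCC}}|=\tfrac{\sqrt2}{2}$ via the twelve congruent pyramids over the rhombic facets (each of height $\tfrac12$ and base area $\tfrac{\sqrt2}{4}$), recall $|\mathcal{V}_{\mathcal{L}_{\mathrm{FCC}}}(0)|=|T_{\mathrm{FCC}}|=\tfrac{\sqrt2}{2}$ from the tiling property, and conclude equality of the two compact convex bodies from the equality of volumes. This cleanly bypasses the constraint enumeration over far lattice points, at the price that the whole argument hinges on the facet-area computation being carried out exactly: an error there would not merely affect a constant, it would break the identification of the Voronoi cell. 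The paper, by contrast, performs both the direct two-sided inclusion and (independently, in its Step~4) the same tiling volume computation that you use, so your version is shorter while remaining rigorous; the face-corner computation and the symmetry reduction to a single facet $S_{b_1}$ are essentially identical in both approaches, and the face-sharing step (the facet lies on the perpendicular bisector of $[0,b_0]$, hence belongs to both cells) matches the paper's argument as well.
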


\begin{lemma}(Voronoi cell in the $\mathrm{HCP}$-lattice) \label{lemma:shapeHCP} Let  us take $x\in \mathcal{L}_{\mathrm{HCP}}$. Then 

\begin{align}\label{eq:VoronoiHCPlemma}
{\mathcal V}_{\mathcal L_{\mathrm{HCP}}}(x) = \begin{cases} x+V_{\mathrm{HCP}} &\text{if } x \in \mathrm{span}_{\mathbb{Z}}\{e_1,e_2,e_3\}\,,\\
x-V_{\mathrm{HCP}} &\text{if } x \in (v_1+\mathrm{span}_{\mathbb{Z}}\{e_1,e_2,e_3\})\,, 
\end{cases}
\end{align}
where 
\begin{align*}
V_{\mathrm{HCP}} := \left\{y \in \mathbb{R}^3\colon \max_{b \in \mathcal{N}_{\mathrm{HCP}}}\langle b,y\rangle \leq \frac{1}{2} \right\}\,.
\end{align*}
For $b_0 \in \mathcal{N}_{\mathrm{HCP}}$ we set 
\begin{align}\label{def:faceVHCP}
S_{b_0}:=\left\{y \in \mathbb{R}^3\colon \max_{b \in \mathcal{N}_{\mathrm{HCP}}}\langle b,y\rangle = \langle b_0,y\rangle = \frac{1}{2} \right\}\,.
\end{align}
If $b_0 \in \{\pm e_1,\pm e_2,\pm (e_1-e_2)\}$ the face  $S_{b_0}$  is a trapezoid of area $\frac{1}{4}\sqrt{2}$. If $b_0 \in \{v_1,v_1-e_1,v_1-e_2,v_1-e_3,v_1-e_1-e_3,v_1-e_2-e_3\} $ the face $S_{b_0}$  is a rhombus of area $\frac{1}{8}\sqrt{6}$. Moreover, for each $b_0 \in \mathcal{N}_{\mathrm{HCP}}$ the face $S_{b_0}$ is shared with the Voronoi cell $\mathcal{V}_{\mathcal L_{\mathrm{HCP}}}(b_0)$. Lastly, we have $|\mathcal{V}_{\mathcal L_\mathrm{HCP}}(x)|= \frac{1}{2}\sqrt{2}$ for all $x\in \mathcal{L}_{\mathrm{HCP}}$.
\end{lemma}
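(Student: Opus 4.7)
The plan is to follow the strategy of Lemma~\ref{lemma:shapeFCC}, adapting it to the $2$-lattice structure of $\mathcal{L}_\mathrm{HCP}$. Since every $b\in\mathcal{N}_\mathrm{HCP}$ satisfies $|b|=1$, the defining condition $|y|\leq|y-b|$ of $\mathcal{V}_{\mathcal{L}_\mathrm{HCP}}(0)$ reduces to $\langle b,y\rangle\leq\tfrac{1}{2}$, yielding at once the inclusion $\mathcal{V}_{\mathcal{L}_\mathrm{HCP}}(0)\subseteq V_\mathrm{HCP}$. For the second sublattice I would exploit the point reflection $\sigma(y)=v_1-y$, which preserves $\mathcal{L}_\mathrm{HCP}$ and interchanges its two sublattices; combined with translation invariance along $\mathrm{span}_\mathbb{Z}\{e_1,e_2,e_3\}$, this reduces the whole assertion to proving $V_\mathrm{HCP}=\mathcal{V}_{\mathcal{L}_\mathrm{HCP}}(0)$ along with the asserted face structure.

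The central computation is then the explicit description of $V_\mathrm{HCP}$. I would enumerate its vertices as triple intersections of the twelve bounding hyperplanes $\langle b,y\rangle=\tfrac{1}{2}$ with $b\in\mathcal{N}_\mathrm{HCP}$, retaining only those triples whose intersection satisfies all the remaining inequalities and thereby also verifying non-redundancy of each of the twelve half-spaces. By the $D_{3h}$ point symmetry of $\mathcal{N}_\mathrm{HCP}$, the six in-plane neighbours $\pm e_1,\pm e_2,\pm(e_1-e_2)$ form a single orbit and produce congruent trapezoid faces, while the six out-of-plane neighbours $v_1,v_1-e_1,v_1-e_2,v_1-e_3,v_1-e_1-e_3,v_1-e_2-e_3$ form another single orbit and produce congruent rhombic faces; direct coordinate calculations then deliver the asserted areas $\tfrac{1}{4}\sqrt{2}$ and $\tfrac{1}{8}\sqrt{6}$. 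Decomposing $V_\mathrm{HCP}$ into a central hexagonal prism (cut out by the in-plane half-spaces) and two congruent capping pieces (cut out by the out-of-plane ones) gives $|V_\mathrm{HCP}|=\tfrac{1}{2}\sqrt{2}$. Since the Voronoi cells always tile $\mathbb{R}^3$ and the translation/$\sigma$-symmetries force $|\mathcal{V}_{\mathcal{L}_\mathrm{HCP}}(x)|=|T_\mathrm{HCP}|/2=\tfrac{1}{2}\sqrt{2}$ for every $x\in\mathcal{L}_\mathrm{HCP}$, the inclusion $V_\mathrm{HCP}\subseteq\mathcal{V}_{\mathcal{L}_\mathrm{HCP}}(0)$ between two convex sets of equal volume forces equality. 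The shared-face statement $S_{b_0}=\mathcal{V}_{\mathcal{L}_\mathrm{HCP}}(0)\cap\mathcal{V}_{\mathcal{L}_\mathrm{HCP}}(b_0)$ is then immediate, because on the hyperplane $\langle b_0,y\rangle=\tfrac{1}{2}$ one has $|y|=|y-b_0|$.

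The main obstacle is the vertex enumeration and the face classification: unlike the $\mathrm{FCC}$ Voronoi cell, which is a rhombic dodecahedron whose twelve faces form a single orbit under the lattice symmetries, $V_\mathrm{HCP}$ carries only the smaller $D_{3h}$ point symmetry and two geometrically distinct face-orbits. One must therefore verify non-redundancy of all twelve defining half-spaces by hand and keep careful track of the incidence relations between vertices, edges and the two classes of faces, both to read off the areas and to perform the hexagonal-prism-plus-caps decomposition used for the volume.
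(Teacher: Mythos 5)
Your proposal takes a genuinely different route from the paper at the crux of the lemma, namely the inclusion $V_{\mathrm{HCP}} \subseteq \mathcal{V}_{\mathcal{L}_{\mathrm{HCP}}}(0)$. The paper proves this directly by checking $\langle y,z\rangle \leq \tfrac{1}{2}|z|^2$ for every $z \in \mathcal{L}_{\mathrm{HCP}}$ through a four-case inner-product analysis (sublattice component, in-/out-of-plane, sign pattern of coefficients). You instead prove only the easy inclusion $\mathcal{V}_{\mathcal{L}_{\mathrm{HCP}}}(0) \subseteq V_{\mathrm{HCP}}$, compute $|V_{\mathrm{HCP}}| = \tfrac{1}{2}\sqrt{2}$ by explicit geometric decomposition, deduce $|\mathcal{V}_{\mathcal{L}_{\mathrm{HCP}}}(0)| = |T_{\mathrm{HCP}}|/2 = \tfrac{1}{2}\sqrt{2}$ from the tiling, and conclude equality from the convexity fact that two nested compact convex sets of the same volume coincide. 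This is valid and trades the paper's case analysis for a polyhedral volume computation; since the vertex enumeration is needed anyway for the face-area claims, the extra cost is modest. Your point reflection $\sigma(y) = v_1 - y$, which indeed preserves $\mathcal{L}_{\mathrm{HCP}}$ and swaps the two sublattices, gives a cleaner derivation of the second case of \eqref{eq:VoronoiHCPlemma} than the paper's ``in a similar fashion.'' Two minor caveats: the inclusion in your concluding sentence should read $\mathcal{V}_{\mathcal{L}_{\mathrm{HCP}}}(0) \subseteq V_{\mathrm{HCP}}$ (a typo, as you prove the reverse direction), and the ``hexagonal prism plus caps'' decomposition needs some care -- the out-of-plane constraints are inactive only for $|y_3| \leq \tfrac{1}{12}\sqrt{6}$, while the trapezoidal faces persist up to $|y_3| = \tfrac{1}{6}\sqrt{6}$, so each cap is bounded by portions of both face families and is not a simple pyramid; this is a matter of careful execution rather than a gap.
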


\begin{proof}[Proof of Lemma \ref{lemma:shapeFCC}] We split the proof of the lemma into four steps. First, we prove \eqref{eq:VoronoiFCClemma}. In the second step, we show that each face is a rhombus and calculate its area. Lastly, we show that each neighboring Voronoi cell $V_{\mathrm{FCC}}(b)$, $b \in \mathcal{N}_{\mathrm{FCC}}$ shares one face with the Voronoi cell $V_{\mathrm{FCC}}(0)$.\\
\noindent\begin{step}{1}(Proof of \eqref{eq:VoronoiFCClemma}) To check \eqref{eq:VoronoiFCClemma}, since $\mathcal{L}_{\mathrm{FCC}}$ is a Bravais-lattice (see \eqref{def:FCC}), it suffices to consider the case $x=0$. Let $\mathcal V_{{\mathcal L_\mathrm {FCC}}}(0)$ denote the Voronoi cell of $\mathcal L_{\mathrm{FCC}}$ at $x=0$ defined according to \eqref{def:Voronoi}.\\
\noindent\begin{step}{1.1}($\mathcal V_{{\mathcal L_\mathrm {FCC}}}(0)\subset V_{\mathrm{FCC}}$) Let $y \in \mathcal V_{{\mathcal L_\mathrm {FCC}}}(0)$. By the very definition of Voronoi cell we have that for all $b\in{\mathcal N_{\mathrm{FCC}}}$ it holds
$
|y| \leq |y-b|\,.
$
Noting that $|b|=1$ for all $b \in \mathcal{N}_{\mathrm{FCC}} \subset \mathcal{L}_{\mathrm{FCC}}$, we have 
\begin{align*}
|y| \leq |y-b| \iff|y|^2 \leq |y-b|^2 = |y|^2 -2\langle b,y\rangle +|b|^2  \iff \langle b,y\rangle \leq \frac{1}{2}\,,
\end{align*}
that is the inclusion $\mathcal V_{\mathcal L_{\mathrm{FCC}}}(0) \subset V_{\mathrm{FCC}}$.
\end{step}\\
\noindent\begin{step}{1.2}($V_{\mathrm{FCC}} \subset \mathcal V_{\mathcal L_{\mathrm{FCC}}}(0)$) We show that for $y \in V_{\mathrm{FCC}}$ we have $|y|\leq |y-z|$ for all $z \in \mathcal{L}_{\mathrm{FCC}}$. This is equivalent to
\begin{align}\label{ineq:inclusionFCC}
y \in V_{\mathrm{FCC}}\implies\langle y,z\rangle \leq \tfrac{1}{2}|z|^2 \quad \text{ for  all } z \in \mathcal{L}_{\mathrm{FCC}}\,.
\end{align}
We first observe that if $z\in\mathcal N_{\mathrm{FCC}}$, \eqref{ineq:inclusionFCC} is trivial since $|z|=1$. Next, we prove \eqref{ineq:inclusionFCC} for all $z \in \mathcal{L}_{\mathrm{FCC}}\setminus \mathcal{N}_{\mathrm{FCC}}$. We distinguish two cases:
\begin{itemize}
\item[(a)] $z=\lambda_1 b_j +\lambda_2 b_k$ $\lambda_1,\lambda_2 \in \mathbb{Z}$, $j,k \in \{1,2,3\}, j \neq k\,$;
\item[(b)] $z=\lambda_1 b_1 +\lambda_2 b_2+\lambda_3 b_3$ $\lambda_1,\lambda_2,\lambda_3 \in \mathbb{Z}\,$.
\end{itemize}
\noindent \emph{Proof in case {\rm(a)}.} We only show the statement for  $z =\lambda_1b_1+\lambda_2b_2$ for $\lambda_1,\lambda_2 \in \mathbb{Z}$, the cases with any other combination of two vectors being analogous. If $\lambda_1 \lambda_2 \geq 0$, since $\langle b_1,b_2\rangle\geq 0$, we have
\begin{align*}
\langle y, z\rangle &= \langle y,\lambda_1b_1+\lambda_2b_2\rangle \leq \frac{1}{2}|\lambda_1 b_1|^2 +  \frac{1}{2}|\lambda_2 b_2|^2 \\&= \frac{1}{2}|\lambda_1 b_1 +\lambda_2 b_2|^2- \lambda_1\lambda_2\langle b_1,b_2\rangle \leq  \frac{1}{2}|\lambda_1 b_1 +\lambda_2 b_2|^2=\frac{1}{2}|z|^2\,.
\end{align*}
On the other hand, if $\lambda_1\lambda_2 \leq 0$ and without loss of generality $|\lambda_1|\leq |\lambda_2|$, noting that $b_1-b_2 \in \mathcal{N}_{\mathrm{FCC}}$,  we have 
\begin{align*}
\langle y, z\rangle&= \langle y,\lambda_1b_1+\lambda_2b_2\rangle = \langle y, (\lambda_2+\lambda_1) b_2 + \lambda_1(b_1-b_2)\rangle \leq \frac{1}{2}| (\lambda_2+\lambda_1) b_2|^2 + \frac{1}{2}|\lambda_1(b_1-b_2)|^2\\&= \frac{1}{2}|\lambda_1b_1+\lambda_2b_2|^2 -\lambda_1(\lambda_2+\lambda_1)\langle(b_1-b_2),b_2\rangle \leq \frac{1}{2}|\lambda_1b_1+\lambda_2b_2|^2\,.
\end{align*}
Here, the last inequality follows, since $|b_1|=|b_2|$ and therefore $\lambda_1(\lambda_2+\lambda_1)\langle(b_1-b_2),b_2\rangle\geq 0$. This concludes case {\rm (a)}.\\
\noindent\emph{Proof in case {\rm (b)}.} We now show that \eqref{eq:VoronoiFCClemma} holds true in the case of $b= \lambda_1b_1+\lambda_2b_2+\lambda_3b_3$ with $\lambda_i \in \mathbb{Z}$. We restrict to the case $\lambda_1 \geq 0, \lambda_2 \geq 0$ and $\lambda_3 \leq 0$, since if all $\lambda_i$ are of the same sign, \eqref{eq:VoronoiFCClemma} can be deduced from the fact that it holds true for $b\in \mathcal{N}_{\mathrm{FCC}}$ and the fact that $\langle b_j,b_k\rangle \geq 0$. Without loss of generality, we assume $|\lambda_2|\leq |\lambda_3|$. Hence, observing that $b_2-b_3\in\mathcal N_{\mathrm{FCC}}$, we have
\begin{align*}
\langle y, z\rangle&= \langle y,\lambda_1b_1+\lambda_2b_2+\lambda_3 b_3\rangle  =  \langle y,\lambda_1b_1+(\lambda_3+\lambda_2)b_3+\lambda_2(b_2-b_3)\rangle \\&\leq \frac{1}{2}|\lambda_1b_1+(\lambda_3+\lambda_2)b_3|^2+ \frac{1}{2}|\lambda_2(b_2-b_3)|^2 \\&= \frac{1}{2}|\lambda_1b_1+\lambda_2b_2+\lambda_3 b_3|^2 - \langle(\lambda_1b_1+(\lambda_3+\lambda_2)b_3),\lambda_2(b_2-b_3)\rangle \\&=\frac{1}{2}|\lambda_1b_1+\lambda_2b_2+\lambda_3 b_3|^2 - (\lambda_3+\lambda_2)\lambda_2\langle b_2-b_3,b_3\rangle \leq \frac{1}{2}|\lambda_1b_1+\lambda_2b_2+\lambda_3 b_3|^2\,.
\end{align*}
Here, the last inequality follows from $|b_2|=|b_3|$ and $\lambda_3+\lambda_2\leq 0$ whereas the equality in the last line is due to $\langle b_1,b_2\rangle= \langle b_1,b_3\rangle=\langle b_2,b_3\rangle$. This concludes case {\rm (b)} and with that Step 1.2.
\end{step}
\end{step}\\
\begin{step}{2}(The faces of the Voronoi cell) To show that each face of the Voronoi cell $V_\mathrm{FCC}$ is a rhombus with area $\frac{1}{4}\sqrt{2}$ we first exploit its symmetries. Let $i \in \{1,2,3\}$ and let $T_i\colon \mathbb{R}^3 \to \mathbb{R}^3$ be the linear mapping that flips the $i$-th entry, i.e.
\begin{align*}
(T_ix)_j =\begin{cases} - x_i &\text{if } i=j\,,\\
x_j &\text{otherwise.}
\end{cases}
\end{align*}
We observe that
\begin{align*}
T_i \mathcal{N}_{\mathrm{FCC}} = \{T_ib\colon b \in \mathcal{N}_{\mathrm{FCC}}\} = \mathcal{N}_{\mathrm{FCC}}\,, \text{ for all } i\in \{1,2,3\}.
\end{align*}
Moreover, given a permutation $\pi \in S_3$ we have that
\begin{align*}
\pi \mathcal{N}_{\mathrm{FCC}} = \{\pi b\colon b \in \mathcal{N}_{\mathrm{FCC}}\} = \mathcal{N}_{\mathrm{FCC}}\,.
\end{align*}
It therefore suffices to restrict only to the case in which the vector $b_0$ agrees with the vector $b_1 \in \mathcal{N}_{\mathrm{FCC}}$. We claim that this face has corners given by
\begin{align}\label{eq:cornersFCC}
c_1 = \left(\frac{1}{2}\sqrt{2},0,0\right)\,, c_2 = \left(0,\frac{1}{2}\sqrt{2},0\right)\,, c_3 = \frac{1}{4}\left(\sqrt{2},\sqrt{2},\sqrt{2}\right)\,,  c_4 = \frac{1}{4}\left(\sqrt{2},\sqrt{2},-\sqrt{2}\right)\,.
\end{align}
Note that, if this were true then it is easy to see that $S_{b_0}$ is a rhombus and $\mathcal{H}^2(S_{b_0})=\frac{1}{4}\sqrt{2}$. It remains to prove \eqref{eq:cornersFCC}. Let us denote by $y$ a corner of $S_{b_0}$. We can assume that $y_1,y_2 \geq 0$. Were this not the case, then there could be $b' \in \mathcal{N}_{\mathrm{FCC}}$ such that  $\langle b',y\rangle > \langle b,y\rangle$, thus contradicting the definition of $S_{b_0}$ in \eqref{eq:faceFCClemma}. If $y_1=0$ (or $y_2=0$), then $y_2=\frac{1}{2}\sqrt{2}$ (resp.   $y_1=\frac{1}{2}\sqrt{2}$) and since, $ \langle b',y\rangle\leq \frac{1}{2}$ for all $b'\in \mathcal{N}_{\mathrm{FCC}}$ we have $y_3=0$. Hence, we find the two corners with coordinates $(\frac{1}{2}\sqrt{2},0,0)$ and $(0,\frac{1}{2}\sqrt{2},0)$. Now, if $y_1 >0$ and $y_2 >0$, then assuming that $y_3 \geq 0$ we have that the corner is equal to $\langle b_1,y\rangle=\langle b_2,y\rangle=\langle b_3,y\rangle=\frac{1}{2}\sqrt{2}$. Thus, necessarily $y_1=y_2=y_3 =\frac{1}{4}\sqrt{2}$. If instead $y_3 <0$, then the corner is equal to $\langle b_1,y\rangle=\langle b_2,y\rangle=\langle b_1- b_3,y\rangle=\frac{1}{2}\sqrt{2}$ which implies $y_1=y_2=-y_3 =\frac{1}{4}\sqrt{2}$. Hence \eqref{eq:cornersFCC} holds true and this concludes Step~2.
\end{step}\\
\noindent 
\begin{step}{3}(Neighbors share faces)  We want to show that  for each $b_0 \in \mathcal{N}_{\mathrm{FCC}}$ we have that the face $S_{b_0}$ of $V_{\mathrm{FCC}}(0)$ is shared with the Voronoi cell $V_{\mathrm{FCC}}(b_0)$. By the symmetries shown in Step~2 it suffices to prove this statement only for $b_0=b_1$. Using \eqref{eq:cornersFCC} we see that the corners of the face $S_{b_0}$ of the Voronoi cell $V_{\mathrm{FCC}}(0)$ coincide with the corners of the face $S_{-b_0}+b_0$ of the Voronoi cell  $V_{\mathrm{FCC}}(b_0)$.
\end{step}\\
\noindent  
\begin{step}{4}(Volume of the Voronoi cell) In order to calculate the volume of the Voronoi cell we note that $\mathcal{L}_{\mathrm{FCC}}$ is a Bravais-lattice with spanning vectors $b_1,b_2,b_3$. Since, the Voronoi cells of all the points are the same, it suffices to calculate the fraction of points per unit volume. This, then gives also the volume per point. Since, the Voronoi cells are space filling the volume per point is equal to the volume of each Voronoi cell. Due to \eqref{def:periodicitycellFCC} we have that
\begin{align*}
|T_{\mathrm{FCC}}| = \frac{1}{2}\sqrt{2}\,.
\end{align*}
Furthermore, we have that
\begin{align*}
\bigcup_{x \in \mathcal{L}_{\mathrm{FCC}}} (x+ T_{{\mathrm{FCC}}}) =\mathbb{R}^3\,, \text{ and } \mathcal{L}_{\mathrm{FCC}} \cap T_{\mathrm{FCC}} = \{0\}\,.
\end{align*}
Hence, each points of the lattice occupies a volume $|T_{\mathrm{FCC}}|=\frac{1}{2}\sqrt{2}$ and the volume of the Voronoi cell must be the same. This concludes Step 3 and thus the proof of the lemma.
\end{step}
\end{proof}

\begin{proof}[Proof of Lemma \ref{lemma:shapeHCP}] We split the proof of the lemma into four steps. First,  we prove \eqref{eq:VoronoiHCPlemma}. In the second step, we show that $6$ of the faces are rhombi, the $6$ other faces are trapezoids, and we calculate the area of each face. Lastly, given  $x \in \mathcal{L}_{\mathrm{HCP}}$, we show that each neighboring Voronoi cell $\mathcal V_{\mathcal L_{\mathrm{HCP}}}(y), y \in \mathcal{N}_{\mathrm{HCP}}(x)$ shares a face with the Voronoi cell $V_{\mathrm{HCP}}(x)$.\\
\begin{step}{1}(Shape of the Voronoi cell) The purpose of this step is to prove \eqref{eq:VoronoiHCPlemma}. Here, we only show this equality in the case that $x=0$, the case $x\neq 0$ being treated in a similar fashion.
\begin{step}{1.1}($\mathcal V_{\mathcal L_{\mathrm{HCP}}}(0) \subset V_{\mathrm{HCP}}$) Given $y \in \mathcal V_{\mathcal L_{\mathrm{HCP}}}(0)$ we have that 
$
|y| \leq |y-b|\,.
$
Now, noting that $|b|=1$ for all $b \in \mathcal{N}_{\mathrm{HCP}} \subset \mathcal{L}_{\mathrm{HCP}}$, we have 
\begin{align*}
|y|^2 \leq |y-b|^2 = |y|^2 -2\langle y,b\rangle +|b|^2  \iff \langle b,y\rangle \leq \frac{1}{2}\,.
\end{align*}
This concludes Step 1.1.
\end{step}\\
\begin{step}{1.2}($V_{\mathrm{HCP}} \subset \mathcal V_{\mathcal L_{\mathrm{HCP}}}(0)$) We show that for $y \in V_{\mathrm{HCP}}$ we have $|y|\leq |y-z|$, for all $z \in \mathcal{L}_{\mathrm{HCP}}$. This is equivalent to
\begin{align}\label{ineq:inclusionHCP}
y \in V_{\mathrm{HCP}}\implies\langle y,z\rangle \leq \tfrac{1}{2}|z|^2 \quad \text{ for  all } z \in \mathcal{L}_{\mathrm{HCP}}\,.
\end{align}
Since, $|b|=1$ for all $b \in \mathcal{N}_{\mathrm{HCP}}$ \eqref{ineq:inclusionHCP} is true for all $b \in\mathcal{N}_{\mathrm{HCP}}$. Next, we prove \eqref{ineq:inclusionHCP} for all $z \in \mathcal{L}_{\mathrm{HCP}}\setminus \mathcal{N}_{\mathrm{HCP}}$. We distinguish several cases:
\begin{itemize}
\item[(a)] $z=\lambda_1 e_1 +\lambda_2 e_2, \lambda_1,\lambda_2 \in \mathbb{Z}\,;$
\item[(b)] $z=\lambda_1 e_1 +\lambda_2 e_2+\lambda_3 e_3, \lambda_1,\lambda_2,\lambda_3 \in \mathbb{Z}\,;$
\item[(c)] $z=v_1+\lambda_1 e_1 +\lambda_2 e_2, \lambda_1,\lambda_2 \in \mathbb{Z}\,;$
\item[(d)] $z=v_1+\lambda_1 e_1 +\lambda_2 e_2+\lambda_3 e_3, \lambda_1,\lambda_2,\lambda_3 \in \mathbb{Z}\,;$
\end{itemize}
\noindent \emph{Proof in case {\rm (a)}.} If $\lambda_1,\lambda_2 \geq 0$, using that $\langle e_1,e_2\rangle \geq 0$, we have
\begin{align*}
\langle y, z\rangle &= \langle y, \lambda_1 e_1 +\lambda_2 e_2 \rangle \leq \frac{1}{2}|\lambda_1 e_1|^2 + \frac{1}{2}|\lambda_2 e_2|^2\\& = \frac{1}{2}|\lambda_1 e_1 +\lambda_2e_2|^2 - \lambda_1\lambda_2\langle e_1,e_2\rangle \leq \frac{1}{2}|\lambda_1 e_1 +\lambda_2e_2|^2 =\frac{1}{2}|z|^2\,.
\end{align*}
On the other hand, if $\lambda_1\lambda_2\leq 0$ and without loss of generality $\lambda_1 \geq |\lambda_2|\geq 0$, noting that $e_2-e_1 \in \mathcal{N}_{\mathrm{HCP}}$, we have
\begin{align*}
\langle y, z\rangle &= \langle y, \lambda_1 e_1 +\lambda_2 e_2\rangle = \langle y, \lambda_2(e_2-e_1) + (\lambda_1+\lambda_2)e_1\rangle \leq \frac{1}{2}|\lambda_2(e_2-e_1)|^2+ \frac{1}{2}|(\lambda_2+\lambda_1)e_1|^2\\& = \frac{1}{2}|\lambda_1e_1+\lambda_2e_2|^2 - \lambda_2(\lambda_1+\lambda_2)\langle e_2-e_1,e_1\rangle \leq \frac{1}{2}|\lambda_1e_1+\lambda_2e_2|^2= \frac{1}{2}|z|^2\,.
\end{align*}
Here, the last inequality follows, since $\lambda_2\leq 0\leq \lambda_1+\lambda_2$ and $\langle e_2-e_1,e_1\rangle\leq 0$. This concludes case {\rm (a)}.\\
\noindent\emph{Proof in case {\rm (b)}.} We first show that $\langle y, e_3\rangle \leq \frac{1}{2}|e_3|^2$. Using that $v_1,v_1-e_1,v_1-e_2 \in \mathcal{N}_{\mathrm{HCP}}$, that $3v_1-e_1-e_2=\frac32 e_3$ we have
\begin{align*}
\langle y,e_3\rangle &= \frac{2}{3}\langle y,v_1 + v_1-e_1 +v_1-e_2\rangle \\&\leq \frac{1}{3}\left(\left|\frac{1}{3}(e_1+e_2)\right|^2+ \left|\frac{1}{3}(e_2-2e_1)\right|^2+ \left|\frac{1}{3}(e_1-2e_2)\right|^2\right) + \left|\frac{1}{2}e_3\right|^2\leq \frac{1}{2}|e_3|^2\,.
\end{align*}
Here, the last inequality follows by calculating the norms of $e_1+e_2, e_1-2e_2,e_2-2e_1$ and $e_3$ by using \eqref{def:vectors}. Note that now, the case of $z=\lambda_1e_1+\lambda_2e_2+\lambda_3e_3$ follows from case (a) using that $\langle e_3, e_1\rangle = \langle e_3, e_2\rangle = 0$.\\
\noindent \emph{Proof of case {\rm (c)}.} Let $z=v_1+\lambda_1e_1+\lambda_2 e_2$. If $\lambda_1,\lambda_2 \geq 0$ we have
\begin{align*}
\langle y,z\rangle &= \langle y, v_1 +\lambda_1 e_1 +\lambda_2 e_2,\rangle \leq 
\frac12|v_1|^2+\frac12|\lambda_1e_1|^2+\frac12|\lambda_2e_2|^2
\leq
\frac{1}{2}|v_1|^2 + \frac{1}{2}|\lambda_1 e_1+\lambda_2 e_2|^2 \\&= \frac{1}{2}|v_1+\lambda_1 e_1+\lambda_2e_2|^2 - \langle v_1, \lambda_1e_1+\lambda_2e_2\rangle\leq\frac{1}{2}|v_1+\lambda_1 e_1+\lambda_2e_2|^2 \,.
\end{align*}
The second inequality uses that $\langle e_1,e_2\rangle\geq0$ and the last inequality uses that $\langle v_1,e_1\rangle,\langle v_1,e_2\rangle \geq 0$. Now assume that $\lambda_1 \geq 0, \lambda_2< 0$. Then, since $\langle(v_1-e_2), e_1\rangle=0$ and $\langle v_1-e_2,e_2\rangle\leq 0$, again exploiting that $v_1-e_2\in\mathcal {N}_{\mathrm {HCP}}$ it holds that
\begin{align*}
\langle y,z\rangle &= \langle y, (v_1-e_2) + \lambda_1 e_1 +(\lambda_2+1) e_2  \rangle \leq \frac{1}{2}|v_1-e_2|^2 + \frac{1}{2}|\lambda_1 e_1 +(\lambda_2+1)e_2 |^2\\&= \frac{1}{2}|v_1+\lambda_1 e_1+\lambda_2 e_2|^2 - \langle v_1-e_2,\lambda_1e_1+(\lambda_2+1)e_2\rangle\leq \frac{1}{2}|v_1+\lambda_1 e_1+\lambda_2 e_2|^2= \frac{1}{2}|z|^2\,.
\end{align*}
The case where $\lambda_1 <0,\lambda_2\geq 0$ (resp.~$\lambda_1, \lambda_2 <0$) is being treated in a similar fashion by replacing $v_1-e_2$ with $v_1-e_1$ (resp.~$v_1-e_1-e_2$).\\
\noindent \emph{Proof of case {\rm (d)}.} Here, we only treat the case of $z= v_1 +\lambda_1 e_1 +\lambda_2 e_2+\lambda_3 e_3$, $\lambda_3 \geq 0$.  Since $\langle v_1 +\lambda_1 e_1 +\lambda_2 e_2, e_3\rangle\geq 0$, we have
\begin{align*}
\langle y,z\rangle &= \langle y,v_1 +\lambda_1 e_1 +\lambda_2 e_2+\lambda_3 e_3\rangle \leq \frac{1}{2}|v_1 +\lambda_1 e_1 +\lambda_2 e_2|^2 +\frac{1}{2}|\lambda_3 e_3|^2\\&= \frac{1}{2}|v_1 +\lambda_1 e_1 +\lambda_2 e_2+\lambda_3 e_3|^2 - \langle v_1 +\lambda_1 e_1 +\lambda_2 e_2, e_3\rangle\leq \frac{1}{2}|v_1 +\lambda_1 e_1 +\lambda_2 e_2+\lambda_3 e_3|^2=\frac{1}{2}|z|^2\,.
\end{align*}
The case of $\lambda_3 <0$ follows by replacing $v_1$ with $v_1-e_3$ in the last two cases {\rm (c)} and {\rm (d)}. 
This concludes Step 1.2 and, together with Step 1.1, shows \eqref{eq:VoronoiHCPlemma}.
\end{step}
\end{step}\\
\noindent
\begin{step}{2}(The faces of the Voronoi cell) In order to calculate the faces of $V_{\mathrm{HCP}}$ we use \eqref{eq:VoronoiHCPlemma} and exploit its symmetries. We note that if $R \in SO(3)$ is any rotation of integer multiples of $2\pi/3$ around the $x_3$-axis we have that 
\begin{align}\label{eq:rotationalsymmetryHCP}
R\mathcal{N}_{\mathrm{HCP}}=\{Rb\colon b \in \mathcal{N}_{\mathrm{HCP}}\} = \mathcal{N}_{\mathrm{HCP}}\,.
\end{align}
Moreover, if $T_3 \colon \mathbb{R}^3\to \mathbb{R}^3$ is the reflection with respect to the $(x_1,x_2)$-plane, i.e. 
\begin{align}\label{def:T3}
(T_3x)_j := \begin{cases} x_j &j =1,2\,,\\
-x_3 &j=3\,,
\end{cases} 
\end{align}
we have that
\begin{align}\label{eq:reflectionsymmetryHCP}
T_3\mathcal{N}_{\mathrm{HCP}} = \{T_3 b \colon b \in \mathcal{N}_{\mathrm{HCP}}\} = \mathcal{N}_{\mathrm{HCP}}\,.
\end{align}
Exploiting \eqref{eq:rotationalsymmetryHCP} and \eqref{eq:reflectionsymmetryHCP}, it suffices to find the corners of $S_{b_0}$ in \eqref{def:faceVHCP}  for
\begin{align*}
{\rm (a)}\quad b_0=e_1\,,\quad {\rm (b)}\quad b_0 = -e_1\,,\quad {\rm (c)}\quad b_0=v_1\,.
\end{align*}
\emph{Corners in case {\rm (a)}.}  We claim that in the case of $b_0=e_1$ that the corners of $S_{b_0}$ are given by the points
\begin{align}\label{eq:cornersHCPa}
\begin{split}
&c_1 = \left(\frac{1}{2},\frac{1}{6}\sqrt{3},\frac{1}{12}\sqrt{6}\right)\,,  c_2 = \left(\frac{1}{2},\frac{1}{6}\sqrt{3},-\frac{1}{12}\sqrt{6}\right)\,,\\&c_3 = \left(\frac{1}{2},-\frac{1}{6}\sqrt{3},\frac{1}{6}\sqrt{6}\right)\,, c_4 = \left(\frac{1}{2},-\frac{1}{6}\sqrt{3},-\frac{1}{6}\sqrt{6}\right)\,.
\end{split}
\end{align}
In particular, the face $S_{b_0}$ is a trapezoid with two bases of length $\frac{1}{6}\sqrt{6}$, $\frac{1}{3}\sqrt{6}$ and height $\frac{1}{3}\sqrt{3}$. Hence, $\mathcal{H}^2(S_{b_0})=\frac{1}{4}\sqrt{2}$. It remains to prove \eqref{eq:cornersHCPa}. Let $y \in S_{b_0}$ be a corner. Due to \eqref{eq:reflectionsymmetryHCP}, we can assume that $y_3\geq 0$, since the other corners are just found by applying the mapping $T_3$ (see \eqref{def:T3}) to the corners with positive coordinates. By the definition of $S_{b_0}$ we have that $\langle y,e_1\rangle \geq \langle y,e_1-e_2\rangle$ which is equivalent to $\langle y,e_2\rangle \geq 0$.  Now, if $\langle y, e_2\rangle > 0$, then $y$ is given by $\langle y, e_1\rangle = \langle y, e_2\rangle = \langle y, v_1\rangle=\frac{1}{2}$. This linear system has a unique solution given by $c_1=\left(\frac{1}{2},\frac{1}{6}\sqrt{3},\frac{1}{12}\sqrt{6}\right)$. On the other hand, if $\langle y, e_2 \rangle = 0$, then $y$ is given by $\langle y, e_2\rangle =0, \langle y, e_1\rangle = \langle y, v_1\rangle=\frac{1}{2}$. The unique solution of this linear system is given by $c_3 = \left(\frac{1}{2},-\frac{1}{6}\sqrt{3},\frac{1}{6}\sqrt{6}\right)$. This shows \eqref{eq:cornersHCPa} and concludes case {\rm (a)}.\\
\noindent \emph{Corners in case {\rm (b)}.}
We claim that in the case of $b_0=-e_1$ that the corners of $S_{b_0}$ are given by the points
\begin{align}\label{eq:cornersHCPb}
\begin{split}
&c_1 = \left(-\frac{1}{2},\frac{1}{6}\sqrt{3},\frac{1}{12}\sqrt{6}\right)\,,  c_2 = \left(-\frac{1}{2},\frac{1}{6}\sqrt{3},-\frac{1}{12}\sqrt{6}\right)\,,\\&c_3 = \left(-\frac{1}{2},-\frac{1}{6}\sqrt{3},\frac{1}{6}\sqrt{6}\right)\,, c_4 = \left(-\frac{1}{2},-\frac{1}{6}\sqrt{3},-\frac{1}{6}\sqrt{6}\right)\,.
\end{split}
\end{align}
In particular, the face $S_{b_0}$ is a trapezoid with two bases of length $\frac{1}{6}\sqrt{6}$, $\frac{1}{3}\sqrt{6}$ and height $\frac{1}{3}\sqrt{3}$. Hence, $\mathcal{H}^2(S_{b_0})=\frac{1}{4}\sqrt{2}$. It remains to prove \eqref{eq:cornersHCPb}.  Let $y \in S_{b_0}$ be a corner. Due to \eqref{eq:reflectionsymmetryHCP}, as in case $\rm{(a)}$, we can assume that $y_3\geq 0$. By the definition of $S_{b_0}$ we have that $\langle y,-e_1\rangle \geq \langle y,e_2-e_1\rangle$ which is equivalent to $\langle y,e_2\rangle \leq 0$. Now, if $\langle y, e_2\rangle=0$, then $y$ is given by $\langle y,e_2\rangle=0, \langle y,v_1-e_1\rangle=\langle y,-e_1\rangle=\frac{1}{2}$. We see that the unique solution of this linear system is given by $c_1= \left(-\frac{1}{2},\frac{1}{6}\sqrt{3},\frac{1}{12}\sqrt{6}\right)$. On the other hand, if $\langle y, e_2 \rangle <0$, then $y$ is given by $\langle y,v_1\rangle=0, \langle y,-e_1\rangle=\langle y,-e_2\rangle=\frac{1}{2}$. The unique solution is now given by $c_3= \left(-\frac{1}{2},-\frac{1}{6}\sqrt{3},\frac{1}{6}\sqrt{6}\right)$. This shows \eqref{eq:cornersHCPb} and concludes case {\rm (b)}.\\
\noindent \emph{Corners in case {\rm (c)}.}
We claim that in the case of $b_0=v_1$ that the corners of $S_{b_0}$ are given by the points
\begin{align}\label{eq:cornersHCPc}
\begin{split}
&c_1 = \left(\frac{1}{2},\frac{1}{6}\sqrt{3},\frac{1}{12}\sqrt{6}\right)\,,  c_2 = \left(0,0,\frac{1}{4}\sqrt{6}\right)\,,\\&c_3 = \left(0,\frac{1}{3}\sqrt{3},\frac{1}{6}\sqrt{6}\right)\,, c_4 = \left(\frac{1}{2},-\frac{1}{6}\sqrt{3},\frac{1}{6}\sqrt{6}\right)\,.
\end{split}
\end{align}
In particular, the face $S_{b_0}$ is a rhombus. Hence, $\mathcal{H}^2(S_{b_0})= \frac{1}{8}\sqrt{6}$. It remains to prove \eqref{eq:cornersHCPa}. Let $y \in S_{b_0}$ be a corner. By the definition of $S_{b_0}$ we have that $\langle y,v_1\rangle \geq \langle y,v_1-e_1\rangle,\langle y,v_1-e_2\rangle$ which is equivalent to $\langle y,e_1\rangle,\langle y,e_2\rangle \geq 0$.  Now if, $\langle y,e_2\rangle >0$ then the corner solves the linear system $\langle y,e_1\rangle = \langle y,e_2\rangle = \langle y,v_1\rangle=\frac{1}{2}$. Its unique solution is $c_1=(\frac{1}{2},\frac{1}{6}\sqrt{3},\frac{1}{12}\sqrt{6})$. On the other hand if  $\langle y,e_2\rangle=0$, then the corners are given by those $y$ such that $\langle y,e_2\rangle=0,  \langle y,e_1\rangle = \langle y,v_1\rangle=\frac{1}{2}$ or $\langle y,e_1\rangle=\langle y,e_2\rangle =0, \langle y,v_1\rangle=\frac{1}{2}$. These points have coordinates $c_2=(\frac{1}{2},-\frac{1}{6}\sqrt{3},\frac{1}{6}\sqrt{6})$ and $c_3=(0,0,\frac{1}{4}\sqrt{6})$. Finally, if $\langle y,e_1\rangle=0$ and $\langle y,e_2\rangle >0$, then $y$ is obtained by solving $\langle y,e_1\rangle=0,  \langle y,e_2\rangle = \langle y,v_1\rangle=\frac{1}{2}$. Hence it has coordinates $c_4=(0,\frac{\sqrt{3}}{3},\frac{\sqrt{6}}{6})$. This proves \eqref{eq:cornersHCPc} and concludes Step 2.
\end{step}\\
\noindent \begin{step}{3}(Neighbors share faces) We want to show that  for each $b_0 \in \mathcal{N}_{\mathrm{HCP}}$ we have that the face $S_{b_0}$ of $\mathcal V_{\mathcal L_{\mathrm{HCP}}}(0)$ is shared with the Voronoi cell $\mathcal V_{\mathcal L_{\mathrm{HCP}}}(b_0)$. By Step 1 we have that $\mathcal V_{\mathcal L_{\mathrm{HCP}}}(0)= V_{\mathrm{HCP}}$ and $\mathcal V_{\mathcal L_{\mathrm{HCP}}}(b_0) = b_0 - V_{\mathrm{HCP}}$. Hence, they share the side $\langle y,b_0\rangle = \frac{1}{2} = \langle b_0-y,b_0\rangle$. 
\end{step}

\noindent \begin{step}{4}(Volume of the Voronoi cell)
In order to calculate the volume of the Voronoi cell we note that $\mathcal{L}_{\mathrm{HCP}}$ is periodic with respect to the vectors $e_1,e_2,e_3$. Since the Voronoi cells of all the points occupy the same volume, it suffices to calculate the fraction of points per unit volume. The inverse of this number is the volume per point. Since the Voronoi cells are space filling the volume per point is equal to the volume of each Voronoi cell. Due to \eqref{def:periodicitycellHCP} we have that
\begin{align*}
|T_{\mathrm{HCP}}| = \sqrt{2}\,.
\end{align*}
Furthermore, we have that
\begin{align*}
\bigcup_{x \in\mathrm{span}_{\mathbb{Z}}\{e_1,e_2,e_3\} } (x+ T_{{\mathrm{HCP}}}) =\mathbb{R}^3\,, \text{ and } \mathcal{L}_{\mathrm{HCP}} \cap T_{{\mathrm{HCP}}} = \{0, v_1\}\,.
\end{align*}
Hence, the volume per point is $\frac{1}{2}| T_{{\mathrm{HCP}}}|=\frac{1}{2}\sqrt{2}$ and it agrees with the volume of the Voronoi cell.  This concludes Step 4 and thus the proof of the lemma.
\end{step}
\end{proof}

We are now in the position to prove Theorem \ref{theorem:Gammaconvergence}.

\begin{proof}[Proof of Theorem \ref{theorem:Gammaconvergence}] All the statements are consequences of Proposition \ref{prop:Compactnessgeneral}, Lemma \ref{lemma:equivalenceoftopgeneral}, Theorem \ref{theorem:intrepperiodic} and Lemma \ref{lemma:concentration} once we show that $\mathcal{L}_{\mathrm{FCC}}$ and $\mathcal{L}_{\mathrm{HCP}}$ are periodic admissible sets (according to Definition \ref{def:admissible} and Definition \ref{def:periodicity}) and we observe that, due to Lemma \ref{lemma:shapeFCC} and Lemma \ref{lemma:shapeHCP}, $\mathcal{N}_{\mathrm{FCC}}(x)= \mathcal{NN}(x)$ (in the sense of Definition \ref{def:nearestneighbours}) as well as $\mathcal{N}_{\mathrm{HCP}}(x)= \mathcal{NN}(x)$ in the respective cases. We first show that both lattices are admissible sets. Let us first observe that 
\begin{align*}
(T_{\mathrm{FCC}} +x) \cap \mathcal{L}_{\mathrm{FCC}}\neq \emptyset \quad \text{and}\quad (T_{\mathrm{HCP}} +x) \cap \mathcal{L}_{\mathrm{HCP}}\neq \emptyset \text{ for all } x\in \mathbb{R}^3\,.
\end{align*}
Therefore, (L1) is satisfied for both $\mathcal{L}_{\mathrm{FCC}}$ and $\mathcal{L}_{\mathrm{HCP}}$ with 
\begin{align*}
R:= \max\{\mathrm{diam}(T_{\mathrm{FCC}}),\mathrm{diam}(T_{\mathrm{FCC}})\}\leq \max\left\{\sum_{k=1}^3|e_k|, \sum_{k=1}^3|b_k|\right\} <+\infty\,,
\end{align*}
where we recalled Definition \ref{def:periodicitycellFCC} and Definition \ref{def:periodicitycellHCP}.
 On the other hand, (L2) is satisfied with $r=1$, see the discussion at the definition of the FCC and HCP lattice in Section \ref{sec:Notation}.  Concerning periodicity: We observe that for all $z=(z_1,z_2,z_3) \in \mathbb{Z}^3$ we have
\begin{align*}
\mathcal{L}_{\mathrm{FCC}} = \mathcal{L}_{\mathrm{FCC}} + \sum_{k=1}^3 z_kb_k\,,\text{ and }  \mathcal{L}_{\mathrm{HCP}} =  \mathcal{L}_{\mathrm{HCP}} +\sum_{k=1}^3 z_ke_k 
\end{align*}
and thus both $\mathcal{L}_{\mathrm{FCC}}$ and $\mathcal{L}_{\mathrm{HCP}}$ are periodic according to Definition \ref{def:periodicity}. \GGG The statement follows by Theorem \ref{theorem:intrepperiodic} with $c_{nn}(x)=1$. \EEE
\end{proof}
\section{General periodic lattices}\label{appendix:general}
This section deals with integral representation and concentrated-compactness properties of energies defined on general periodic lattices. 

\begin{definition}\label{def:admissible}
Let $\Sigma \subset \mathbb{R}^n$ be a countable set of points in $\mathbb{R}^n$. We call $\Sigma$ an \emph{admissible} set of points if the following two conditions hold:
\begin{itemize}
\item[(L1)] There exists $R>0$ such that $\inf_{ x\in \mathbb{R}^n} \#(\Sigma \cap B_R(x) ) \geq 1$;
\item[(L2)] There exists $r>0$ such that $ \mathrm{dist}(x,\Sigma \setminus \{x\}) \geq r$ for all $x\in \Sigma$.
\end{itemize}
\end{definition}
\begin{definition} \label{def:nearestneighbours} We define the Voronoi cell of $x \in \Sigma$ as
\begin{align}\label{def:VoronoiSigma}
\mathcal{V}(x):=\{z \in \mathbb{R}^n \colon |x-z|\leq |y-z| \text{ for all } y \in \Sigma\}\,.
\end{align}
 The set of nearest neighbors of $\Sigma$ is defined by
\begin{align*}
\mathcal{NN}(\Sigma) :=\{(x,y) \in \Sigma\times \Sigma \colon \mathcal{H}^{n-1}(\mathcal{V}(x) \cap \mathcal{V}(y))>0\}\,,
\end{align*}
 We set $\mathcal{NN}(x)= \{y\in \Sigma \colon (x,y) \in \mathcal{NN}(\Sigma)\}$. Given $\varepsilon>0$ we denote by $\varepsilon\Sigma := \{\varepsilon x \colon x \in \Sigma\}$ and for $x \in \varepsilon\Sigma$ we set $\mathcal{V}_\varepsilon(x)= \varepsilon\mathcal{V}(\varepsilon^{-1} x)$ the Voronoi cell of $x \in \varepsilon \Sigma$, and $\mathcal{NN}_\varepsilon(x) = \{y \in \varepsilon\Sigma \colon \varepsilon^{-1}(x,y) \in \mathcal{NN}(\Sigma)\}$ the set of nearest neighbors of $x$ in $ \varepsilon\Sigma$.  
\end{definition}
 \GGG
We now define for $u \colon \Sigma \to\{0,1\}$ the two energy functionals given by
\begin{align}\label{def:Feps}
F_\varepsilon(u,A)  := \underset{\varepsilon x \in A}{\sum_{(x,y) \in \mathcal{NN}(\Sigma)}} \varepsilon^{n-1} c_{nn}(x-y) |u(\varepsilon x)-u(\varepsilon  y)|\,
\end{align}
and 
\begin{align}\label{def:hatFeps}
\hat F_\varepsilon(u,A)  := \underset{\varepsilon x, \varepsilon y \in A}{\sum_{(x,y) \in \mathcal{NN}(\Sigma)}} \varepsilon^{n-1} c_{nn}(x-y) |u(\varepsilon x)-u(\varepsilon  y)|\,,
\end{align}
where $c_{nn} \colon \mathbb{R}^n \to [0,+\infty]$ satisfies 
\begin{align}\label{ass:boundcnn}
C^{-1}\leq c_{nn}(x) \leq C\,, \quad \text{for all } x\in \mathbb{R}^n\,.
\end{align}
When $A=\mathbb{R}^n$ we omit the dependence on it and write $F_\varepsilon(u)=F_\varepsilon(u,\mathbb{R}^n)$ and $\hat F_\varepsilon(u)= \hat F_\varepsilon(u,\mathbb{R}^n)$. \begin{remark}(Difference between $F_\varepsilon$ and $\hat F_\varepsilon$) We want to point out the difference between $F_\varepsilon$ and $\hat F_\varepsilon$: In the formula defining $F_\varepsilon$ the sum is taken over all $(x,y) \in \mathcal{NN}(\Sigma)$ such that $\varepsilon x \in A$. Instead in the case of $\hat{F}_\varepsilon$ the sum takes only those $(x,y) \in \mathcal{NN}(\Sigma)$ such that both $\varepsilon x \in A$ and $\varepsilon y \in A$. The functional $F_\varepsilon(u,\cdot)$ is an additive set function on disjoint sets, i.e., given $A,B \subset \mathbb{R}^n$ such that $A\cap B =\emptyset$, we have 
\begin{align*}
F_\varepsilon(u,A\cup B) = F_\varepsilon(u,A) + F_\varepsilon(u,B)\,,
\end{align*}
whereas $\hat F_\varepsilon(u,\cdot)$ is only super-additive on disjoint sets. Our $\Gamma$-convergence result will be stated for the functional $F_\varepsilon$. The reason for us to introduce $\hat{F}_\varepsilon$ is that our proof will use the integral representation result proven in \cite{AliCicRuf}, see Theorem \ref{theorem:AliCicRuf}. However, we will show later on that the $\Gamma$-convergence of $F_\varepsilon$ is equivalent to that of $\hat F_\varepsilon$. 
\end{remark}
Given $X \subset \varepsilon\Sigma$ we write with a slight abuse of notation
\begin{align*}
F_\varepsilon(X,A)= F_\varepsilon(\chi_X,A)\,.
\end{align*}
Note that, due to \eqref{ass:boundcnn}, we have
\begin{align*}
C^{-1}\sum_{x \in X} \varepsilon^{n-1} \#(\mathcal{NN}_\varepsilon(x)\setminus X)\leq F_{\varepsilon}(X,A) \leq C\sum_{x \in X} \varepsilon^{n-1} \#(\mathcal{NN}_\varepsilon(x)\setminus X)\,.
\end{align*}
Hypothesis \eqref{ass:boundcnn} corresponds to   \cite[Hypothesis 1]{AliCicRuf} in the case that, according to the notation in  \cite[Equation (5.23)]{AliCicRuf},  $c_{nn}^\varepsilon(x,y)=c_{nn}(x-y)$ and $c_{lr}^\varepsilon(x,y)=0$. It is worth observing that in \cite{AliCicRuf} a more general class of functionals was investigated, namely those for which also certain long-range interactions between points in $\Sigma$ contribute to the energy, i.e., $c_{lr}(x,y)\neq 0$. For the sake of exposition and simplicity, here we consider the case $c_{lr}=0$, that is the energy accounts only for the nearest neighbor interactions. However, with some more involved multi-scale constructions, all the statements below extend to the more general case where also long-range interactions are considered. 
\begin{definition}\label{def:extension} Given $X\subset \varepsilon\mathcal{L}$ we define the rescaled empirical measures associated to $X$  as
\begin{align}\label{def:empiricalmeasures}
\mu_\varepsilon = \varepsilon^n \sum_{x \in X} \delta_x\,.
\end{align}
Furthermore, recalling \eqref{def:VoronoiSigma}, we define 
\begin{align}\label{def:piecewiseconstantinterpolant}
V_\varepsilon(X) :=\bigcup_{x\in X} {\mathcal V}_{\varepsilon}(x)\,.
\end{align}
Henceforth, we drop the dependence on $X$ and simply write $V_\varepsilon$.
Given $A\subset \mathbb{R}^n$ open with $\partial A \in \mathrm{Lip}$, with slight abuse of notation we define $F_\varepsilon \colon \mathcal{M}_+(A)\to [0,+\infty]$ (similarly $\hat F_\varepsilon \colon \mathcal{M}_+(A) \to [0,+\infty]$) by
\begin{align*}
F_\varepsilon(\mu,A) = \begin{cases} F_\varepsilon(X,A) &\mu \text{ is given by } \eqref{def:empiricalmeasures} \text{ for some } X\subset \varepsilon\mathcal{L}\,;\\
+\infty &\text{otherwise.}
\end{cases}
\end{align*}
Additionally, we define  $F_\varepsilon \colon L^1_{\mathrm{loc}}(A)\to [0,+\infty]$ (similarly $\hat F_\varepsilon \colon L^1_{\mathrm{loc}}(A)\to [0,+\infty]$) by
\begin{align*}
F_\varepsilon(u,A) = \begin{cases} F_\varepsilon(X,A) &u=\chi_{V_\varepsilon} \text{ and } V_\varepsilon \text{ is given by } \eqref{def:piecewiseconstantinterpolant} \text{ for some } X \subset \varepsilon\mathcal{L}\,;\\
+\infty &\text{otherwise.}
\end{cases}
\end{align*}
\end{definition}
It is necessary for us to introduce two different domains of definition for the extended functional $F_\varepsilon$, since we want to make use of \cite[Theorem 5.5]{AliCicRuf}. As it will turn out the two types of extension are equivalent, cf.~Lemma \ref{lemma:equivalenceoftopgeneral} and Corollary \ref{cor:equivalenceGamma}.

\GGG Let $(\Omega,\mathbb{P},\mathcal{F})$ be a probability space. Hereafter we recall some definitions from \cite{AliCicRuf} (Definition~5.1 and Definition~5.4):

\begin{definition} \label{def:groupaction}  We say that a family $(\tau_z)_{z \in \mathbb{Z}^n}$, $\tau_z \colon \Omega \to \Omega$, is an \emph{additive group action} on $\Omega$ if
\begin{align*}
\tau_{z_1+z_2} = \tau_{z_2} \circ \tau_{z_1} \quad \text{for all } z_1,z_2 \in \mathbb{Z}^n\,.
\end{align*}
Such an additive group action is called \emph{measure preserving} if
\begin{align*}
\mathbb{P}(\tau_zB) = \mathbb{P}(B) \quad \text{for all } B \in \mathcal{F}, z \in \mathbb{Z}^n\,.
\end{align*}
If in addition, for all $B \in \mathcal{F}$ we have
\begin{align*}
(\tau_z(B)=B \quad \text{for all } z\in \mathbb{Z}^n) \implies \mathbb{P}(B) \in \{0,1\}\,,
\end{align*}
then $(\tau_z)_{z \in \mathbb{Z}^n}$ is called \emph{ergodic}.
\end{definition}

\begin{definition}\label{def:statergod} A random variable $\mathcal{L} \colon \Omega \to (\mathbb{R}^n)^{\mathbb{Z}^n}$, $\omega \mapsto \mathcal{L}(\omega) = \{\mathcal{L}(\omega)(i)\}_{i \in \mathbb{Z}^n}$is called a stochastic
lattice. We say that $\mathcal{L}$ is admissible if $\mathcal{L}(\omega)$ is admissible in the sense of Definition \ref{def:admissible} and the constants $r,R$ can be chosen independent of $\omega$ $\mathbb{P}$-almost surely. The stochastic lattice $\mathcal{L}$ is said to be \emph{stationary} if
there exists a measure preserving group action $(\tau_z)_{z\in \mathbb{Z}^n}$ on $\Omega$ such that, for $\mathbb{P}$-almost every $\omega \in \Omega$,
$\mathcal{L}(\tau_z\omega) =\mathcal{L}(\omega)+z$.
If in addition $(\tau_z)_{z \in \mathbb{Z}^n}$ is ergodic, then $\mathcal{L}$ is called ergodic, too.
\end{definition} \EEE

 We now state a simplified version of  \cite[Theorem 5.5]{AliCicRuf}  which is enough for our purposes.

\begin{theorem}\label{theorem:AliCicRuf}(Stochastic homogenization of spin systems) \GGG Let $\mathcal{L}$ be a stationary and ergodic stochastic lattice and let $\hat F_\varepsilon$ be defined by \eqref{def:hatFeps}. Let $A\subset \mathbb{R}^n$ be open and bounded with $\partial A \in \mathrm{Lip}$. For $\mathbb{P}$-almost every $\omega$ the functionals $F_\varepsilon(\omega)$ $\Gamma$-converge with respect to the strong $L^1(A)$-topology to the functional $F_\mathrm{hom}\colon L^1(A) \to [0,+\infty]$ defined by \EEE
\begin{align*} 
F_\mathrm{hom}(u,A):=\begin{cases} \int_{J(u)\cap A} \varphi_{\mathrm{hom}}(\nu_u)\,\mathrm{d}\mathcal{H}^{n-1} &\text{if } u \in BV(A;\{0,1\}),\\
+\infty &\text{otherwise.}
\end{cases}
\end{align*}
The function $\varphi_{\mathrm{hom}} \colon \mathbb{R}^n \to [0,+\infty]$ is given by
\begin{align*}
\varphi_{\mathrm{hom}}(\nu) =\lim_{T\to +\infty} \frac{1}{T^{n-1}} \int_\Omega \inf\big\{&F(u,Q_T^\nu) \colon u \colon \mathcal{L}(\omega) \to \{0,1\}, \\&u(i) = u_\nu(i) \text{ for } i \in \mathcal{L}(\omega) \setminus Q^\nu_{T-l_T}\big\}\,\mathrm{d}\mathbb{P}(\omega)\,,
\end{align*}
where $l_T \to +\infty$ and $l_T/T \to 0$  as $T\to +\infty$.
\end{theorem}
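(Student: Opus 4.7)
The plan is to follow the standard two-pronged approach for stochastic homogenization of discrete surface energies: combine an abstract integral representation theorem with a multi-parameter subadditive ergodic theorem to identify the cell formula.

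First, I would establish compactness and a deterministic surface-type lower bound. Thanks to the upper and lower bounds on $c_{nn}$ in \eqref{ass:boundcnn}, the piecewise constant interpolant $\chi_{V_\varepsilon}$ satisfies, for any open $A$ with $\partial A\in\mathrm{Lip}$,
\[
C^{-1}\,\mathrm{Per}(V_\varepsilon,A)\le \hat F_\varepsilon(\chi_{V_\varepsilon},(A)_{\eta})\le C\,\mathrm{Per}(V_\varepsilon,A),
\]
for some $\eta=\eta(r,R)>0$, using (L1)--(L2) to control the Voronoi geometry uniformly in $\omega$. Hence bounded-energy sequences are precompact in $L^1_{\mathrm{loc}}$ with limits $\chi_V$, $V$ of finite perimeter. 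Define localized Γ-liminf/Γ-limsup functionals $\underline F(\omega)(u,A)$ and $\overline F(\omega)(u,A)$ of $\hat F_\varepsilon(\omega)(\cdot,A)$ on $L^1(A)$ and show they are inner regular, subadditive in $A$, and controlled from above and below by multiples of $\mathrm{Per}(\{u=1\},A)$.

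Second, I would invoke the general integral representation theorem for surface functionals on $BV(A;\{0,1\})$ (in the spirit of Bouchitt\'e--Fonseca--Leoni--Mascarenhas, as adapted to the discrete setting in \cite{AliCicRuf}). Combined with an abstract compactness-of-Γ-convergence argument, this yields that along any subsequence of $\varepsilon$ a Γ-limit $F(\omega)$ exists and has the form
\[
F(\omega)(u,A)=\int_{J(u)\cap A}\varphi(\omega;x,\nu_u(x))\,\mathrm{d}\mathcal{H}^{n-1},
\]
with density
\[
\varphi(\omega;x_0,\nu)=\limsup_{r\to 0}\frac{1}{r^{n-1}}\inf\bigl\{F(\omega)(u,Q^\nu_r(x_0)):u=u_{\nu,x_0}\text{ near }\partial Q^\nu_r(x_0)\bigr\}.
\]

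Third, to identify $\varphi$ via the cell formula I would use the Akcoglu--Krengel subadditive ergodic theorem. For fixed $\nu\in\mathbb{S}^{n-1}$ and any $(n-1)$-dimensional rectangle $A\subset\nu^\perp$ set
\[
m_\omega(A,\nu)=\inf\bigl\{\hat F_{1}(\omega)(u,A\times(-T,T)):u(i)=u_\nu(i)\text{ for }i\notin A_{l}\times(-T+l,T-l)\bigr\},
\]
with a boundary layer of width $l\to\infty$, $l/\mathrm{diam}(A)\to 0$. The cutting-and-pasting procedure made possible by the matching boundary datum $u_\nu$ shows that $A\mapsto m_\omega(A,\nu)$ is subadditive; stationarity of $\mathcal L$ under $(\tau_z)_{z\in\mathbb{Z}^n}$ transfers to stationarity of $m_\omega$ in the hyperplane $\nu^\perp$. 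The multi-parameter subadditive ergodic theorem then gives existence of the limit $\varphi_{\mathrm{hom}}(\nu)=\lim_T T^{-(n-1)}m_\omega(Q^\nu_T,\nu)$ $\mathbb{P}$-a.s., and ergodicity of $(\tau_z)$ forces this limit to equal its expectation, yielding the formula in the statement. A blow-up and stationarity argument then identifies $\varphi(\omega;x,\nu)=\varphi_{\mathrm{hom}}(\nu)$ for $\mathcal{H}^{n-1}$-a.e.\ $x\in J(u)$, eliminating both $\omega$ and $x$ from the density.

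The main obstacle lies in the third step: the boundary-layer parameter $l_T$ must be tuned so that (i) configurations attaining the infimum on $Q^\nu_T$ can be restricted to sub-cubes and concatenated without creating uncontrolled surface energy along the gluing interfaces, yet (ii) $l_T/T\to 0$ so that the ergodic theorem still detects the bulk surface tension. The recovery sequence for the Γ-limsup is then obtained by density of polyhedral interfaces in $BV(A;\{0,1\})$ combined with quasi-optimal cell configurations on each flat face, assembled via a partition-of-unity-type construction compatible with the discrete lattice. Once this is in place, the equivalence between $F_\varepsilon$ and $\hat F_\varepsilon$ (differing only by a boundary layer whose energy is controlled by $\mathcal H^{n-1}(\partial A)\,\varepsilon$) transfers the result from $\hat F_\varepsilon$ to $F_\varepsilon$.
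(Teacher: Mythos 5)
This theorem is not proved in the paper at all: it is quoted (in simplified form) from \cite[Theorem 5.5]{AliCicRuf}, as the text immediately preceding the statement says explicitly. So there is no in-paper proof to compare your sketch against. What you have written is an outline of the argument carried out in that reference, and as such it is a faithful reconstruction of the strategy there: deterministic perimeter bounds from (L1)--(L2), compactness plus the abstract integral-representation machinery for surface-type functionals on $BV(\cdot;\{0,1\})$, and identification of the cell formula via the Akcoglu--Krengel multi-parameter subadditive ergodic theorem using cut-and-paste constructions compatible with the planar boundary datum $u_\nu$. The tuning $l_T\to\infty$, $l_T/T\to 0$ is indeed the delicate point, exactly as you note.

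One genuine error appears in your final paragraph. The difference $F_\varepsilon(u,A)-\hat F_\varepsilon(u,A)$ is not $O(\mathcal H^{n-1}(\partial A)\,\varepsilon)$: the pairs $(x,y)$ with $\varepsilon x\in A$, $\varepsilon y\notin A$ number $O(\mathcal H^{n-1}(\partial A)\,\varepsilon^{1-n})$, each contributing $O(\varepsilon^{n-1})$, so the discrepancy is $O(\mathcal H^{n-1}(\partial A))=O(1)$ and does not vanish as $\varepsilon\to0$. The equivalence of the $\Gamma$-limits of $F_\varepsilon$ and $\hat F_\varepsilon$ is therefore not a trivial boundary-layer correction. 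In the paper this equivalence is established separately in Step~2 of the proof of Theorem~\ref{theorem:intrepperiodic}: the inequality $\hat F_\varepsilon\le F_\varepsilon$ gives the liminf bound, and the limsup bound uses a $BV$-extension of $V$ past $\partial A$ (via the extension theorem of \cite{GarciaBravo-Rajala:2021}), the inclusion $F_\varepsilon(\cdot,A)\le\hat F_\varepsilon(\cdot,(A)_\delta)$ for $\varepsilon<\delta/(2R)$, and then sending $\delta\to0$. That passage is in any case outside the scope of Theorem~\ref{theorem:AliCicRuf}, which concerns $\hat F_\varepsilon$; the appearance of ``$F_\varepsilon(\omega)$'' in the theorem statement is a typo for ``$\hat F_\varepsilon(\omega)$,'' as the subsequent use of the theorem in the proof of Theorem~\ref{theorem:intrepperiodic} confirms.
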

\begin{definition} \label{def:periodicity} Let $\mathcal{L} \subset \mathbb{R}^n$ be an admissible set of points. We say that $\mathcal{L}$ is periodic if there exists a basis $\{e_1,\ldots,e_n\} \subset \mathbb{R}^n$ such that
\begin{align*}
\mathcal{L}+e_k = \mathcal{L} \text{ for all } k =1,\ldots,n\,.
\end{align*}
We denote by
\begin{align*}
Q:=\left\{\sum_{k=1}^n \lambda_k e_k \colon 0\leq \lambda_k<1\right\}
\end{align*}
the periodicity cell of $\mathcal{L}$. We set
\begin{align}\label{def:rho}
\rho:= \frac{\#(\mathcal{L}\cap Q)}{|Q|}\,.
\end{align}
\end{definition}

In the following we assume, up to a change of coordinates that $\{e_1,\ldots, e_n\}$ is the standard orthonormal basis of $\mathbb{R}^n$.

We collect the following general properties of periodic admissible set of points. 
\begin{lemma}(Properties of periodic admissible sets) \label{lemma:propertiesadmissiblesets} Let $\mathcal{L}$ be a periodic admissible set of points. The following holds true:
\begin{itemize}
\item[(i)] $B_{r/2}(x) \subset \mathcal{V}(x) \subset B_R(x)$ for all $x \in \mathcal{L}$;
\item[(ii)] $\mathcal{NN}(x) \subset B_{2R}(x)$ for all $x\in \mathcal{L}$;
\item[(iii)] There exists $C=C(n,r,R) \in (0,+\infty)$ such that $\displaystyle\sup_{x \in \mathcal{L}} \#\mathcal{NN}(x) \leq C$. In particular, $\partial \mathcal{V}(x)$ is made out of finitely many $(n-1)$-dimensional polyhedral faces;
\item[(iv)] There exists $C=C_\mathcal{L}>0$ such that for all $X \subset \varepsilon\mathcal{L}$ and $A\subset \mathbb{R}^n$ there holds
\begin{align*}
 F_{\varepsilon}(X,A)  \leq C\,\mathrm{Per}(V_\varepsilon,(A)_{R\varepsilon}) \text{ and }  
\,\mathrm{Per}(V_\varepsilon,A) \leq C  F_{\varepsilon}(X,(A)_{R\varepsilon}) \,.
\end{align*}
\end{itemize}
\end{lemma}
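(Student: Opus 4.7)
The four claims are essentially geometric consequences of conditions (L1), (L2) and of the periodicity of $\mathcal L$. I would address them in the order stated, as each step feeds into the next.

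For (i), I would prove both inclusions by the triangle inequality. If $y\in B_{r/2}(x)$ and $z\in\mathcal L\setminus\{x\}$, then by (L2) $|z-x|\ge r$, hence $|y-z|\ge|z-x|-|y-x|\ge r/2\ge|y-x|$, so $y\in\mathcal V(x)$. Conversely, if $|y-x|>R$, then by (L1) there exists $z\in\mathcal L\cap B_R(y)$, and $z\ne x$, so $|y-z|\le R<|y-x|$ and $y\notin\mathcal V(x)$; thus $\mathcal V(x)\subset \overline{B_R(x)}$. Claim (ii) then follows at once: if $y\in\mathcal{NN}(x)$ and $z\in\mathcal V(x)\cap\mathcal V(y)$, then by (i) $|z-x|\le R$ and $|z-y|\le R$, so $|x-y|\le 2R$.

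For (iii), the Voronoi cells $\{\mathcal V(y):y\in\mathcal L\}$ partition $\mathbb R^n$ up to an $\mathcal H^n$-negligible set. Using (ii) and (i), for any $y\in\mathcal{NN}(x)$ we have $\mathcal V(y)\subset B_R(y)\subset B_{3R}(x)$, while $|\mathcal V(y)|\ge\omega_n(r/2)^n$ by (i). Summing yields
\[
\bigl(\#\mathcal{NN}(x)+1\bigr)\,\omega_n(r/2)^n\;\le\;\sum_{y\in\{x\}\cup\mathcal{NN}(x)}|\mathcal V(y)|\;\le\;|B_{3R}(x)|=\omega_n(3R)^n,
\]
which gives $\#\mathcal{NN}(x)\le (6R/r)^n$. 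The polyhedral structure of $\partial\mathcal V(x)$ then follows from the fact that $\mathcal V(x)$ is the intersection of the finitely many half-spaces $\{|\cdot-x|\le|\cdot-y|\}$, $y\in\mathcal{NN}(x)$.

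The main content is in (iv), and this is where I would use periodicity crucially. By Definition \ref{def:periodicity}, for every pair $(x,y)\in\mathcal{NN}(\mathcal L)$ there is a translate by some $z\in\mathbb Z^n$ (more precisely, by an integer combination of the basis vectors) bringing $x$ into the finite set $\mathcal L\cap Q$; since translations by $z$ preserve the lattice (and hence the neighbour relation and the Voronoi tessellation), the faces $\mathcal V(x)\cap\mathcal V(y)$ fall into finitely many congruence classes. Combined with (ii)--(iii), this yields constants $0<\alpha\le\beta<\infty$, depending only on $\mathcal L$, such that
\[
\alpha\;\le\;\mathcal H^{n-1}\!\bigl(\mathcal V(x)\cap\mathcal V(y)\bigr)\;\le\;\beta \qquad\text{for every }(x,y)\in\mathcal{NN}(\mathcal L).
\]
After rescaling, the face $\mathcal V_\varepsilon(\varepsilon x)\cap\mathcal V_\varepsilon(\varepsilon y)$ has $\mathcal H^{n-1}$-measure between $\alpha\varepsilon^{n-1}$ and $\beta\varepsilon^{n-1}$. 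Since $V_\varepsilon$ is a finite-perimeter polyhedral set whose reduced boundary is the union of those faces $\mathcal V_\varepsilon(\varepsilon x)\cap\mathcal V_\varepsilon(\varepsilon y)$ with exactly one endpoint in $X$,
\[
\mathrm{Per}(V_\varepsilon,B)=\sum_{\substack{\{x,y\}\subset\mathcal L,\,(x,y)\in\mathcal{NN}(\mathcal L)\\\chi_X(\varepsilon x)\ne\chi_X(\varepsilon y)}}\mathcal H^{n-1}\!\bigl((\mathcal V_\varepsilon(\varepsilon x)\cap\mathcal V_\varepsilon(\varepsilon y))\cap B\bigr)
\]
for any Borel $B$. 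To obtain the first inequality in (iv), note that a pair contributing to $F_\varepsilon(X,A)$ has $\varepsilon x\in A$, so the corresponding face lies in $B_{R\varepsilon}(\varepsilon x)\subset(A)_{R\varepsilon}$; using \eqref{ass:boundcnn} and $\mathcal H^{n-1}(\mathrm{face})\ge\alpha\varepsilon^{n-1}$ gives $F_\varepsilon(X,A)\le 2C\alpha^{-1}\mathrm{Per}(V_\varepsilon,(A)_{R\varepsilon})$, the factor $2$ accounting for the ordered pairs $(x,y)$ and $(y,x)$. Conversely, any face meeting $A$ satisfies $\varepsilon x,\varepsilon y\in(A)_{R\varepsilon}$, and using $\mathcal H^{n-1}(\mathrm{face})\le\beta\varepsilon^{n-1}$ together with \eqref{ass:boundcnn} gives $\mathrm{Per}(V_\varepsilon,A)\le\beta C F_\varepsilon(X,(A)_{R\varepsilon})$. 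The harder point is the uniform lower bound $\alpha>0$ on face areas, which is exactly where periodicity is indispensable; without it one only has positivity face by face from the definition of $\mathcal{NN}$, not a uniform bound.
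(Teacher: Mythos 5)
Your proof is correct and follows essentially the same route as the paper's: (i)--(iii) are the standard triangle-inequality and volume-packing arguments, and (iv) compares the energy with the areas of the Voronoi faces making up $\partial V_\varepsilon$, localising via $\mathcal{V}(x)\subset B_R(x)$ and summing with the bound on $\#\mathcal{NN}(x)$ from (iii). You are in fact somewhat more explicit than the paper about where periodicity enters: the paper simply asserts the two-sided bound $C^{-1}\varepsilon^{n-1}\leq\mathcal H^{n-1}(\mathcal V_\varepsilon(x)\cap\partial V_\varepsilon)\leq C\varepsilon^{n-1}$, whereas you correctly isolate the point that the uniform lower bound $\alpha>0$ on face areas comes from periodicity (faces fall into finitely many congruence classes, since $\mathcal L\cap Q$ is finite and translation by basis vectors preserves the Voronoi tessellation), the upper bound being elementary.
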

\begin{proof} Apart from (iv) all of these facts are classical. We collect their proof here for completeness.\\
\noindent\textit{proof of (i), (ii):}   Let $x \in \mathcal{L}$. The inclusion $B_{r/2}(x)\subset \mathcal{V}(x)$ follows from (L2) since for all $y \in B_{r/2}(x)$ and $z \in \mathcal{L}\setminus \{x\}$
\begin{align*}
|z-y| \geq |x-z|-|x-y| \geq r- r/2=r/2\geq |x-y|\,.
\end{align*}
As for the inclusion $\mathcal{V}(x) \subset B_R(x)$  assume that there exists $y \in\mathcal{V}(x) \setminus B_R(x) $. We have for all $z\in \mathcal{L}\setminus \{x\}$ 
\begin{align*}
|y-z|\geq |y-x|\geq R\,.
\end{align*}
This implies that $B_R(y) \cap \mathcal{L}=\emptyset$ contradicting (L1).  Finally  $\mathcal{NN}(x) \subset B_{2R}(x)$ since for $y \in \mathcal{NN}(x)$ we have that $\mathcal{V}(x) \cap \mathcal{V}(y) \neq \emptyset$ which implies $B_{R}(x) \cap B_R(y)\neq \emptyset$.\\

\noindent\textit{proof of (iii):}  Due to (i) and (ii) we have that $B_{r/2}(y) \cap B_{r/2}(z) =\emptyset$ $y,z \in \mathcal{NN}(x)$, $y\neq z$ and $B_{r/2}(y) \subset B_{2R+r}(x)$ for all $y \in \mathcal{NN}(x)$. Therefore
\begin{align*}
\omega_n \left(\frac{r}{2}\right)^n\#\mathcal{NN}(x) = \bigcup_{y \in \mathcal{NN}(x)} |B_{r/2}(y)| \leq |B_{2R+r}(x)| \leq \omega_n (2R+r)^n
\end{align*}
and thus the claim follows with $C= (2+ 4R/r)^n$.\\
\noindent \textit{proof of (iv):} The desired inequalities follow from the following observation: Given $x \in X$, we have that
\begin{align}\label{equiv:voronoinn}
\mathcal{H}^{n-1}(\mathcal{V}_\varepsilon(x) \cap \partial V_\varepsilon) >0 \iff \mathcal{NN}_\varepsilon(x)\setminus X \neq \emptyset\,.
\end{align}
Additionally, we note  that, for $x\in X$ such that  $\mathcal{NN}_\varepsilon(x)\setminus X \neq \emptyset$, there exists $C>0$ such that
\begin{align}\label{ineq:voronoiinterface}
C^{-1}\varepsilon^{n-1}\leq \mathcal{H}^{n-1}(\mathcal{V}_\varepsilon(x) \cap \partial V_\varepsilon) \leq C\varepsilon^{n-1}\,.
\end{align}
Now, summing over all $x \in X \cap A$ and noting that each Voronoi cell intersects only a finite number of other Voronoi cells, using \eqref{equiv:voronoinn},\eqref{ineq:voronoiinterface},(i), and (iii), we obtain
\begin{align*}
F_{\varepsilon}(X,A)=\sum_{x \in X \cap A} \varepsilon^{n-1} \#(\mathcal{NN}_\varepsilon(x) \setminus X) &\leq C\sum_{x \in X \cap A} \mathcal{H}^{n-1}(\mathcal{V}_\varepsilon(x) \cap \partial V_\varepsilon)\\&\leq C \mathcal{H}^{n-1}(\partial V_\varepsilon \cap (A)_{R\varepsilon})=C\,\mathrm{Per}(V_\varepsilon,(A)_{R\varepsilon})\,.
\end{align*}
This yields the first inequality in (iv). On the other hand, owing to (i) we have $\partial V_\varepsilon \cap A \subset \bigcup_{x \in X \cap (A)_{R\varepsilon}} (\mathcal{V}_\varepsilon(x) \cap\partial V_\varepsilon)$, and thus, by \eqref{equiv:voronoinn} and \eqref{ineq:voronoiinterface},
\begin{align*}
\mathrm{Per}(V_\varepsilon,A)=\mathcal{H}^{n-1}(\partial V_\varepsilon \cap A) &\leq \sum_{x \in X \cap (A)_{R\varepsilon}} \mathcal{H}^{n-1}(\mathcal{V}_\varepsilon(x) \cap \partial V_\varepsilon) \\&\leq C \varepsilon^{n-1} \sum_{x \in X \cap (A)_{R\varepsilon}} \#(\mathcal{NN}_\varepsilon(x) \setminus X)=  C F_{\varepsilon}(X,(A)_{R\varepsilon})\,.
\end{align*} 
This shows the second inequality in (iv) and concludes the proof.
\end{proof}

\begin{proposition}(Compactness of the piecewise-constant interpolants)\label{prop:Compactnessgeneral} Let $\mathcal{L}$  be an admissible periodic set of points and $F_\varepsilon$ defined in \eqref{def:Feps} with $\mathcal{L}$ in place of $\Sigma$.  Let $A \subset\mathbb{R}^n$ be open and let  $\{X_\varepsilon\}_\varepsilon \subset \varepsilon\mathcal{L}$ be such that 
\begin{align*}
\sup_{\varepsilon >0} F_{\varepsilon}(X_\varepsilon,A) <+\infty\,.
\end{align*}
Then there exists a set of finite perimeter $V\subset A$ and a subsequence (not relabeled) such that $\chi_{V_\varepsilon} \to  \chi_V$ with respect to the strong $L^1_{\mathrm{loc}}(A)$-topology.
\end{proposition}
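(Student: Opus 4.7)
The plan is to reduce the statement to the classical $BV$-compactness theorem via the comparison between the discrete energy $F_\varepsilon$ and the perimeter of the piecewise-constant interpolant $V_\varepsilon$ proved in Lemma \ref{lemma:propertiesadmissiblesets}(iv). The key input is the second inequality
\[
\mathrm{Per}(V_\varepsilon,A') \leq C\, F_\varepsilon(X_\varepsilon,(A')_{R\varepsilon}),
\]
which converts the discrete-energy bound into a uniform perimeter bound on any relatively compact open subset $A' \subset\subset A$: for $\varepsilon$ small enough $(A')_{R\varepsilon} \subset A$, hence
\[
\sup_{\varepsilon>0}\mathrm{Per}(V_\varepsilon,A') \leq C \sup_{\varepsilon>0} F_\varepsilon(X_\varepsilon,A) < +\infty.
\]
Since $\chi_{V_\varepsilon}$ takes values in $\{0,1\}$ and $|A'|<+\infty$, the sequence $\{\chi_{V_\varepsilon}\}_\varepsilon$ is bounded in $BV(A')$.

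The classical $BV$-compactness theorem (cf.\ \cite{AFP}) then yields, along a subsequence (not relabeled), a function $u \in BV(A')$ such that $\chi_{V_\varepsilon}\to u$ strongly in $L^1(A')$ and a.e.\ in $A'$. Because the almost-everywhere limit of functions taking only the values $0,1$ is itself $\{0,1\}$-valued a.e., there exists a measurable set $V_{A'} \subset A'$ of finite perimeter with $u=\chi_{V_{A'}}$. In particular, $\mathrm{Per}(V_{A'},A')\leq \liminf_\varepsilon \mathrm{Per}(V_\varepsilon,A')<+\infty$ by lower semicontinuity.

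To upgrade this to $L^1_{\mathrm{loc}}(A)$-convergence and identify a single limit set $V$, I would fix an exhaustion of $A$ by relatively compact open sets $A_1 \subset\subset A_2 \subset\subset \dots \subset\subset A$ with $\bigcup_k A_k = A$, and run a standard diagonal extraction: apply the argument above on $A_1$, then on $A_2$ along the already-extracted subsequence, and so on. The diagonal subsequence converges in $L^1(A_k)$ for every $k$, and the limits on different $A_k$'s agree by uniqueness of $L^1$-limits, defining a single set $V \subset A$ with $\chi_V \in BV_{\mathrm{loc}}(A)$ and $\chi_{V_\varepsilon}\to \chi_V$ in $L^1_{\mathrm{loc}}(A)$. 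There is no real obstacle here: the only points one has to verify carefully are that the enlargement $(A')_{R\varepsilon}$ stays inside $A$ for small $\varepsilon$ (granted by $A'\subset\subset A$ and the finiteness of $R$ from (L1)) and that the finite-perimeter property passes to the $L^1_{\mathrm{loc}}$-limit on each $A_k$, both of which are standard.
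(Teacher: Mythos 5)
Your proposal is correct and follows essentially the same route as the paper: use Lemma~\ref{lemma:propertiesadmissiblesets}(iv) to turn the uniform bound on $F_\varepsilon(X_\varepsilon,A)$ into a uniform perimeter bound on each $A'\subset\subset A$, invoke $BV$-compactness (\cite[Theorem~3.39]{AFP}) to extract an $L^1(A')$-convergent subsequence, and then diagonalize over an exhaustion of $A$. The only cosmetic difference is that you spell out that the a.e.\ limit of $\{0,1\}$-valued functions is again a characteristic function, a step the paper leaves implicit.
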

\begin{proof} Let $X_\varepsilon$ be as above and let $A'\subset\subset A$ such that $(A')_{R\varepsilon} \subset A$, and $\partial A'\in \mathrm{Lip}$. We observe, due to the second inequality of Lemma \ref{lemma:propertiesadmissiblesets}(iv),
\begin{align*}
 \mathrm{Per}(V_\varepsilon, A')\leq CF_\varepsilon(X_\varepsilon,  ( A')_{R \varepsilon}) \leq CF_\varepsilon(X_\varepsilon,A)\leq C\,.
\end{align*}
Therefore 
\begin{align*}
\|\chi_{V_\varepsilon}\|_{L^1(A')} +|D\chi_{V_\varepsilon}|(A') \leq C(|A'| + \mathrm{Per}(V_\varepsilon, A'))\leq  C\,.
\end{align*}
We use \cite[Theorem 3.39]{AFP} to deduce that there exists a subsequence (depending on $A'$) and a set of finite perimeter $V$ such that $\chi_{V_\varepsilon} \to \chi_{V}$ in $L^1(A')$. By a diagonal argument on a sequence $A'_k \uparrow A$ as $k \to +\infty$, we obtain the claim.
\end{proof}

\begin{lemma}(Equivalence of convergences)\label{lemma:equivalenceoftopgeneral} Let $\mathcal{L}$ be an admissible periodic set of points and $F_\varepsilon$ defined in \eqref{def:Feps} with $\mathcal{L}$ in place of $\Sigma$. Let $A \subset\mathbb{R}^n$ be open and let $V \subset A$ be a set of finite perimeter and let $\{X_\varepsilon\}_\varepsilon \subset \varepsilon\mathcal{L}$ for each $\varepsilon>0$ be such that
\begin{align}\label{ineq:uniform energy}
\sup_{\varepsilon >0} F_\varepsilon(X_\varepsilon,A) <+\infty\,.
\end{align}
 Then, setting $\mu_\varepsilon$ and $V_\varepsilon$ as in \eqref{def:empiricalmeasures} and \eqref{def:piecewiseconstantinterpolant}, the following are equivalent:
\begin{itemize}
\item[(i)] $\mu_\varepsilon \overset{*}{\rightharpoonup} \mu$ with respect to the weak star topology of measures and $\mu = \rho\mathcal{L}^n\llcorner V$.
\item[(ii)] $\chi_{V_\varepsilon} \to \chi_V$ with respect to the strong $L^1_{\mathrm{loc}}(A)$-topology.
\end{itemize}
\end{lemma}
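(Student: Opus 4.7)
The strategy is to show that for every test function $\varphi\in C_c(A)$ one has
\begin{align*}
\int_A \varphi\, d\mu_\varepsilon - \rho\int_A \varphi\,\chi_{V_\varepsilon}\,dx = o(1) \qquad \text{as } \varepsilon\to 0.
\end{align*}
Once this is established, $\mu_\varepsilon\overset{*}{\rightharpoonup}\rho\mathcal{L}^n\llcorner V$ becomes equivalent to the weak-$*$ convergence of $\rho\chi_{V_\varepsilon}\mathcal{L}^n$, and combining this with the $L^1_{\mathrm{loc}}(A)$-compactness of Proposition~\ref{prop:Compactnessgeneral} delivers both directions.

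The key estimate is obtained by decomposing, for each $x\in X_\varepsilon$,
\begin{align*}
\varepsilon^n\varphi(x) - \rho\int_{\mathcal{V}_\varepsilon(x)}\varphi(y)\,dy = \varepsilon^n\psi(\varepsilon^{-1}x)\,\varphi(x) + \rho\int_{\mathcal{V}_\varepsilon(x)}\bigl(\varphi(x)-\varphi(y)\bigr)\,dy,
\end{align*}
where $\psi\colon\mathcal{L}\to\mathbb{R}$ is defined by $\psi(z):=1-\rho\,|\mathcal{V}(z)|$. The second contribution, once summed over $x\in X_\varepsilon$, is controlled by the modulus of continuity of $\varphi$ at scale $R\varepsilon$ thanks to Lemma~\ref{lemma:propertiesadmissiblesets}(i) and by the uniform local bound on the number of atoms per unit volume, hence vanishes as $\varepsilon\to 0$.

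The first contribution, $\varepsilon^n\sum_{x\in X_\varepsilon}\psi(\varepsilon^{-1}x)\varphi(x)$, exploits the structure of $\psi$. Periodicity of the Voronoi decomposition gives $\psi(z+e_k)=\psi(z)$ for each generator $e_k$, and a standard tiling argument yields $\sum_{z\in \mathcal{L}\cap Q}|\mathcal{V}(z)|=|Q|$, whence $\sum_{z\in \mathcal{L}\cap Q}\psi(z)=\#(\mathcal{L}\cap Q)-\rho|Q|=0$ by definition of $\rho$. Partitioning $\mathbb{R}^n$ into the periodicity cells $\varepsilon(Q+i)$, $i\in\mathbb{Z}^n$, and writing $Z_i:=\varepsilon^{-1}X_\varepsilon\cap(Q+i)$, replacing $\varphi(\varepsilon z)$ by $\varphi(\varepsilon i)$ up to an oscillation cost of order $\omega_\varphi(C\varepsilon)$ per atom, the remaining contribution reduces to $\varepsilon^n\sum_i \varphi(\varepsilon i)\sum_{z\in Z_i}\psi(z)$. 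The inner sum vanishes whenever $Z_i=\mathcal{L}\cap(Q+i)$ (a fully occupied cell) or $Z_i=\emptyset$ (a fully empty cell); only the \emph{mixed} cells contribute, and their number in any compact region is $O(\varepsilon^{-(n-1)})$ by Lemma~\ref{lemma:propertiesadmissiblesets}(iv) together with the uniform energy bound \eqref{ineq:uniform energy}. Since each mixed cell produces an error of order $\varepsilon^n$, the total surviving term is $O(\varepsilon)$ and vanishes as well.

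The implication (ii)$\Rightarrow$(i) is then immediate from the estimate. For (i)$\Rightarrow$(ii), I invoke Proposition~\ref{prop:Compactnessgeneral} to extract, from any subsequence of $\{\chi_{V_\varepsilon}\}$, a further subsequence converging in $L^1_{\mathrm{loc}}(A)$ to some $\chi_{V'}$; the already proven implication gives $\mu_\varepsilon\overset{*}{\rightharpoonup}\rho\mathcal{L}^n\llcorner V'$, and by uniqueness of the weak-$*$ limit $V'=V$ up to a Lebesgue null set, so the Urysohn principle yields convergence of the full sequence. The main technical point I expect to dwell on is the control of the number of mixed cells: each such cell must contribute to $\mathrm{Per}(V_\varepsilon,\cdot)$ through at least one shared face of the Voronoi tiling of area $\sim\varepsilon^{n-1}$, which in turn rests on the connectedness of the nearest-neighbor graph on $\mathcal{L}$—a property that follows from condition~(L1) and the face-adjacency structure of the Voronoi diagram of a periodic admissible lattice.
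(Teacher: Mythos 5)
Your plan is correct and shares the structural skeleton of the paper's own proof: classify cells as full/empty/mixed, bound the number of mixed cells by the energy via Lemma~\ref{lemma:propertiesadmissiblesets}(iv), control the remainder by the modulus of continuity of the test function, and close the implication (i)$\Rightarrow$(ii) by combining Proposition~\ref{prop:Compactnessgeneral} with the Urysohn property of $L^1_{\mathrm{loc}}$-convergence. Where you diverge is in the bookkeeping: the paper introduces an intermediate Dirac measure $\nu_\varepsilon$ supported on the grid $\varepsilon\mathbb{Z}^n$ (with the full/empty/bad classification performed on the enlarged cubes $Q_{(3+R)\varepsilon}(k)$), and proves the two separate equivalences $\mu_\varepsilon\overset{*}{\rightharpoonup}\mu \Leftrightarrow \nu_\varepsilon\overset{*}{\rightharpoonup}\mu$ and $\nu_\varepsilon\overset{*}{\rightharpoonup}\mu \Leftrightarrow \rho\chi_{V_\varepsilon}\overset{*}{\rightharpoonup}\mu$, whereas you collapse these into one direct comparison $\int\varphi\,d\mu_\varepsilon-\rho\int\varphi\,\chi_{V_\varepsilon}\,dx=o(1)$ via the function $\psi(z)=1-\rho\,|\mathcal{V}(z)|$, the periodicity cells $\varepsilon(Q+i)$, and the balance identity $\sum_{z\in\mathcal{L}\cap Q}\psi(z)=0$. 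This is somewhat tidier, since the cancellation on full cells is made manifest by a single identity rather than by aligning two intermediate approximations. Two caveats to tighten when writing this out: first, the Voronoi face witnessing a given mixed cell $\varepsilon(Q+i)$ may lie slightly outside that cell (only within $(\varepsilon(Q+i))_{R\varepsilon}$), so the $O(\varepsilon^{1-n})$ count requires summing over the enlarged cells and using their bounded overlap, exactly as in the paper's treatment of $\mathcal{I}_\varepsilon^{\mathrm{bad}}$; second, the appeal to connectedness of the nearest-neighbor graph is more than you need, because $\varepsilon(Q+i)$ is convex and the segment joining an occupied to an unoccupied atom stays inside it, hence crosses a Voronoi face of area $\geq c\varepsilon^{n-1}$, giving the perimeter contribution directly.
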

\begin{proof} We proceed in two steps. First, we construct a sequence of auxiliary measure $\nu_\varepsilon$ and show that its weak*-convergence is equivalent to the weak*-convergence of the sequence of measures $\mu_\varepsilon$. Then, for this sequence of measures we show that its weak*-convergence is equivalent to~(ii).\\
\begin{step}{1}(Construction of the auxiliary measure)
Let $\{X_\varepsilon\}_\varepsilon$ be as in the assumptions of the lemma and let $v \in C_c(A)$ such that $\mathrm{supp}\, v \subset\subset A$. We assume that $\varepsilon>0$ is small enough such that for all $k \in \varepsilon\mathbb{Z}^n$ there holds
\begin{align}\label{cond:cubes}
Q_{(3+R)\varepsilon}(k) \cap \mathrm{supp}\, v \neq \emptyset \implies Q_{(3+R)\varepsilon}(k) \subset\subset A\,.
\end{align}
 Fix $R_0>0$ such that $\mathrm{supp}\, v \subset B_{R_0}$. Since $v$ is uniformly continuous, it admits a modulus of continuity $\omega=\omega_v \colon [0,+\infty) \to [0,+\infty)$, i.e., an increasing function such that $\omega(0)=0$ and
\begin{align}\label{ineq:uniformcontinuity}
|v(x)-v(y)|\leq \omega(|x-y|) \text{ for all } x,y \in \mathbb{R}^n\,.
\end{align}
 We set 
\begin{align*}
\begin{split}
&\mathcal{I}_\varepsilon^\mathrm{full}:=\{k \in \varepsilon\mathbb{Z}^d \colon Q_{(3+R)\varepsilon}(k)\cap X = Q_{(3+R)\varepsilon}(k)\cap \varepsilon\mathcal{L}\}\,,\\& \mathcal{I}_\varepsilon^\mathrm{empty}:=\{k \in \varepsilon\mathbb{Z}^d \colon Q_{(3+R)\varepsilon}(k)\cap X = \emptyset\}\,,
\end{split}
\end{align*}
and $\mathcal{I}_\varepsilon^\mathrm{bad} :=\{ k \in \varepsilon\mathbb{Z}^n \colon Q_{(3+R)\varepsilon}(k) \cap \mathrm{supp}\, v \neq \emptyset\}  \setminus (\mathcal{I}_\varepsilon^\mathrm{full} \cup  \mathcal{I}_\varepsilon^\mathrm{empty})$. We now set
\begin{align*}
\nu_\varepsilon : = \sum_{k \in \mathcal{I}_\varepsilon^\mathrm{full}} \varepsilon^n \#(\mathcal{L}\cap Q)\delta_k\,.
\end{align*}
Our goal is to show that
\begin{align}\label{equiv:convergencenumu}
\mu_\varepsilon \overset{*}{\rightharpoonup} \mu \quad \text{in the sense of measures} \quad \iff \quad \nu_\varepsilon \overset{*}{\rightharpoonup} \mu \quad \text{in the sense of measures.}
\end{align}
First of all we claim that
\begin{align}\label{ineq:boundbadpoints}
\#\mathcal{I}_\varepsilon^{\mathrm{bad}} \leq C \varepsilon^{1-n}\,.
\end{align}
In fact, due to its definition and \eqref{cond:cubes}, we have that for all $k \in  \mathcal{I}_\varepsilon^\mathrm{bad}$ there exists $x \in X \cap Q_{(3+R)\varepsilon}(k) \subset A$ such that $\#(\mathcal{NN}(x) \setminus X) \neq \emptyset$. Therefore, since for $k \in \varepsilon\mathbb{Z}^n$ fixed  $Q_{(3+R)\varepsilon}(k)\cap Q_{(3+R)\varepsilon}(j)\neq \emptyset$ for only finitely many $j$ independent of $\varepsilon$, we have
\begin{align*}
\varepsilon^{n-1}\#\mathcal{I}_\varepsilon^\mathrm{bad} \leq \sum_{k \in \mathcal{I}_\varepsilon^\mathrm{bad}} \sum_{x \in Q_{(3+R)\varepsilon}(k)\cap X}\varepsilon^{n-1} \#(\mathcal{NN}(x) \setminus X)\leq CF_\varepsilon(X_\varepsilon,A)\,.
\end{align*}
Using  \eqref{ineq:uniform energy} yields \eqref{ineq:boundbadpoints}.
Let now $k \in \mathcal{I}_\varepsilon^\mathrm{full}$. Then
\begin{align*}
\int_{Q_\varepsilon(k)} v(x) \,\mathrm{d}\nu_\varepsilon(x) = v(k) \varepsilon^n \#(\mathcal{L}\cap Q) &=\sum_{x \in Q_{\varepsilon}(k) \cap \varepsilon\mathcal{L}} \varepsilon^n v(k) \\&= \sum_{x \in Q_{\varepsilon}(k) \cap X} \varepsilon^n v(x) +\sum_{x \in Q_{\varepsilon}(k) \cap \varepsilon\mathcal{L}} \varepsilon^n (v(k)- v(x))\\&=\int_{Q_\varepsilon(k)}v(x)\,\mathrm{d}\mu_\varepsilon(x) +\sum_{x \in Q_{\varepsilon}(k) \cap \varepsilon\mathcal{L}} \varepsilon^n (v(k)- v(x))\,.
\end{align*}
Thus,
\begin{align*}
\left|\int_{Q_\varepsilon(k)} v(x) \,\mathrm{d}(\nu_\varepsilon -\mu_\varepsilon)(x)\right|\leq \sum_{x \in Q_{\varepsilon}(k) \cap \varepsilon\mathcal{L}} \varepsilon^n |v(k)- v(x)| \leq \omega(\varepsilon \sqrt{n}) |Q_\varepsilon(k)|
\end{align*}
and, recalling that $\mathrm{supp}\, v \subset B_{R_0}$ and \eqref{ineq:uniformcontinuity}, we have for $\varepsilon>0$ small enough
\begin{align}\label{ineq:differencemeasuresfull}
\sum_{k \in \mathcal{I}_\varepsilon^\mathrm{full}}\left|\int_{Q_\varepsilon(k)} v(x) \,\mathrm{d}(\nu_\varepsilon -\mu_\varepsilon)(x)\right|\leq  \omega(\varepsilon\sqrt{n}) |B_{2R_0}|\,.
\end{align}
 Noting that  both  $|\mu_\varepsilon|(Q_\varepsilon(k))$ and $|\nu_\varepsilon|(Q_\varepsilon(k))$ are bounded above by  
 $\varepsilon^n\#(\mathcal{L}\cap Q)\leq C\varepsilon^n$ for all $k \in \varepsilon\mathbb{Z}^n$, using \eqref{ineq:boundbadpoints}, we observe
\begin{align}\label{ineq:differencemeasuresbad}
\sum_{k \in \mathcal{I}_\varepsilon^\mathrm{bad}}\left|\int_{Q_\varepsilon(k)} v(x) \,\mathrm{d}(\nu_\varepsilon -\mu_\varepsilon)(x)\right|\leq 2\|v\|_\infty\#\mathcal{I}_\varepsilon^\mathrm{bad} \varepsilon^n\#(\mathcal{L}\cap Q)\leq C\varepsilon\|v\|_\infty\,.
\end{align}
 Therefore, noting that $\mu_\varepsilon\lfloor_{Q_\varepsilon(k)}=\nu_\varepsilon\lfloor_{Q_\varepsilon(k)}$ for $k \in \mathcal{I}_\varepsilon^{\mathrm{empty}}$, using \eqref{ineq:differencemeasuresfull} and \eqref{ineq:differencemeasuresbad}, we obtain
\begin{align*}
\begin{split}
\left|\int_{\mathbb{R}^n}v(x)\,\mathrm{d}(\nu_\varepsilon-\mu_\varepsilon)(x)\right|&\leq \omega(\varepsilon\sqrt{n})|B_{2R}| + C\varepsilon\|v\|_\infty\,.
\end{split}
\end{align*}
This shows \eqref{equiv:convergencenumu}.
\end{step}\\
\begin{step}{2}(Equivalence of convergence) We now prove that 
\begin{align*}
\chi_{V_\varepsilon}\to\chi_V \quad \text{in }L^1_{\mathrm{loc}}(A) \quad \iff \quad \nu_\varepsilon \overset{*}{\rightharpoonup} \mu \quad \text{in the sense of measures.}
\end{align*}
First of all, recalling $\rho$ defined in \eqref{def:rho}, we note
\begin{align}\label{eq:nuepscubes}
\begin{split}
\int_{\mathbb{R}^n} v\,\mathrm{d}\nu_\varepsilon &=\sum_{k\in \mathcal{I}_\varepsilon^\mathrm{full}} \varepsilon^n\#(\mathcal{L}\cap Q) v(k) = \frac{\#(\mathcal{L}\cap Q)}{|Q|} \sum_{k \in \mathcal{I}_\varepsilon^\mathrm{full}} |Q_\varepsilon(k)|  v(k) \\&= \rho\sum_{k\in \mathcal{I}_\varepsilon^\mathrm{full}}  \int_{Q_\varepsilon(k)} v(y)\,\mathrm{d}y + \rho\sum_{k\in \mathcal{I}_\varepsilon^\mathrm{full}}\int_{Q_\varepsilon(k)}(v(k)-v(y))\,\mathrm{d}y\,.
\end{split}
\end{align}
Now, due to \eqref{ineq:uniformcontinuity}, we have for $\varepsilon>0$ small enough
\begin{align}
\left|\sum_{k\in \mathcal{I}_\varepsilon^\mathrm{full}}\int_{Q_\varepsilon(k)}(v(k)-v(y))\,\mathrm{d}y\right|\leq\sum_{k\in \mathcal{I}_\varepsilon^\mathrm{full}}\int_{Q_\varepsilon(k)}|v(k)-v(y)|\,\mathrm{d}y\leq C\omega(\varepsilon\sqrt{n}) |B_{2R_0}|\,.
\end{align}
Note that, by Lemma \ref{lemma:propertiesadmissiblesets}(i), we have
\begin{align}
\rho\sum_{k\in \mathcal{I}_\varepsilon^\mathrm{full}}  \int_{Q_\varepsilon(k)} v(y)\,\mathrm{d}y  = \rho\sum_{k\in \mathcal{I}_\varepsilon^\mathrm{full}}  \int_{Q_\varepsilon(k)\cap V_\varepsilon} v(y)\,\mathrm{d}y 
\end{align}
and also $V_\varepsilon \cap Q_\varepsilon(k) =\emptyset$ for $k \in \mathcal{I}_\varepsilon^\mathrm{empty}$. Note that by \eqref{ineq:boundbadpoints} we have
\begin{align}\label{ineq:Iepsbad}
\sum_{k\in \mathcal{I}_\varepsilon^\mathrm{bad}}\int_{Q_\varepsilon(k)}\left|v(y)\right|\,\mathrm{d}y \leq  \varepsilon^n \#\mathcal{I}_\varepsilon^\mathrm{bad}\|v\|_\infty \leq C\varepsilon\|v\|_\infty\,.
\end{align}
Due to \eqref{eq:nuepscubes}--\eqref{ineq:Iepsbad}, we obtain that
\begin{align*}
\rho\chi_{V_\varepsilon}\overset{*}{\rightharpoonup} \mu \quad  \text{in the sense of measures}\quad \iff \quad \nu_\varepsilon \overset{*}{\rightharpoonup} \mu \quad \text{in the sense of measures.}
\end{align*}
Now clearly (ii) implies (i), since the $L^1_{\mathrm{loc}}(A)$ convergence of the characteristic functions implies their weak* convergence as measures. As for the implication (i) to (ii) we proceed as follows. Let $V\subset A$ be a set of finite perimeter and assume that $\mu_\varepsilon \overset{*}{\rightharpoonup} \mu$ and $\mu = \rho\mathcal{L}^n\llcorner V$. By Step~1 this is equivalent to $\nu_\varepsilon \overset{*}{\rightharpoonup} \mu$,  hence to $\rho\chi_{V_\varepsilon} \overset{*}{\rightharpoonup}\rho\chi_V$ in the sense of measures (which is to say that $\chi_{V_\varepsilon} \overset{*}{\rightharpoonup}\chi_V$)  Take now an arbitrary subsequence (not relabeled) of $\{X_\varepsilon\}_\varepsilon$. We show that there exists a further subsequence (again not relabeled) such that $\chi_{V_\varepsilon} \to \chi_V$ with respect to the strong $L^1_{\mathrm{loc}}(A)$-topology. Since $L^1_{\mathrm{loc}}(A)$-topology satisfies the Urysohn property this implies the claim. By the compactness statement in \GGG Proposition \ref{prop:Compactnessgeneral} \EEE we have that there exists a set of finite perimeter $V' \subset \mathbb{R}^n$ and a further subsequence $\{X_{\varepsilon_k}\}_k \subset  \{X_\varepsilon\}_\varepsilon$ such that $\chi_{V_\varepsilon} \to \chi_{V'}$ with respect to the strong $L^1_{\mathrm{loc}}(A)$-topology. Since this implies their weak* convergence as measures and we already know that the whole sequence converges to $\chi_V$ we deduce  $V=V'$ which implies the claim and concludes the proof of the lemma.
\end{step}
\end{proof}

\GGG
\begin{corollary}(Equivalence of $\Gamma$-convergence)\label{cor:equivalenceGamma} Let $\mathcal{L}$ be an admissible periodic set of points and let $F_\varepsilon$ be defined in \eqref{def:Feps} with $\mathcal{L}$ in place of $\Sigma$. Let $A \subset \mathbb{R}^n$ be a bounded open set such that $\partial A \in \mathrm{Lip}$. Then the following statements are equivalent:
\begin{itemize}
\item[(i)] $F_\varepsilon \colon \mathcal{M}_+(A) \to [0,+\infty]$ $\Gamma$-converges with respect to the weak* convergence of measures to the functional $F_{\mathrm{hom}} \colon \mathcal{M}_+(A) \to [0,+\infty]$ defined as
\begin{align*}
F_\mathrm{hom}(\mu,A):=\begin{cases} \int_{\partial^* V\cap A} \varphi_{\mathrm{hom}}(\nu)\,\mathrm{d}\mathcal{H}^{n-1} &\text{if } \mu = \rho \mathcal{L}^n\llcorner V\,;\\
+\infty &\text{otherwise.}
\end{cases}
\end{align*}
\item[(ii)] $F_\varepsilon \colon L^1_{\mathrm{loc}}(A) \to [0,+\infty]$ $\Gamma$-converges with respect to strong $L^1_{\mathrm{loc}}(A)$-topology to the functional $F_{\mathrm{hom}} \colon L^1_{\mathrm{loc}}(A) \to [0,+\infty]$ defined as
\begin{align*}
F_\mathrm{hom}(u,A):=\begin{cases} \int_{\partial^* V\cap A} \varphi_{\mathrm{hom}}(\nu)\,\mathrm{d}\mathcal{H}^{n-1} &\text{if } u=\chi_V \text{ and } \chi_V \in BV(A)\,;\\
+\infty &\text{otherwise.}
\end{cases}
\end{align*}
\end{itemize}
\end{corollary}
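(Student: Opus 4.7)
The key observation is that the two functionals in (i) and (ii) are two different extensions of the \emph{same} underlying discrete energy: given a configuration $X\subset\varepsilon\mathcal{L}$, the value $F_\varepsilon(X,A)$ is assigned both to the empirical measure $\mu_\varepsilon$ of \eqref{def:empiricalmeasures} and to the indicator $\chi_{V_\varepsilon}$ of the piecewise constant interpolant \eqref{def:piecewiseconstantinterpolant}. Moreover the two limit functionals coincide under the natural identification $\mu=\rho\mathcal{L}^n\llcorner V\leftrightarrow u=\chi_V$. Thus the equivalence of (i) and (ii) reduces entirely to showing that the two topological settings agree on sequences of bounded energy, which is precisely what Lemma~\ref{lemma:equivalenceoftopgeneral} provides (compactness of the interpolants being ensured by Proposition~\ref{prop:Compactnessgeneral} and Lemma~\ref{lemma:propertiesadmissiblesets}(iv)).

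For the implication (i)$\Rightarrow$(ii), I would first address the liminf inequality. Let $u_\varepsilon\to u$ in $L^1_{\mathrm{loc}}(A)$; up to extracting a subsequence we may assume $\liminf_\varepsilon F_\varepsilon(u_\varepsilon,A)=\ell<+\infty$, so $u_\varepsilon=\chi_{V_\varepsilon}$ for some $X_\varepsilon\subset\varepsilon\mathcal{L}$. The bound $\sup_\varepsilon F_\varepsilon(X_\varepsilon,A)<+\infty$ together with Proposition~\ref{prop:Compactnessgeneral} forces $u=\chi_V$ with $V$ of finite perimeter in $A$, and Lemma~\ref{lemma:equivalenceoftopgeneral} then yields $\mu_\varepsilon\overset{*}{\rightharpoonup}\mu=\rho\mathcal{L}^n\llcorner V$. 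Invoking (i) on this sequence gives
\begin{equation*}
F_{\mathrm{hom}}(u,A)=\int_{\partial^*V\cap A}\varphi_{\mathrm{hom}}(\nu)\,\mathrm{d}\mathcal{H}^{n-1}=F_{\mathrm{hom}}(\mu,A)\leq\ell\,.
\end{equation*}
For the limsup, given $u=\chi_V$ with $V\subset A$ of finite perimeter, I would apply (i) to $\mu=\rho\mathcal{L}^n\llcorner V$, obtaining a recovery sequence $\mu_\varepsilon\overset{*}{\rightharpoonup}\mu$ with $F_\varepsilon(\mu_\varepsilon,A)\to F_{\mathrm{hom}}(\mu,A)$. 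Finiteness of the energy forces $\mu_\varepsilon$ to be the empirical measure of some $X_\varepsilon\subset\varepsilon\mathcal{L}$; setting $u_\varepsilon:=\chi_{V_\varepsilon}$ and using Lemma~\ref{lemma:equivalenceoftopgeneral} once more, we obtain $u_\varepsilon\to u$ in $L^1_{\mathrm{loc}}(A)$, with $F_\varepsilon(u_\varepsilon,A)=F_\varepsilon(\mu_\varepsilon,A)\to F_{\mathrm{hom}}(u,A)$.

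The converse implication (ii)$\Rightarrow$(i) is completely symmetric: the liminf is handled by passing from a weak* convergent sequence of empirical measures to the corresponding interpolants $\chi_{V_\varepsilon}$ via Proposition~\ref{prop:Compactnessgeneral} and Lemma~\ref{lemma:equivalenceoftopgeneral}, and then invoking (ii); for the limsup one starts from a recovery sequence $u_\varepsilon=\chi_{V_\varepsilon}$ provided by (ii) and reinterprets it as the empirical measure of the underlying $X_\varepsilon$.

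No substantial obstacle is expected; the only points worth checking are (a) that one may restrict to finite-energy sequences (which is automatic since the $\Gamma$-inequalities are trivial otherwise), and (b) the identification of the limit set $V$ in both formulations, which is precisely what Lemma~\ref{lemma:equivalenceoftopgeneral} delivers by relating the weak* limit of the empirical measures and the $L^1_{\mathrm{loc}}$ limit of the interpolants through the uniform density factor $\rho$.
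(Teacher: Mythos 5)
Your argument is exactly what the paper has in mind: the paper's proof of this corollary is the single sentence ``The statement follows directly from Lemma~\ref{lemma:equivalenceoftopgeneral}'', and your write-up simply spells out what that means, verifying each $\Gamma$-liminf and $\Gamma$-limsup inequality in both directions by transferring sequences between the two topological settings via Lemma~\ref{lemma:equivalenceoftopgeneral}, with Proposition~\ref{prop:Compactnessgeneral} supplying the finite-perimeter limit set needed to invoke that lemma. The proposal is correct and follows essentially the same route as the paper.
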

\begin{proof} The statement follows directly from Lemma \ref{lemma:equivalenceoftopgeneral}.
\end{proof}
\begin{remark}\label{rem:corollaryFhat} The analogous statements are true for $\hat F_\varepsilon$ as well.
\end{remark}

\EEE

\begin{theorem} \label{theorem:intrepperiodic}($\Gamma$-convergence for periodic admissible lattices) Let $\mathcal{L}$ be an admissible periodic set of points and let $F_\varepsilon$ be defined by \eqref{def:Feps} with $\mathcal{L}$ in place of $\Sigma$. Let $A \subset \mathbb{R}^n$ be bounded open set with $\partial A \in \mathrm{Lip}$ or $A=\mathbb{R}^n$. The functionals $F_\varepsilon$ $\Gamma$-converge with respect to weak* convergence of measures to the functional $F_{\mathrm{hom}} \colon \mathcal{M}_+(A) \to [0,+\infty]$ defined by  
\begin{align*}
F_\mathrm{hom}(\mu,A):=\begin{cases} \int_{\partial^* V\cap A} \varphi_{\mathrm{hom}}(\nu)\,\mathrm{d}\mathcal{H}^{n-1} &\text{if } \mu = \rho \mathcal{L}^n\llcorner V\,;\\
+\infty &\text{otherwise.}
\end{cases}
\end{align*}
The function $\varphi_{\mathrm{hom}} \colon \mathbb{R}^n \to [0,+\infty]$ is given by
\begin{align*}
\varphi_{\mathrm{hom}}(\nu) =\lim_{T\to +\infty} \frac{1}{T^{n-1}} \inf\left\{F(X,Q_T^\nu) \colon X\subset \mathcal{L}, \chi_X(i) = u_\nu(i) \text{ for } i \in \mathcal{L} \setminus Q^\nu_{T-l_T}\right\}\,,
\end{align*}
where $l_T \to +\infty$ and $l_T/T \to 0$ as $T\to+\infty$.
\end{theorem}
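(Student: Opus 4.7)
The proof proceeds by reducing to Theorem \ref{theorem:AliCicRuf}. We interpret the deterministic periodic $\mathcal{L}$ as a trivially stationary and ergodic stochastic lattice: choose any probability space $(\Omega,\mathbb{P},\mathcal{F})$, set $\mathcal{L}(\omega)\equiv\mathcal{L}$ and take the trivial action $\tau_z=\mathrm{id}$ for $z\in\mathbb{Z}^n$; after the coordinate change making the periodicity basis equal to $\mathbb{Z}^n$, periodicity yields $\mathcal{L}(\tau_z\omega)=\mathcal{L}=\mathcal{L}+z$ as required by Definition \ref{def:statergod}, and ergodicity holds vacuously. Theorem \ref{theorem:AliCicRuf} then delivers, for every bounded open $A$ with $\partial A\in\mathrm{Lip}$, the $\Gamma$-convergence of $\hat F_\varepsilon$ to $F_{\mathrm{hom}}$ in the strong $L^1(A)$ topology; the integral over $\Omega$ in the cell formula is trivial, so $\varphi_{\mathrm{hom}}$ collapses to the deterministic formula stated in the theorem. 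Remark \ref{rem:corollaryFhat} then upgrades this to $\Gamma$-convergence of $\hat F_\varepsilon\colon\mathcal{M}_+(A)\to[0,+\infty]$ in the weak-$*$ topology of measures.

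The passage from $\hat F_\varepsilon$ to $F_\varepsilon$ rests on the sandwich
\begin{equation*}
\hat F_\varepsilon(u,A)\le F_\varepsilon(u,A)\le \hat F_\varepsilon(u,(A)_{2R\varepsilon}),
\end{equation*}
where the right-hand inequality follows from Lemma \ref{lemma:propertiesadmissiblesets}(ii), since every neighbor of $\varepsilon x\in A$ lies in $(A)_{2R\varepsilon}$. The left inequality immediately yields the $\Gamma$-$\liminf$ bound for $F_\varepsilon$. For the $\Gamma$-$\limsup$, fix $\mu=\rho\mathcal{L}^n\llcorner V$ with $\chi_V\in BV(A)$, and approximate $V$ in the $\mathrm{Per}(\cdot,A)$-strict topology by sets $V_k$ with $\overline{V_k}\Subset A$. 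For each $k$ the $\hat F_\varepsilon$-recovery targeting $\rho\mathcal{L}^n\llcorner V_k$ can be chosen so that the piecewise-constant interpolants $V^k_\varepsilon$ satisfy $\overline{V^k_\varepsilon}\Subset A$ for $\varepsilon$ small; then every nearest-neighbor bond $(x,y)$ with $\varepsilon x\in A$ and $\varepsilon y\notin A$ contributes $0$, so that $F_\varepsilon(\mu^k_\varepsilon,A)=\hat F_\varepsilon(\mu^k_\varepsilon,A)\to F_{\mathrm{hom}}(\rho\mathcal{L}^n\llcorner V_k,A)$, and a Cantor diagonal extraction over $k\to\infty$, $\varepsilon\to 0$, produces the desired recovery sequence for $F_\varepsilon$ on $A$.

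Finally, the $A=\mathbb{R}^n$ case follows by localization. Both $F_\varepsilon(\cdot,\cdot)$ and $F_{\mathrm{hom}}(\mu,\cdot)$ are additive on disjoint open sets, and any weak-$*$ convergent $\mu_\varepsilon$ on $\mathbb{R}^n$ restricts to a weak-$*$ convergent sequence on every bounded open subset. Picking an exhausting family of balls $B_k\uparrow\mathbb{R}^n$ with $\mathcal{H}^{n-1}(\partial^*V\cap\partial B_k)=0$, the $\Gamma$-$\liminf$ on $\mathbb{R}^n$ follows by applying the bounded case on each $B_k$ and passing $k\to\infty$ via monotone convergence, while the $\Gamma$-$\limsup$ is obtained through a Cantor diagonal extraction among the bounded-case recovery sequences, with the global compactness supplied by Proposition \ref{prop:Compactnessgeneral}. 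The main technical obstacle is the boundary-layer step: ensuring that the surplus interactions $F_\varepsilon-\hat F_\varepsilon$ near $\partial A$ do not inflate the $\Gamma$-$\limsup$ requires the compact-support approximation $\overline{V_k}\Subset A$, after which the pointwise containment $\overline{V^k_\varepsilon}\Subset A$ for small $\varepsilon$ drives the excess boundary terms to zero.
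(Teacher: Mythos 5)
Your outline of the reduction to Theorem~\ref{theorem:AliCicRuf} and the passage from $\hat F_\varepsilon$ to $F_\varepsilon$ via the sandwich $\hat F_\varepsilon(u,A)\le F_\varepsilon(u,A)\le \hat F_\varepsilon(u,(A)_{2R\varepsilon})$ is sound, and the $\Gamma$-$\liminf$ argument is correct. However, the $\Gamma$-$\limsup$ step on a bounded Lipschitz set $A$ contains a genuine gap: you propose to approximate $V$ in the $\mathrm{Per}(\cdot,A)$-strict topology by sets $V_k$ with $\overline{V_k}\Subset A$, but such an approximation does not exist in general. If $V$ has a nontrivial trace on $\partial A$ (for instance $V=A$, where $\mathrm{Per}(V,A)=0$), then any $V_k\Subset A$ with $|V_k|\to|V|$ must, by the isoperimetric inequality, satisfy $\mathrm{Per}(V_k,A)=\mathrm{Per}(V_k,\mathbb{R}^n)\ge c>0$, so $\mathrm{Per}(V_k,A)$ cannot converge to $\mathrm{Per}(V,A)$. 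This is precisely the obstruction the paper circumvents by going in the opposite direction: rather than shrinking $V$ inward, it extends $V$ outward to $V_{\mathrm e}\subset\mathbb{R}^n$ (via the $BV$-extension theorem of Garc\'ia-Bravo and Rajala) with $V_{\mathrm e}\cap A=V\cap A$ and, crucially, $\mathcal{H}^{n-1}(\partial^*V_{\mathrm e}\cap\partial A)=0$; one then takes a recovery for $\hat F_\varepsilon$ on the fattened domain $(A)_\delta$ targeting $V_{\mathrm e}$ and sends $\delta\to0$, the boundary-nullity property guaranteeing $F_{\mathrm{hom}}(\rho\chi_{V_{\mathrm e}},(A)_\delta)\to F_{\mathrm{hom}}(\rho\chi_V,A)$. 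Without some such outward mechanism your argument cannot recover the limit on sets of finite perimeter that reach $\partial A$.

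Two smaller remarks. First, in the probabilistic setup you cannot take ``any probability space'' with the trivial action $\tau_z=\mathrm{id}$: for the trivial action, ergodicity demands $\mathbb{P}(B)\in\{0,1\}$ for every measurable $B$, which forces the $\sigma$-algebra to be trivial (as in the paper's choice $(\{0\},\{\{0\},\emptyset\},\delta_0)$). Second, the treatment of $A=\mathbb{R}^n$ is too sketchy at the $\Gamma$-$\limsup$: gluing a recovery on $B_S$ to the empty configuration outside $B_S$ can create a boundary layer of spurious interactions, and a plain diagonal extraction does not control this. The paper handles it by first truncating $V$ at radii $R_k$ chosen via the coarea formula so that $\mathcal{H}^{n-1}(V\cap\partial B_{R_k})$ is small, and then cutting the discrete recovery at a radius chosen by averaging so that the annular shell contains few points; some equivalent device is needed to make the diagonal argument rigorous.
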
 
\begin{proof}  \GGG 
\begin{step}{1}(Probabilistic setup)
We exploit the integral representation result  (Theorem \ref{theorem:AliCicRuf}) to obtain the specific form of the $\Gamma$-limit. We fix $(\Omega, \mathcal{F},\mathbb{P}) = (\{0\},\{\{0\},\emptyset\},\delta_0)$ to be a probability space and a trivial additive and ergodic group action (see Definition 5.1 in \cite{AliCicRuf}) $\tau_z \colon \{0\}\to \{0\}, z \in \mathbb{Z}^3$ given by $\tau_z(0)=0$. With respect to this group action $\mathcal{L}(0) = \mathcal{L}$ is an admissible stationary and ergodic stochastic lattice according to Definition \ref{def:statergod}. In fact, since $\mathcal{L}$ is periodic according to Definition \ref{def:periodicity},  for all $z=(z_1,z_2,z_3) \in \mathbb{Z}^3$ we have
\begin{align*}
\mathcal{L}(0)=\mathcal{L}\,, \quad \mathcal{L}(\tau_z(0)) =\mathcal{L}(0)= \mathcal{L}(0) +  \sum_{k=1}^n z_k e_k  \,.
\end{align*}
Therefore, all conditions of Theorem \ref{theorem:AliCicRuf} are satisfied. 
 This shows that for $\hat F_\varepsilon \colon L^1_{\mathrm{loc}}(A) \to [0,+\infty]$ we have
 \begin{align*}
 \Gamma\text{-}\lim_{\varepsilon\to0} \hat F_\varepsilon(\chi_V,A) = F_{\mathrm{hom}}(\chi_V,A) 
 \end{align*}
for all  $A \subset \mathbb{R}^n, \partial A \in \mathrm{Lip}, \chi_V \in BV(A,\{0,1\})$. Note that, by Corollary \ref{cor:equivalenceGamma}, this is equivalent to saying that for  $\hat F_\varepsilon \colon \mathcal{M}_+(A) \to [0,+\infty]$ we have
\begin{align*}
 \Gamma\text{-}\lim_{\varepsilon\to0} \hat F_\varepsilon(\mu,A) = F_{\mathrm{hom}}(\mu,A)\,. 
\end{align*} 
This concludes Step 1.
\end{step} \\
 \begin{step}{2}($\Gamma$-convergence of $F_\varepsilon$) \GGG
   We use the $\Gamma$-convergence of $\hat{F}_\varepsilon$ obtained in Step~1 in order to prove the $\Gamma$-convergence of $F_\varepsilon$. Let us first prove the result for $A \subset \mathbb{R}^n$ open and bounded with $\partial A \in \mathrm{Lip}$. We note that
\begin{align*}
\hat F_{\varepsilon}(X,A) \leq  F_{\varepsilon}(X,A)\,
\end{align*}
and therefore for all $\mu_\varepsilon \overset{*}{\rightharpoonup} \mu $, we have
\begin{align*}
\liminf_{\varepsilon \to 0}F_{\varepsilon}(\mu_\varepsilon,A) \geq \liminf_{\varepsilon \to 0}\hat F_{\varepsilon}(\mu_\varepsilon,A) \geq F_{\mathrm{hom}}(\mu,A)\,.
\end{align*}
Therefore
$
\Gamma\text{-}\liminf_{\varepsilon\to 0} F_\varepsilon(\mu,A) \geq F_{\mathrm{hom}}(\mu,A)\,.
$
Next, we show the Limsup-inequality. Let  $V\subset A$ be such that 
\begin{align}\label{ineq:Fhomfinite}
F_{\mathrm{hom}}(\rho\chi_V,A) <+\infty\,
\end{align}
since otherwise there is nothing to prove. Using Lemma \ref{lemma:propertiesadmissiblesets}(iv), we obtain that $\mathrm{Per}(V,A) <+\infty$, i.e., $V$ is a set of finite perimeter. In particular, by \cite[Theorem 1.3]{GarciaBravo-Rajala:2021} (noting that $\partial A \in \mathrm{Lip}$ implies that $A$ is an extension domain), there exists an extension $V_\mathrm{e}\subset \mathbb{R}^n$ such that 
\begin{itemize}
\item[(i)] $V_{\mathrm{e}}\cap A=V\cap A$ up to a set of zero $\mathcal{L}^n$-measure;\medskip
\item[(ii)] 
$
\mathrm{Per}(V_{\mathrm{e}},\mathbb{R}^n) \leq C\mathrm{Per}(V,A)\,;
$\medskip
\item[(iii)]
$
\mathcal{H}^{d-1}(\partial^*V_{\mathrm{e}}\cap \partial A) =0\,.
$
\end{itemize}
Now (ii) together with \eqref{ineq:Fhomfinite}  implies, again by Lemma \ref{lemma:propertiesadmissiblesets}(iv), that 
\begin{align*}
F_{\mathrm{hom}}(\rho\chi_{V_{\mathrm{e}}},\mathbb{R}^n) = \int_{\partial^* V_{\mathrm{e}}} \varphi_{\mathrm{hom}}(\nu)\,\mathrm{d}\mathcal{H}^{n-1} <+\infty\,.
\end{align*}
Fix $\delta >0$ and let $\mu_\varepsilon^\delta \to \rho \chi_{V_{\mathrm{e}}}$ weakly* in $\mathcal{M}_+((A)_\delta)$ be such that
\begin{align*}
\limsup_{\varepsilon\to 0} \hat{F}_\varepsilon(\mu_\varepsilon^\delta,(A)_\delta)  \leq F_{\mathrm{hom}}(\rho\chi_{V_{\mathrm{e}}},(A)_\delta)\,.
\end{align*}
Note that, due to Lemma \ref{lemma:propertiesadmissiblesets}(ii), for every $\varepsilon < \delta/(2R)$ we have that
\begin{align*}
  F_{\varepsilon}(X,A)\leq \hat F_{\varepsilon}(X,(A)_\delta) \,.
\end{align*}
Therefore, recalling Definition \ref{def:Gamma-convergence}, we obtain,
\begin{align*}
\Gamma\text{-}\limsup_{\varepsilon \to 0} F_{\varepsilon}(\rho \chi_V,A)\leq \limsup_{\varepsilon \to 0} F_{\varepsilon}(\mu_\varepsilon^\delta,A)\leq \limsup_{\varepsilon \to 0} \hat F_{\varepsilon}(\mu_\varepsilon^\delta,(A)_\delta)\leq F_{\mathrm{hom}}(\rho \chi_{V_{\mathrm{e}}},(A)_\delta)\,.
\end{align*}
Sending $\delta \to 0$ we obtain
\begin{align*}
\Gamma\text{-}\limsup_{\varepsilon \to 0} F_{\varepsilon}(\rho \chi_{V},A) \leq F_{\mathrm{hom}}(\rho \chi_{V_{\mathrm{e}}},\overline{A})= F_{\mathrm{hom}}(\rho \chi_{V_{\mathrm{e}}},A)=F_{\mathrm{hom}}(\rho \chi_V,A)\,,
\end{align*}
where the last equality follows by properties (i) and (iii) of $V_{\mathrm{e}}$. This shows the desired integral representation for all $A \subset \mathbb{R}^n$ such that $\partial A \in \mathrm{Lip}$.
\end{step}\\
\begin{step}{3}(Integral representation on unbounded sets) 
 It remains to prove the integral representation of the $\Gamma$-limit for $\mathbb{R}^n$. The Liminf-inequality follows by monotonicity since for all $R>0$ and $X\subset \varepsilon
 \mathcal{L}$ we have
 \begin{align*}
 F_\varepsilon(X, B_R) \leq F_\varepsilon(X)
 \end{align*}
 and therefore, given $\mu_\varepsilon \to \rho \chi_V$ weakly* in $\mathcal{M}_+(\mathbb{R}^n)$, we have
 \begin{align*}
 \liminf_{\varepsilon \to 0 } F_\varepsilon(\mu_\varepsilon) \geq  \liminf_{\varepsilon \to 0 } F_\varepsilon(\mu_\varepsilon, B_R) \geq F_{\mathrm{hom}}(\rho \chi_V, B_R)\,.
 \end{align*}
The claim follows by taking the supremum over $R >0$. We now turn our attention to the Limsup inequality. We can assume without loss of generality that $V \subset \mathbb{R}^n$ is a set of finite perimeter and 
\begin{align}\label{ineq:Fhomlimsupunbounded}
C\mathrm{Per}(V,\mathbb{R}^n)\leq\int_{\partial^* V}\varphi(\nu)\,\mathrm{d}\mathcal{H}^{n-1}= F_{\mathrm{hom}}(\rho \chi_V,\mathbb{R}^n) <+\infty
\end{align} 
since otherwise there is nothing to prove. By the isoperimetric inequality there exists $C>0$ such that
\begin{align*}
\min\{|V|, |\mathbb{R}^n\setminus V| \}^{\frac{n-1}{n}} \leq C\mathrm{Per}(V,\mathbb{R}^n)\,.
\end{align*}
Without loss of generality we assume that $|V| <+\infty$. By the Fleming-Rishel formula we can find $\{R_k\}_k$ such that $R_k \to +\infty$ and
\begin{align}\label{ineq:smallness}
{\rm (i)}\quad \mathcal{H}^{n-1}(V\cap \partial B_{R_k})\leq \frac{1}{k}\quad \text{and}\quad {\rm (ii)}\quad  |V\cap B_{R_k}^c|\leq \frac{1}{k}\,.
\end{align}
We define $V_k = V\cap B_{R_k}$.
Then, thanks to \eqref{ineq:smallness}(ii), $\chi_{V_k} \to \chi_V $ in $L^1(\mathbb{R}^n)$ and thus also $ \rho \chi_{V_k} \to \rho\chi_V $ weakly* in $\mathcal{M}_+(\mathbb{R}^n)$. Furthermore, \eqref{ineq:smallness}{\rm (i)} implies that 
\begin{align*}
\lim_{k\to+\infty} F_{\mathrm{hom}}(\rho\chi_{V_k}) =F_{\mathrm{hom}}(\rho\chi_V)\,.
\end{align*}
It therefore suffices to construct the recovery sequence for $\rho\chi_{V_k}$. Let $S_k = R_k+2R$ and let $\mu_\varepsilon \to \rho_{V_k}$ in $L^1(B_S)$ such that
\begin{align} \label{ineq:recoveryBS}
\limsup_{\varepsilon \to 0} F_\varepsilon(\mu_\varepsilon,B_{S_k})\leq  F_{\mathrm{hom}}(\rho \chi_V,B_{S_k}) \,.
\end{align}
We modify $\mu_\varepsilon$ such that 
\begin{align}\label{ineq:comparison}
F_\varepsilon(\mu_\varepsilon) \leq F_\varepsilon(\mu_\varepsilon,B_{S_k}) +o(1)\,,
\end{align}
and this, by \eqref{ineq:recoveryBS}, proves the statement. By Lemma \ref{lemma:equivalenceoftopgeneral}, there exists $\eta_\varepsilon \to 0$ such that $|V_\varepsilon\cap A_{R_k,S}| \leq \eta_\varepsilon$ with $V_\varepsilon$ defined in \eqref{def:piecewiseconstantinterpolant}. Now, let us take $k \in \{ \lceil \frac{R_k}{\varepsilon} +R\rceil,\ldots, \lfloor \frac{S}{\varepsilon}-3R\rfloor\}=:\mathcal{K}_\varepsilon$. Noting that $\#\mathcal{K}_\varepsilon \approx \varepsilon^{-1}$, Lemma \ref{lemma:propertiesadmissiblesets}(i) implies that there exists $k_\varepsilon \in \mathcal{K}_\varepsilon$ such that
\begin{align*}
\omega_n \left(\frac{r}{2}\right)^n  \varepsilon^{n-1} \#(X_\varepsilon \cap A_{k_\varepsilon \varepsilon,(k_\varepsilon+3R)\varepsilon}) &\leq \omega_n \left(\frac{r}{2}\right)^n \varepsilon^n \sum_{k \in \mathcal{K}_\varepsilon}  \#(X_\varepsilon\cap A_{k \varepsilon,(k+3R)\varepsilon}) \\&\leq C |V_\varepsilon\cap A_{R_k,S}| \leq C\eta_\varepsilon\,.
\end{align*}
Here, we have used that for $k \in \mathcal{K}_\varepsilon$ fixed
\begin{align*}
A_{k\varepsilon,(k+3R)\varepsilon} \cap A_{j\varepsilon,(j+3R)\varepsilon}  \neq \emptyset 
\end{align*}
for finitely many indices $j$ independent of $\varepsilon$ (clearly for $j > k + 3R$ or $j < k-3R$ the intersection is empty).
We can therefore define $\hat X_\varepsilon = \mathcal{L} \cap B_{k_\varepsilon}$ and employ Lemma \ref{lemma:propertiesadmissiblesets}(ii) to obtain
\begin{align*}
F_\varepsilon(\hat{X}_\varepsilon) = F_\varepsilon(\hat{X}_\varepsilon,B_{k_\varepsilon}) &\leq F_\varepsilon(X_\varepsilon, B_{k_\varepsilon})+ \overline{c}\max_{x \in \mathcal{L}} \#\mathcal{NN}(x)\varepsilon^{n-1} \#(X_\varepsilon\cap A_{k_\varepsilon \varepsilon,(k_\varepsilon+3R)\varepsilon})\\&\leq F_\varepsilon(X_\varepsilon,B_S) +  C\eta_\varepsilon\,,
\end{align*}
which proves \eqref{ineq:comparison} and with it the claim.
\end{step}
 \EEE
\end{proof}

\EEE

In the following we show that for minimizing sequences we can improve Proposition \ref{prop:Compactnessgeneral} to obtain strong $L^1(\mathbb{R}^n)$ compactness. This implies that the rescaled empirical measures of such sequences converge to a  suitably normalized Wulff shape for the limiting perimeter energy.

\begin{lemma}\label{lemma:nucleation}(Nucleation Lemma) For every $(p_0,v_0) \in (0,+\infty)^2$ there exists $m=m(p_0,v_0)>0$ such that if $V \subset \mathbb{R}^n$, $\mathrm{Per}(V) \leq p_0$, $|V|\geq v_0$, then there exists $x\in \mathbb{R}^n$ such that
\begin{align*}
|V\cap B_1(x)| \geq m\,.
\end{align*}

\end{lemma}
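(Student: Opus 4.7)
The plan is to argue by contradiction, assuming that $|V \cap B_1(x)| < m$ for every $x \in \mathbb{R}^n$ with a threshold $m>0$ to be fixed at the end, and to combine the relative isoperimetric inequality on unit balls with a covering of $\mathbb{R}^n$ having bounded overlap, in order to produce an estimate of the form $|V| \lesssim m^{1/n}\,\mathrm{Per}(V)$ which is incompatible with the hypotheses as soon as $m$ is small enough in terms of $p_0$ and $v_0$.

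The first step is to fix a countable cover $\{B_1(x_i)\}_{i \in I}$ of $\mathbb{R}^n$ with overlap bounded by a dimensional constant $N=N(n)$: one can take the centers $x_i$ on a cubic lattice of side $2/\sqrt{n}$, so that every point of $\mathbb{R}^n$ lies in at most $N$ of the balls. The second step is to apply on each ball the standard relative isoperimetric inequality
\[
\min\bigl\{|V \cap B_1(x_i)|,\, |B_1(x_i) \setminus V|\bigr\}^{(n-1)/n} \leq C_{\mathrm{iso}}\, \mathrm{Per}(V, B_1(x_i)),
\]
with a dimensional constant $C_{\mathrm{iso}}=C_{\mathrm{iso}}(n)$. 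Under the contradiction hypothesis, and provided $m \leq \omega_n/2$, the minimum on the left is realized by $|V \cap B_1(x_i)|$ for every $i$; multiplying both sides by $|V \cap B_1(x_i)|^{1/n} \leq m^{1/n}$ then gives the linearized bound
\[
|V \cap B_1(x_i)| \leq C_{\mathrm{iso}}\, m^{1/n}\, \mathrm{Per}(V, B_1(x_i)).
\]

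Summing over $i \in I$, the covering property yields $|V| \leq \sum_i |V \cap B_1(x_i)|$ (since $\sum_i \chi_{B_1(x_i)} \geq 1$ on $\mathbb{R}^n$), while the bounded overlap, applied to the Radon measure $|D\chi_V|$, gives $\sum_i \mathrm{Per}(V, B_1(x_i)) \leq N\, \mathrm{Per}(V)$. Combining these with the previous estimate yields
\[
v_0 \leq |V| \leq C_{\mathrm{iso}}\, N\, m^{1/n}\, \mathrm{Per}(V) \leq C_{\mathrm{iso}}\, N\, m^{1/n}\, p_0.
\]
Choosing at the outset $m := \tfrac{1}{2}\min\{\omega_n/2,\,(v_0/(C_{\mathrm{iso}}\, N\, p_0))^n\}$ contradicts this inequality, and the resulting value of $m$ depends only on $p_0$, $v_0$ and the dimension $n$. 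The only mildly delicate point is making sure that the smaller side of the relative isoperimetric inequality is indeed $|V \cap B_1(x_i)|$ uniformly in $i$, which is precisely why the threshold $m \leq \omega_n/2$ is built into the choice of $m$; beyond this, the argument uses only standard tools from geometric measure theory.
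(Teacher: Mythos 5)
Your proof is correct, and it rests on the same two pillars as the paper's: a contradiction hypothesis $|V\cap B_1(x)|<m$ for all $x$, combined with the relative isoperimetric inequality on unit balls in the "linearized" form $|V\cap B_1| \leq C\,m^{1/n}\,\mathrm{Per}(V,B_1)$. Where you differ is the globalization step. You discretize by fixing a countable cover of $\mathbb{R}^n$ by unit balls with overlap bounded by a dimensional constant $N(n)$ and then sum, so the overlap constant appears explicitly in the final choice of $m$. The paper instead integrates over a \emph{continuum} of centers: writing $\omega_n|V| = \int_{\mathbb{R}^n} |V\cap B_1(y)|\,\mathrm{d}y$ by Fubini, applying the pointwise isoperimetric bound under the integral, and then using Fubini again to convert $\int \mathcal{H}^{n-1}(\partial^*V\cap B_1(y))\,\mathrm{d}y$ back into $\omega_n\,\mathcal{H}^{n-1}(\partial^*V)$. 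The factors $\omega_n$ cancel, so the continuous averaging sidesteps the overlap constant $N$ entirely and yields the cleaner threshold $m<\min\{\omega_n/2,(C_B v_0/p_0)^n\}$. Both routes are valid and give a constant $m=m(p_0,v_0,n)$; yours is perhaps more elementary in that it needs no double integral, while the paper's is slicker and avoids tracking the covering multiplicity.
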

\begin{proof} Assume that $|V\cap B_1(x)| <\delta \leq \omega_n/2$ for all $x \in \mathbb{R}^n$. Then, noting that $\chi_{B_1(x)}(y)=\chi_{B_1(y)}(x)$ for all $x,y \in \mathbb{R}^n$, we obtain
\begin{align}\label{eq:onenucleaction}
\omega_n |V| = \int_{x \in \mathbb{R}^n}\int_{y \in \mathbb{R}^n} \chi_V(x)\chi_{B_{1}(x)}(y)\,\mathrm{d}y\,\mathrm{d}x = \int_{x \in \mathbb{R}^n}\int_{y \in \mathbb{R}^n} \chi_{V\cap B_1(y)}(x)\,\mathrm{d}y\,\mathrm{d}x \,.
\end{align}
Due to the assumption we have $|V\cap B_1(x)| <\delta $ and therefore due to the relative isoperimetric inequality and Fubini's Theorem, we obtain
\begin{align}\label{eq:twonucleaction}
\begin{split}
\int_{x \in \mathbb{R}^n}\int_{y \in \mathbb{R}^n} \chi_{V\cap B_1(y)}(x)\,\mathrm{d}y\,\mathrm{d}x= \int_{y \in \mathbb{R}^n}|V\cap B_1(y)|\,\mathrm{d}y &\leq  \delta^{1/n}\int_{y \in \mathbb{R}^n} |V\cap B_1(x)|^{1-\frac{1}{n}}\,\mathrm{d}y  \\&\leq \frac{\delta^{1/n}}{C_B}\int_{y \in \mathbb{R}^n}  \mathcal{H}^{n-1}(\partial^* V\cap B_1(y))\,\mathrm{d}y\,,
\end{split}
\end{align}
where $C_B >0$ denotes the relative isoperimetric constant of $B_1$. Now, again due to Fubini's Theorem, we obtain
\begin{align*}
\int_{y \in \mathbb{R}^n}  \mathcal{H}^{n-1}(\partial^* V\cap B_1(y))\,\mathrm{d}y &= \int_{y \in \mathbb{R}^n}  \int_{x\in \mathbb{R}^n} \chi_{B_1(y)}(x) \, \mathrm{d}\mathcal{H}^{n-1}\lfloor_{\partial^* V}(x) \,\mathrm{d}y \\&= \int_{x \in \mathbb{R}^n}\int_{y \in \mathbb{R}^n} \chi_{B_1(x)}(y)\,\mathrm{d}y\, \mathrm{d}\mathcal{H}^{n-1}\lfloor_{\partial^* V}(x) = \omega_n \mathcal{H}^{n-1}(\partial^* V)
\end{align*}
This together with \eqref{eq:onenucleaction} and \eqref{eq:twonucleaction} leads to
\begin{align*}
 |V| \leq \frac{\delta^{1/n}}{C_B} \mathrm{Per}( V)\,.
\end{align*}
Then, we conclude that $\delta \geq \left(C_B|V|/\mathrm{Per}(V)  \right)^n$ and thus for any $m < \min \{\omega_n/2, \left( C_B v/p_0\right)^n\}$ the thesis holds.
\end{proof}
%

\begin{lemma}(Concentration Lemma)\label{lemma:concentration} \GGG Let $\{X_\varepsilon\}_\varepsilon$ and $\{n_\varepsilon\}_\varepsilon \subset \mathbb{N}$  be  such that for all $\varepsilon>0$ $X_\varepsilon \subset \varepsilon\mathcal{L}$, $\# X_\varepsilon =n_\varepsilon$, $n_\varepsilon \varepsilon^n \to \rho v$ as $\varepsilon \to 0$, and 
\begin{align*}
F_{\varepsilon}(X_\varepsilon)= \min_{\# X =n_\varepsilon} F_{\varepsilon}(X)\,.
\end{align*}
Then, there exists $\{\tau_\varepsilon\}_\varepsilon \subset \mathbb{R}^n$ such that $\tau_\varepsilon \in \varepsilon \, \mathrm{span}_{\mathbb{Z}}\{e_1,\ldots, e_n\}$, and $\mu_\varepsilon(\cdot-\tau_\varepsilon)  \rightharpoonup^* \rho \chi_{W}$, where $W$ is the Wulff shape of $\varphi_{\mathrm{hom}}$ defined in Theorem \ref{theorem:intrepperiodic} and such that $|W|= v$.
\end{lemma}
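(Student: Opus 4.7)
The plan is to combine the nucleation and compactness machinery above with a Lions-type concentration-compactness argument, and then identify the limit via the anisotropic isoperimetric inequality for $\varphi_{\mathrm{hom}}$. First I would bound $F_\varepsilon(X_\varepsilon)$ from above by a competitor: choosing radii $r_\varepsilon \to r_0$ with $|B_{r_0}| = v$ so that $\#(\varepsilon\mathcal L \cap B_{r_\varepsilon}) = n_\varepsilon$ and setting $Y_\varepsilon := \varepsilon\mathcal L \cap B_{r_\varepsilon}$, Lemma \ref{lemma:propertiesadmissiblesets}(iv) yields $F_\varepsilon(Y_\varepsilon) \leq C\,\mathrm{Per}(V_\varepsilon(Y_\varepsilon)) \leq C$, and minimality of $X_\varepsilon$ gives $F_\varepsilon(X_\varepsilon) \leq C$. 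The reverse inequality in Lemma \ref{lemma:propertiesadmissiblesets}(iv) then bounds $\mathrm{Per}(V_\varepsilon)$ uniformly, and since $|V_\varepsilon| = n_\varepsilon\varepsilon^n/\rho \to v$ the Nucleation Lemma \ref{lemma:nucleation} supplies $x_\varepsilon \in \mathbb{R}^n$ with $|V_\varepsilon \cap B_1(x_\varepsilon)| \geq m > 0$. I then let $\tau_\varepsilon$ be the point of $\varepsilon\,\mathrm{span}_\mathbb{Z}\{e_1,\ldots,e_n\}$ nearest to $x_\varepsilon$; by $\mathbb{Z}^n$-periodicity of $\mathcal L$, $X_\varepsilon - \tau_\varepsilon \subset \varepsilon\mathcal L$ and $F_\varepsilon(X_\varepsilon - \tau_\varepsilon) = F_\varepsilon(X_\varepsilon)$. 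Proposition \ref{prop:Compactnessgeneral} with a diagonal extraction produces a subsequence along which $\tilde V_\varepsilon := V_\varepsilon - \tau_\varepsilon \to V^\ast$ in $L^1_{\mathrm{loc}}(\mathbb{R}^n)$ for some set of finite perimeter with $|V^\ast \cap B_2| \geq m$, and Lemma \ref{lemma:equivalenceoftopgeneral} translates this into $\mu_\varepsilon(\cdot - \tau_\varepsilon) \overset{*}{\rightharpoonup} \rho\chi_{V^\ast}$.

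The crux is to show $|V^\ast| = v$. Suppose instead $|V^\ast| = \alpha \in [m, v)$. A Lions-type dichotomy analysis of the concentration function $Q_\varepsilon(t) := \sup_x|V_\varepsilon\cap B_t(x)|$, regularized by the uniform perimeter bound, extracts radii $t_\varepsilon, T_\varepsilon$ with $t_\varepsilon \to \infty$, $t_\varepsilon/T_\varepsilon \to 0$ and a second centre $\tau^{(2)}_\varepsilon \in \varepsilon\,\mathrm{span}_\mathbb{Z}\{e_1,\ldots,e_n\}$ with $|\tau^{(2)}_\varepsilon - \tau_\varepsilon| \gg T_\varepsilon$, such that a positive share of the remaining mass concentrates in $B_{t_\varepsilon}(\tau^{(2)}_\varepsilon)$ while the annuli $B_{T_\varepsilon}(\tau^{(j)}_\varepsilon) \setminus B_{t_\varepsilon}(\tau^{(j)}_\varepsilon)$ have vanishing volume. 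Because $T_\varepsilon - t_\varepsilon \gg R\varepsilon$, there are no nearest-neighbour interactions between the clusters $X^{(j)}_\varepsilon := X_\varepsilon \cap B_{t_\varepsilon}(\tau^{(j)}_\varepsilon)$, so
\begin{align*}
F_\varepsilon(X_\varepsilon) \,\geq\, F_\varepsilon(X^{(1)}_\varepsilon) + F_\varepsilon(X^{(2)}_\varepsilon) - o(1).
\end{align*}
Applying the $\Gamma$-liminf of Theorem \ref{theorem:intrepperiodic} to each (re-centred) cluster, and the anisotropic isoperimetric inequality for $\varphi_{\mathrm{hom}}$ with Wulff shape $W$ of volume $v$, gives
\begin{align*}
\liminf_{\varepsilon\to 0}F_\varepsilon(X_\varepsilon) \,\geq\, F_{\mathrm{hom}}(\rho\chi_W)\Bigl((\alpha/v)^{(n-1)/n} + ((v-\alpha)/v)^{(n-1)/n}\Bigr) \,>\, F_{\mathrm{hom}}(\rho\chi_W),
\end{align*}
by strict concavity of $t\mapsto t^{(n-1)/n}$. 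Conversely, discretizing a rescaled translate $(1+\delta_\varepsilon)W + \eta_\varepsilon$ of the Wulff shape so that its lattice intersection has exactly $n_\varepsilon$ points (with $\delta_\varepsilon, \eta_\varepsilon \to 0$) and invoking the $\Gamma$-limsup of Theorem \ref{theorem:intrepperiodic} yields $\limsup F_\varepsilon(X_\varepsilon) \leq F_{\mathrm{hom}}(\rho\chi_W)$. The two estimates are contradictory, so $|V^\ast|=v$.

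With $|V^\ast|=v$ in hand, equality in the above chains forces $V^\ast$ to realize the minimum in the anisotropic isoperimetric problem at volume $v$, and the uniqueness (up to translation) of the Wulff shape for a non-degenerate convex, positively homogeneous density (cf.\ \cite{FonsecaMueller}) gives $V^\ast = W + x_0$ for some $x_0 \in \mathbb{R}^n$. Replacing $\tau_\varepsilon$ by the point of $\varepsilon\,\mathrm{span}_\mathbb{Z}\{e_1,\ldots,e_n\}$ closest to $\tau_\varepsilon + x_0$ shifts the limit by $-x_0$ up to an $O(\varepsilon)$ error, producing $\mu_\varepsilon(\cdot - \tau_\varepsilon) \overset{*}{\rightharpoonup} \rho\chi_W$; since every subsequential limit must coincide with $\rho\chi_W$, the Urysohn property propagates the convergence to the full sequence. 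I expect the main obstacle to be the decoupling estimate in the second paragraph: one must argue that the nearest-neighbour energy severed by thinning the annular buffer to zero volume is itself $o(1)$, which in turn hinges on the uniformly finite interaction range (Lemma \ref{lemma:propertiesadmissiblesets}(ii)) and on a judicious choice of $t_\varepsilon \ll T_\varepsilon$, larger than $R\varepsilon$ yet compatible with the vanishing annular volume extracted from the dichotomy step.
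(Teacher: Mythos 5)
Your overall scaffolding — energy bound via a ball competitor, nucleation, re-centering on a lattice vector, compactness, and finally identification of the limit via uniqueness in the anisotropic isoperimetric problem — is sound and close in spirit to the paper. The paper's own proof (Steps 1--5) is organized around explicit competitor constructions: it first shows the nucleated limit $V_1$ is the Wulff crystal $W_{v_1}$ by replacing $X_\varepsilon$ inside a ball with a discretized $W_{v_1}$ (Step 4), and then shows $v_1 = v$ by extracting a second cluster $W_{v_2}$ and replacing the pair with a single discretized $W_{v_1+v_2}$ (Step 5), using strict subadditivity of $v\mapsto F_{\mathrm{hom}}(W_v)$ to make the surgery strictly lower the energy. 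You instead go for a match between a $\Gamma$-liminf lower bound on the split configuration and a $\Gamma$-limsup upper bound from discretizing a single Wulff shape. That is a legitimate alternative strategy, but as written it has a genuine gap.

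The gap is in the lower bound. The Lions-type dichotomy (or a second application of the Nucleation Lemma) only produces a second cluster carrying \emph{some} positive mass $v_2 > 0$; it does not guarantee $v_2 = v - \alpha$. The inequality you write,
\begin{align*}
\liminf_{\varepsilon\to 0} F_\varepsilon(X_\varepsilon) \,\geq\, F_{\mathrm{hom}}(\rho\chi_W)\bigl((\alpha/v)^{(n-1)/n} + ((v-\alpha)/v)^{(n-1)/n}\bigr),
\end{align*}
therefore overstates what is available: what you actually get is $\liminf F_\varepsilon(X_\varepsilon) \geq F_{\mathrm{hom}}(\rho\chi_W)\bigl((\alpha/v)^{(n-1)/n} + (v_2/v)^{(n-1)/n}\bigr)$ with $\alpha + v_2 \leq v$, and this need \emph{not} exceed $F_{\mathrm{hom}}(\rho\chi_W)$: for instance with $n=3$ and $\alpha = v_2 = v/4$ one has $2\cdot 4^{-2/3}\approx 0.79 < 1$. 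So two clusters alone do not contradict the upper bound. To salvage the approach you would have to iterate the nucleation/compactness extraction until the masses $v_1, v_2, v_3, \dots$ provably sum to $v$ (using that the uniform perimeter bound and a positive remaining volume always force another nucleation, and a limiting argument), so that the summed lower bound exceeds the upper bound by strict subadditivity. This is doable but is exactly the extra bookkeeping the paper avoids: their Step 5 merges just the two clusters into a single $W_{v_1+v_2}$, keeps $X_\varepsilon$ unchanged outside, and derives a strict energy decrease \emph{locally}, without any need to account for the escaping mass. If you keep your matching-bounds route, you must close the iterated-extraction argument explicitly; otherwise, adopt the local-surgery comparison.
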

\begin{proof}  Let $\{X_\varepsilon\}_\varepsilon$ be as in the assumptions \GGG of \EEE the Lemma. \\
\begin{step}{1}\GGG(Energy bound) \EEE We first show that 
\begin{align}\label{ineq:perimeterbound}
\sup_{\varepsilon>0} F_{\varepsilon}(X_\varepsilon) <+\infty\,.
\end{align}
To this aim, for each $\varepsilon>0$ we construct a competitor $Y_\varepsilon$ such that $F_{\varepsilon}(Y_\varepsilon) \leq C$ for some constant $C>0$ independent of $\varepsilon$. To this end let $r_\varepsilon >0$ be the maximal $s>0$ such that
\begin{align*}
\# (B_s \cap \varepsilon \mathcal{L}) \leq n_\varepsilon\,.
\end{align*}
Due to Lemma \ref{lemma:propertiesadmissiblesets}{\rm (i)}, we have that $r_\varepsilon \leq r_0 <+\infty$ and
\begin{align}\label{ineq:layerbound}
\omega_n \left(\frac{r}{2}\right)^n \varepsilon^n\#\left(A_{r_\varepsilon, r_\varepsilon +2\varepsilon R}\cap \varepsilon\mathcal{L}\right) \leq |A_{r_\varepsilon-\varepsilon r/2,r_\varepsilon+3\varepsilon R}| \leq C\varepsilon  r_\varepsilon^{n-1}\leq C\varepsilon\,.
\end{align}
By the maximality in the choice of $r_\varepsilon$ we have $\#(B_{r_\varepsilon + \varepsilon r} \cap \varepsilon\mathcal{L}) > n_\varepsilon$ and thus 
\begin{align*}
n_\varepsilon -C\varepsilon^{1-n} \leq \#(B_{r_\varepsilon} \cap \varepsilon\mathcal{L}) \leq n_\varepsilon\,.
\end{align*}
We set $Y_\varepsilon = (B_{r_\varepsilon} \cap \varepsilon\mathcal{L}) \cup Y^0_\varepsilon$, where $Y_\varepsilon^0 \subset \varepsilon\mathcal{L}\setminus B_{r_\varepsilon} $ such that $\# Y_\varepsilon = n_\varepsilon$. Due to Lemma \ref{lemma:propertiesadmissiblesets} (ii), (iii), the minimality of $X_\varepsilon$, and \eqref{ineq:layerbound}, we have
\begin{align*}
\begin{split}
F_{\varepsilon}(X_\varepsilon)&\leq F_{\varepsilon}(Y_\varepsilon)\\&\leq \sum_{x \in \mathcal{L}_\varepsilon\cap B_{r_\varepsilon}} \varepsilon^{n-1}\#(\mathcal{NN}_\varepsilon(x)\setminus X) + C\varepsilon^{n-1}\#Y_\varepsilon^0 \leq C\#\left(A_{r_\varepsilon, r_\varepsilon +2\varepsilon R}\cap \mathcal{L}\right)\leq C\,.
\end{split}
\end{align*}
This shows \eqref{ineq:perimeterbound} and concludes Step 1. \end{step}\\
\begin{step}{2}(Nucleation) We show that if \eqref{ineq:perimeterbound} holds, then there exist $v_1>0$ and $\{\tau_\varepsilon^1\}_\varepsilon \subset \mathbb{R}^n$ such that $\tau_\varepsilon^1 \in \varepsilon \,\mathrm{span}_\mathbb{Z}\{e_1,\ldots,e_n\}$ and, up to subsequences,
\begin{align}\label{eq:convergence}
\mu_\varepsilon(\cdot-\tau_\varepsilon^1) \overset{*}{\rightharpoonup} \rho \chi_{V_1} \text{ in the sense of measures,}
\end{align}
where $ |V_1| := v_1 \in (m,v]$, with $m = m(p_0,v_0) >0$ given by Lemma \ref{lemma:nucleation}. In fact, due to \eqref{ineq:perimeterbound} and Lemma \ref{lemma:propertiesadmissiblesets}(iv) there exists $p_0 >0$ such that 
$
\mathrm{Per}(V_\varepsilon)\leq p_0$ for all $\varepsilon>0
$.
Additionally, due to Lemma \ref{lemma:propertiesadmissiblesets}(i) and the fact that $n_\varepsilon \varepsilon^n \to v>0$, we have that $|V_\varepsilon| \geq v_1 >0$ for all $\varepsilon >0$ small enough. Thus, by Lemma \ref{lemma:nucleation} there exists $m=m(p_0,v_1)$ and $\{\tau_\varepsilon^1\}_\varepsilon \subset \mathbb{R}^d$ such that
\begin{align}\label{ineq:massball}
|V_\varepsilon \cap B_1(\tau_\varepsilon^1)| \geq m\,.
\end{align}
Actually, by lowering $m$ a bit we can assume without loss of generality that $\tau_\varepsilon^1 \in \varepsilon\, \mathrm{span}_\mathbb{Z}\{e_1,\ldots,e_n\}$. By \eqref{ineq:massball}, \eqref{ineq:perimeterbound}, Proposition \ref{prop:Compactnessgeneral}, and Lemma \ref{lemma:equivalenceoftopgeneral}, we get \eqref{eq:convergence} up to passing to a subsequence.
\end{step}\\ 
\begin{step}{3}(Splitting of the energy) We show that for any $0<\delta <|V_1|$ there exists $S_\delta=S_\delta(V_1) >0$ big enough and $S_\varepsilon\in (S_\delta, S_\delta +1)$  such that, for $\varepsilon>0$ small enough, there holds
\begin{align}\label{ineq:cardstrip}
\#(X_\varepsilon \cap B_{S_\delta}) \geq  \omega_n^{-1}R^{-n}   (|V_1|-\delta)\varepsilon^{-n}  \quad \text{and} \quad \#(X_\varepsilon \cap A_{S_\varepsilon,S_\varepsilon+3\varepsilon R}) \leq  \delta\varepsilon^{1-n} \,.
\end{align}
First of all,   we find $S_\delta$ such that for $\varepsilon>0$ small enough
\begin{align*}
|V_1 \cap B_{S_\delta-R\varepsilon}|\geq  |V_1 \cap B_{S_\delta-1}| \geq |V_1| -\delta/2 \text{ and } |V_1 \cap A_{S_\delta, S_\delta +1}| \leq |V_1 \setminus B_{S_\delta -R\varepsilon}| \leq \delta/2 \,.
\end{align*}
 Note that, due to Step~1 and Step~2, $\chi_{V_\varepsilon(\cdot-\tau_\varepsilon^1)} \to \chi_{V_1}$ in $L^1(B_{S_\delta+1})$. Due to Lemma \ref{lemma:propertiesadmissiblesets}(i), we have for $\varepsilon>0$ small enough
\begin{align}\label{ineq:cardstrip2}
&\omega_n \varepsilon^n R^n\#(X_\varepsilon \cap B_{S_\delta})\geq |V_\varepsilon \cap B_{S_\delta-R\varepsilon}|\geq  |V_1|-\delta\,; \\& \label{ineq:cardstrip22}
\omega_n \varepsilon^n \left(\frac{r}{2}\right)^n\#(X_\varepsilon \cap A_{S_\delta+R\varepsilon,S_\delta+1-R\varepsilon})\leq |V_\varepsilon \cap A_{S_\delta,S_\delta+1}|\leq  \delta\,.
\end{align}
 Next, we find $S_\varepsilon$ by averaging: We choose $k_\varepsilon \in \left\{\lceil \tfrac{S}{\varepsilon}+R\rceil,\ldots, \lfloor \tfrac{S+1}{\varepsilon}-4R\rfloor \right\} =:\mathcal{K}_\varepsilon$ (note that $\mathcal{K}_\varepsilon \neq \emptyset$ and actually $\#\mathcal{K}_\varepsilon \approx \tfrac{S}{\varepsilon} $ for $\varepsilon>0$ small enough) such that for some $C>1$
\begin{align}\label{ineq:Cdelta}
\begin{split}
\varepsilon^{n-1}\#(X_\varepsilon \cap A_{k_\varepsilon \varepsilon, (k_\varepsilon +3R)\varepsilon}) &\leq C\sum_{k \in \mathcal{K}_\varepsilon} \varepsilon^n \#(X_\varepsilon \cap A_{k \varepsilon, (k +3R)\varepsilon})\\&\leq C\varepsilon^n \#(X_\varepsilon \cap A_{S-R\varepsilon,S+1-R\varepsilon}) \leq C\delta\,.
\end{split}
\end{align}
In the latter estimate, we have used \eqref{ineq:cardstrip22} and that, for $k \in \mathcal{K}_\varepsilon$ fixed, we have
\begin{align*}
A_{k\varepsilon,(k+3R)\varepsilon} \cap A_{j\varepsilon,(j+3R)\varepsilon}  \neq \emptyset 
\end{align*}
for at most a finite number of indices $j$ independent of $\varepsilon$ (clearly for $j > k + 3R$ or $j < k-3R$ the intersection is empty). Up to replacing $\delta$ by $C^{-1}\delta$, \eqref{ineq:cardstrip2} and \eqref{ineq:Cdelta} give \eqref{ineq:cardstrip} and this concludes Step~3.
\end{step}\\
In the next steps we use the following notation.
For any $\lambda >0$ let $X_\varepsilon^\lambda \subset (W_\lambda)_{1}$ be such that, setting $V_\varepsilon^\lambda= V_\varepsilon(X_\varepsilon^{\lambda})$, we have $\chi_{V_\varepsilon^\lambda} \to \chi_{W_\lambda}$ in $L^1(\mathbb{R}^n)$, and
\begin{align}\label{ineq:optimizer}
\lim_{\varepsilon \to 0} F_\varepsilon(X_\varepsilon^\lambda)=\limsup_{\varepsilon \to 0} F_\varepsilon(X_\varepsilon^\lambda) = \limsup_{\varepsilon \to 0} F_\varepsilon(X_\varepsilon^\lambda, (W_\lambda)_{1}) = F_{\mathrm{hom}}(\GGG\rho\EEE\chi_{W_\lambda})\,.
\end{align}
\begin{step}{4}(Identification of $V_1$) We claim that $V_1= W_{v_1}$ up to null sets, where $W_{v_1}$ is the Wulff Shape of $F_{\mathrm{hom}}$ such that $|W_{v_1}|=v_1$. Assume by contradiction that this were not the case. By the anisotropic isoperimetric inequality we have that \GGG
\begin{align*}
0< \frac{1}{2}\left( F_{\mathrm{hom}}(\rho\chi_{V_1})- F_{\mathrm{hom}}(\rho\chi_{W_{v_1}})\right)=: \eta\,.
\end{align*} \EEE
Up to translating, we may assume that $\tau_\varepsilon^1=0$. Then, there exists $S_0>0$  big enough such that for all $S \geq S_0$ we have $W_{v_1} \subset\subset B_S$ and
\begin{align*}
F_{\mathrm{hom}}(\rho\chi_{V_1},B_S) \geq F_{\mathrm{hom}}(\rho\chi_{V_1}) -\eta\,.
\end{align*}
By Theorem \ref{theorem:intrepperiodic}, we have that
\begin{align}\label{ineq:liminfBallXeps}
\begin{split}
\liminf_{\varepsilon \to 0} F_\varepsilon(X_\varepsilon, B_S) - F_{\mathrm{hom}}(\rho\chi_{W_{v_1}},B_S) &\geq F_{\mathrm{hom}}(\rho\chi_{V_1},B_S)- F_{\mathrm{hom}}(\rho\chi_{W_{v_1}},B_S) \\&\geq F_{\mathrm{hom}}(\rho\chi_{V_1})-\eta- F_{\mathrm{hom}}(\rho\chi_{W_{v_1}})\geq \eta >0\,. 
\end{split}
\end{align}
We now construct a competitor $\hat{X}_\varepsilon$  for $\varepsilon>0$ small enough such that $\#X_\varepsilon=n_\varepsilon$ and
\begin{align}\label{ineq:contradictionStep4}
F_\varepsilon(\hat X_\varepsilon) < F_\varepsilon(X_\varepsilon)\,.
\end{align}
This contradicts the assumptions of the Lemma on the minimality of $X_\varepsilon$ and therefore $V_1= W_{v_1}$ up to null sets. 
Let us take $\delta :=\delta_\eta = (2C \max_{x \in \mathcal{L}} \#\mathcal{NN}(x))^{-1}\eta $ with $C$ as in \eqref{ass:boundcnn}  and let $S_\delta$, $S_\varepsilon \in (S_\delta,S_\delta+1)$ be as in Step~3 with $S_\delta \geq \mathrm{diam}\, W_{v_1} +1$. We define 
\begin{align}\label{def:Xhat}
\hat{X}_\varepsilon := X_\varepsilon^{v_1} \cup (X_{\varepsilon} \setminus B_{S_\varepsilon}) \cup Z_\varepsilon\,,
\end{align}
where $\#\hat{X}_\varepsilon  = \# X_\varepsilon$. In what follows we assume for simplicity that $Z_\varepsilon =\emptyset$. Otherwise, one choose $Z_\varepsilon$ such that its contributions to both energy and volume are negligible as $\varepsilon \to 0$.\EEE First of all note
\begin{align}\label{eq:split}
F_\varepsilon(\hat{X}_\varepsilon) = F_\varepsilon(\hat{X}_\varepsilon, B_{S_\varepsilon}) + F_\varepsilon(\hat{X}_\varepsilon, A_{S_\varepsilon+3\varepsilon R}) +  F_\varepsilon(\hat{X}_\varepsilon, B_{S_\varepsilon+3\varepsilon R}^c)\,.
\end{align}
Then, by noting first that $X_\varepsilon = \hat X_\varepsilon$ on  $(B_{S_\varepsilon+3\varepsilon R}^c)_{\varepsilon R}$, we have
\begin{align}\label{eq:outside}
 F_\varepsilon(X_\varepsilon, B_{S_\varepsilon+3\varepsilon R}^c) =  F_\varepsilon(\hat X_\varepsilon, B_{S_\varepsilon+3\varepsilon R}^c)\,.
\end{align}
Furthermore, 
\begin{align}\label{eq:inside}
F_\varepsilon(\hat{X}_\varepsilon, B_{S_\varepsilon}) = F_\varepsilon(X_\varepsilon^{v_1}, B_{S_\varepsilon}) = F_\varepsilon(X_\varepsilon^{v_1})\,. 
\end{align}
Lastly, due to Lemma \ref{lemma:propertiesadmissiblesets}(iii) and the choice of $\delta$, and \eqref{ineq:cardstrip}, we have
\begin{align}\label{ineq:annulus}
F_\varepsilon(\hat{X}_\varepsilon, A_{S_\varepsilon, S_\varepsilon+3\varepsilon R}) \leq C\max_{x \in \mathcal{L}} \#\mathcal{NN}(x) \,  \varepsilon^{n-1} \#(X_\varepsilon \cap A_{S_\varepsilon,S_\varepsilon+3\varepsilon R}) \leq \eta/2\,.
\end{align}
Comparing this to the energy of $X_\varepsilon$, also noting that $S_\delta \geq \mathrm{diam}\, W_{v_1} +1$ and, using \eqref{eq:split}--\eqref{ineq:annulus}, we obtain
\begin{align*} 
\begin{split}
F_\varepsilon(X_\varepsilon) &= F_\varepsilon(X_\varepsilon, B_{S_\varepsilon}) + F_\varepsilon(X_\varepsilon, A_{S_\varepsilon+3\varepsilon R}) +  F_\varepsilon(X_\varepsilon, B_{S_\varepsilon+3\varepsilon R}^c) \\&\geq F_\varepsilon(X_\varepsilon, B_{S_\delta})  + F_\varepsilon(X_\varepsilon, B_{S_\varepsilon+3\varepsilon R}^c)  \geq F_\varepsilon(\hat{X}_\varepsilon) -\eta/2 +F_\varepsilon(X_\varepsilon, B_{S_\delta})  - F_\varepsilon( X_\varepsilon^{v_1}) 
\end{split}
\end{align*}
Therefore, using \eqref{ineq:optimizer} and \eqref{ineq:liminfBallXeps}, we obtain
\begin{align*}
\liminf_{\varepsilon \to 0} ( F_\varepsilon(X_\varepsilon)  -  F_\varepsilon(\hat X_\varepsilon)) \geq \eta/2>0\,.
\end{align*}
This yields \eqref{ineq:contradictionStep4} for $\varepsilon>0$ small enough.
\end{step}\\
\begin{step}{5}($v_1 = v$) Assume by contradiction that $v_1 < v$. We repeat Step~2 and Step~3 for $X_\varepsilon \setminus (W_{v_1}(\tau_\varepsilon^1))_1$ to find $\tau_\varepsilon^2$ such that 
\begin{align}
\mu_\varepsilon(\cdot-\tau_\varepsilon^2) \overset{*}{\rightharpoonup} \rho\chi_{V_2} \text{ in the sense of measures,}
\end{align}
where $|V_2|=v_2 >0$. Note that we can assume $|\tau_\varepsilon^1 - \tau_\varepsilon^2| \to +\infty$ since this would otherwise contradict $\mu_\varepsilon(\cdot-\tau_\varepsilon^1) \overset{*}{\rightharpoonup} \rho \chi_V$. By Step~4, we observe that $V_2=W_{v_2}$. Note that $v \mapsto F_{\mathrm{hom}}(W_v)$ is strictly concave in $v$. In fact there holds
\begin{align}\label{eq:homogeneity}
F_{\mathrm{hom}}(W_v) = v^{\frac{n-1}{n}}
F_{\mathrm{hom}}(W_1)\,.
\end{align}
Therefore, given $v_1,v_2 >0$, for $v=\lambda v_1 + (1-\lambda)v_2$ for $\lambda \in (0,1)$, using \eqref{eq:homogeneity} and the strict concavity of the function $v\mapsto v^{\frac{n-1}{n}}$, we have
\begin{align*}
F_{\mathrm{hom}}(W_v)&= F_{\mathrm{hom}}(W_{ \lambda v_1 +(1-\lambda) v_2}) = (\lambda v_1 +(1-\lambda)v_2)^{\frac{n-1}{n}}
F_{\mathrm{hom}}(W_1) \\&> \left(\lambda v_1^{\frac{n-1}{n}} +(1-\lambda) v_2^{\frac{n-1}{n}}\right)F_{\mathrm{hom}}(W_1) = \lambda F_{\mathrm{hom}}(W_{v_1}) +(1-\lambda)F_{\mathrm{hom}}(W_{v_2})\,.
\end{align*}
Since $F_{\mathrm{hom}}(W_0)=0$, this in  particular implies that $v\mapsto F_{\mathrm{hom}}(W_v)$  is strictly subbaditive, i.e.,
\begin{align*}
F_{\mathrm{hom}}(W_{v_1+v_2}) < F_{\mathrm{hom}}(W_{v_1})+ F_{\mathrm{hom}}(W_{v_2})
\end{align*}
for all $v_1,v_2 >0$. Set
\begin{align*}
\eta:=F_{\mathrm{hom}}(W_{v_1})+ F_{\mathrm{hom}}(W_{v_2})-F_{\mathrm{hom}}(W_{v_1+v_2})>0\,.
\end{align*}
Due to Step~3 applied with $\delta = \delta_\eta:=(3C \max_{x \in \mathcal{L}} \#\mathcal{NN}(x) )^{-1}\eta $ and Step~4 we have that 
\begin{align*}
\mu_\varepsilon(\cdot-\tau_\varepsilon^1) \overset{*}{\rightharpoonup} \rho \chi_{V_1} \quad \text{and} \quad \mu_\varepsilon(\cdot-\tau^2_\varepsilon) \overset{*}{\rightharpoonup} \rho \chi_{V_2} 
\end{align*}
and 
\begin{align*}
\#(X_\varepsilon \cap A_{S_\varepsilon,S_\varepsilon+3\varepsilon R}(\tau_\varepsilon^1)) \leq  \delta_\eta\varepsilon^{1-n}  \quad \text{and} \quad \#(X_\varepsilon \cap A_{\tilde{S}_\varepsilon,\tilde{S}_\varepsilon+3\varepsilon R}(\tau_\varepsilon^2)) \leq  \delta_\eta\varepsilon^{1-n} \,,
\end{align*}
where $\tilde{S}_\varepsilon$ is associated with $\tau_\varepsilon^2$ by following the same procedure to $X_\varepsilon \setminus B_{S_\varepsilon}(\tau_\varepsilon^1)$ as in Step~3.
We can therefore define
\begin{align*}
\tilde{X}_\varepsilon = X^{v_1+v_2}_\varepsilon \cup (X_\varepsilon \setminus (B_{S_\varepsilon}(\tau_\varepsilon^1)\cup B_{\tilde{S}_\varepsilon}(\tau_\varepsilon^2)))\,,
\end{align*}
and without loss of generality (see Step~4) we can directly assume $\#\tilde{X}_\varepsilon = \# X_\varepsilon$. 
Now the argument follows very much in the spirit of Step~4. We first observe that
\begin{align}\label{eq:split2}
\begin{split}
F_\varepsilon(\tilde{X}_\varepsilon) &= F_\varepsilon(\tilde{X}_\varepsilon, B_{S_\varepsilon}(\tau_\varepsilon^1)) + F_\varepsilon(\tilde{X}_\varepsilon, B_{\tilde{S}_\varepsilon}(\tau^2_\varepsilon)) + F_\varepsilon(\tilde{X}_\varepsilon, A_{S_\varepsilon+3\varepsilon R}(\tau_\varepsilon^1)) \\ &\quad+ F_\varepsilon(\tilde{X}_\varepsilon, A_{S_\varepsilon+3\varepsilon R}(\tau_\varepsilon^2)) +  F_\varepsilon(\tilde{X}_\varepsilon, (B_{S_\varepsilon+3\varepsilon R}(\tau_\varepsilon^1) \cup  B_{\tilde{S}_\varepsilon+3\varepsilon R}(\tau_\varepsilon^2))^c)\,.
\end{split}
\end{align}
Then, by noting first that $X_\varepsilon = \tilde X_\varepsilon$ on  $((B_{S_\varepsilon+3\varepsilon R}(\tau_\varepsilon^1) \cup  B_{S^2_\varepsilon+3\varepsilon R}(\tau_\varepsilon^2))^c)_{\varepsilon R}$, we have
\begin{align}\label{eq:outside2}
 F_\varepsilon(X_\varepsilon, ((B_{S_\varepsilon+3\varepsilon R}(\tau_\varepsilon^1) \cup  B_{S^2_\varepsilon+3\varepsilon R}(\tau_\varepsilon^2))^c)_{\varepsilon R}) =  F_\varepsilon(\tilde X_\varepsilon, ((B_{S_\varepsilon+3\varepsilon R}(\tau_\varepsilon^1) \cup  B_{S^2_\varepsilon+3\varepsilon R}(\tau_\varepsilon^2))^c)_{\varepsilon R})\,.
\end{align}
Furthermore, 
\begin{align}\label{eq:inside2}
&F_\varepsilon(\tilde{X}_\varepsilon, B_{S_\varepsilon}(\tau_\varepsilon^1))= F_\varepsilon(X_\varepsilon^{v_1}) \quad \text{and}\quad F_\varepsilon(\tilde{X}_\varepsilon, B_{\tilde{S}_\varepsilon}(\tau_\varepsilon^2))  = F_\varepsilon(X_\varepsilon^{v_2}) \,. 
\end{align}
Lastly, due to Lemma \ref{lemma:propertiesadmissiblesets}(iii) and the choice of $\delta$, we have
\begin{align}\label{ineq:annulus2}
F_\varepsilon(\tilde{X}_\varepsilon, A_{S_\varepsilon, S_\varepsilon+3\varepsilon R}(\tau_\varepsilon^1)) \leq C\max_{x \in \mathcal{L}} \#\mathcal{NN}(x) \, \varepsilon^{n-1} \#(X_\varepsilon \cap A_{S_\varepsilon,S_\varepsilon+3\varepsilon R}(\tau_\varepsilon^1)) \leq \eta/3
\end{align}
and
\begin{align*}
F_\varepsilon(\tilde{X}_\varepsilon, A_{\tilde{S}_\varepsilon, \tilde{S}_\varepsilon+3\varepsilon R}(\tau_\varepsilon^2)) \leq C\max_{x \in \mathcal{L}} \#\mathcal{NN}(x) \, \varepsilon^{n-1} \#(X_\varepsilon \cap A_{\tilde{S}_\varepsilon,\tilde{S}_\varepsilon+3\varepsilon R}(\tau_\varepsilon^2)) \leq \eta/3\,.
\end{align*}
Comparing this to the energy of $X_\varepsilon$ we obtain, using \eqref{eq:split2}--\eqref{ineq:annulus2}, as in Step~5,
\begin{align*}
\liminf_{\varepsilon\to 0} (F_\varepsilon(X_\varepsilon)-F_\varepsilon(\tilde{X}_\varepsilon))\geq \eta/3>0\,.
\end{align*}
This is a contradiction and therefore $v_1=v$.
\end{step}
Setting $\tau_\varepsilon :=\tau_\varepsilon^1$ this concludes the proof.
\end{proof}

\begin{remark} We want to observe that Lemma \ref{lemma:concentration} can be extended to the setting of \cite{AliCicRuf} in which the functional $F_\varepsilon$ also accounts for long-range interactions.  In order to adapt the proof to the general case, the annulus $A_{r_\varepsilon,r_\varepsilon+3R \varepsilon}$ must be replaced by $A_{r_\varepsilon,r_\varepsilon+s_\varepsilon}$ where  $s_\varepsilon=k_\varepsilon \varepsilon$ (here $k_\varepsilon$ is such that $k_\varepsilon \to +\infty$ and $k_\varepsilon\varepsilon \to 0$). This choice ensures that  $\hat X_\varepsilon \cap B_{r_\varepsilon}$ and $\hat X_\varepsilon \setminus  B_{r_\varepsilon+s_\varepsilon}$ (resp. $ X_\varepsilon \cap B_{r_\varepsilon}$ and $ X_\varepsilon \setminus  B_{r_\varepsilon+s_\varepsilon}$) are sufficiently distant such that the energy contribution that accounts for the interactions crossing the annulus are negligible as $\varepsilon \to 0$.
\end{remark}

\section*{Acknowledgements} 
This work was supported by the DFG project FR 4083/1-1 and by the Deutsche Forschungsgemeinschaft (DFG, German Research Foundation) under Germany's Excellence Strategy EXC 2044 -390685587, Mathematics M\"unster: Dynamics--Geometry--Structure. The work of M. Cicalese was supported by the DFG Collaborative Research Center TRR 109, ''Discretization in Geometry and Dynamics''. The first and second author gratefully acknowledge the GNAMPA visiting professor program 2022 and the CIRM research in pairs program, Trento 2020, during which parts of the project have been carried out.

\end{document}